\numberwithin{equation}{section}
\newtheorem{Theorem}{Theorem}[section]
\newtheorem*{Theorem*}{Theorem}
\newtheorem{Corollary}[Theorem]{Corollary}
\newtheorem{Lemma}[Theorem]{Lemma}
\newtheorem{Proposition}[Theorem]{Proposition}
\newtheorem{theoremx}[Theorem]{Theorem}
{ \theoremstyle{definition}

 \newtheorem{Remark}[Theorem]{Remark}
 \newtheorem{observation}[Theorem]{Observation}
}
\def\BN{\mathbb N}
\def\BZ{\mathbb Z}
\def\BQ{\mathbb Q}
\def\BR{\mathbb R}
\def\BP{\mathbb P}
\def\BC{\mathbb C}
\def\calE{\mathcal E}
\def\calI{\mathcal I}
\def\calD{\mathcal D}
\def\calS{\mathcal S}
\def\P{\mathbb P}
\def\s{\sigma}
\def\la{\langle}
\def\ra{\rangle}
\def\ti{\widetilde}
\def\SL{\mathrm{SL}}
\def\PSL{\mathrm{PSL}}
\def\GL{\mathrm{GL}}
\def\ID{I_{\Delta}}
\def\Ind{\mathrm{Ind}}
\def\tq{\tilde{q}}
\def\Res{\mathrm{Res}}
\def\VC{\mathrm{Vol}_\BC}
\def\a{\alpha}
\def\z{\zeta}
\def\+{ + }
\def\m{ - }
\def\be{\begin{equation}}
\def\ee{\end{equation}}
\def\h{\frac12}
\def\sm{\setminus}
\def\Phih{\widehat\Phi}
\def\Psih{\widehat\Psi}
\def\ve{\varepsilon}
\def\tv{\tilde\varepsilon}
\def\Li{\mathrm{Li}}
\def\ps#1{(#1;q)_\infty}
\def\m{ - }
\def\R{\mathbb R}
\def\C{\mathbb C}
\def\Z{\mathbb Z}
\def\Q{\mathbb Q}
\def\ct{\mathrm{c.t.}}
\def\Th{\Theta_q}
\def\t{\tau}
\def\ttau{\tilde\t}
\def\th{\theta}
\def\O{{\rm O}}
\def\J{\mathbf J}
\def\sma#1#2#3#4{\bigl(\smallmatrix#1&#2\\#3&#4\endsmallmatrix\bigr)}
\def\Hab{{\mathrm{Hab}}} 
\def\diag{\operatorname{diag}}
\def\Re{\operatorname{Re}}
\def\Im{\operatorname{Im}}
\newenvironment{psmall}{\left(\begin{smallmatrix}}{\end{smallmatrix}\right)}
\def\Qbar{\overline{\BQ}}
\def\bg{\overline\g}
\def\g{\gamma}
\def\den{\text{den}}
\def\i{^{-1}}
\def\bb{\mathsf{b}}
\def\bJ{\mathbf{J}}
\def\E{\mathcal E}
\def\wh{\widehat}
\def\Tsim{\;\stackrel \cdot =\;}
\def\thin{\hskip1pt}
\begin{document}
\allowdisplaybreaks

\newcommand{\arXivNumber}{2304.09377}

\renewcommand{\PaperNumber}{082}

\FirstPageHeading

\ShortArticleName{Knots and Their Related $q$-Series}

\ArticleName{Knots and Their Related $\boldsymbol{q}$-Series}

\Author{Stavros GAROUFALIDIS~$^{\rm a}$ and Don ZAGIER~$^{\rm bc}$}

\AuthorNameForHeading{S.~Garoufalidis and D.~Zagier}

\Address{$^{\rm a)}$~International Center for Mathematics, Department of Mathematics,\\
\hphantom{$^{\rm a)}$}~Southern University of Science and Technology, Shenzhen, P.R.~China}
\EmailD{\href{mailto:stavros@mpim-bonn.mpg.de}{stavros@mpim-bonn.mpg.de}}
\URLaddressD{\url{http://people.mpim-bonn.mpg.de/stavros}}

\Address{$^{\rm b)}$~Max Planck Institute for Mathematics, Bonn, Germany}
\EmailD{\href{mailto:dbz@mpim-bonn.mpg.de}{dbz@mpim-bonn.mpg.de}}
\URLaddressD{\url{http://people.mpim-bonn.mpg.de/zagier}}

\Address{$^{\rm c)}$~International Centre for Theoretical Physics, Trieste, Italy}

\ArticleDates{Received April 25, 2023, in final form October 17, 2023; Published online November 01, 2023}

\Abstract{We discuss a matrix of periodic holomorphic functions in the upper and lower half-plane which can be obtained from a factorization of an Andersen--Kashaev state integral of a knot complement with remarkable analytic and asymptotic properties that defines a~${\rm PSL}_2({\mathbb Z})$-cocycle on the space of matrix-valued piecewise analytic functions on the real numbers. We identify the corresponding cocycle with the one coming from the Kashaev invariant of a knot (and its matrix-valued extension) via the refined quantum modularity conjecture of~[arXiv:2111.06645] and also relate the matrix-valued invariant with the 3D-index of Dimofte--Gaiotto--Gukov. The cocycle also has an analytic extendability property that leads to the notion of a matrix-valued holomorphic quantum modular form. This is a~tale of several independent discoveries, both empirical and theoretical, all illustrated by the three simplest hyperbolic knots.}

\Keywords{$q$-series; Nahm sums; knots; Jones polynomial; Kashaev invariant; volume con\-jec\-ture; hyperbolic 3-manifolds; quantum topology; quantum modular forms; holomorphic quantum modular forms; state integrals; 3D-index; quantum dilogarithm; asymptotics; Chern--Simons theory}

\Classification{57N10; 57K16; 57K14; 57K10}

\section{Introduction}

In this paper, which is a companion of~\cite{GZ:kashaev}, we want to tell a story
about $q$-series and quantum invariants of knots that seems to us very interesting.
The story started 11 years ago with the challenge to compute the asymptotic expansion
at $q \to 1$ of a $q$-hypergeometric series that appeared in the evaluation of a
tetrahedron quantum spin network. As it turned out, when $q={\rm e}^{2\pi {\rm i} \tau}$ with
$\tau$ tending to zero on the positive imaginary axis, the asymptotics were
oscillatory (with approximate oscillation $0.32306$), and after some experimentation,
it was found that the oscillation was given by the volume of the simplest hyperbolic
(figure eight) knot, divided by~$2 \pi$. The appearance of the $4_1$ knot was a bit
strange, since this knot has little to do with the tetrahedral spin network (or its
complement) in Euclidean or hyperbolic 3-dimensional space. This strange
coincidence persisted further, where it was found by a numerical computation that
the first and the second terms in the asymptotic expansion were, after some minor
normalization, rational numbers with numerator 11 and 697, respectively. A search
in our databases revealed that the number 697 appears as the second coefficient
in the asymptotic expansion of the $4_1$ knot, whereas the number 11 appears as the
first coefficient. This was surely not an accident! Using numerical methods, we
were able to match the asymptotics of the Kashaev invariant of the $4_1$ knot to the
radial asymptotics of the above $q$-series to over 100 terms.

So, our $q$-series was certainly attached to an invariant of the $4_1$-knot. A
systematic collection of such knot invariants (indexed by a pair of integers) was
given by the 3D-index of Dimofte--Gaiotto--Gukov~\cite{DGG2, DGG1}, and in fact,
our $q$-series could be re-written as a $q$-hypergeometric sum~$G_0(q)$ related to
the 3D-index, and nearly, but not quite, matched to the so-called total 3D-index.
An illegitimate (i.e., formal, but divergent) computation of the total 3D-index
suggested that the latter should equal to $G_0(q)^2$, but a computation showed that
it did not agree. Further attempts to identify the quotient of the total 3D-index
by $G_0(q)$ did not produce any results.

The next source of $q$-series attached to knots was the state-integral of
Andersen--Kashaev~\cite{AK}. Although the latter is an analytic function of $\tau$
in the cut plane $\BC'=\BC\sm(-\infty,0]$,
it was well known in the physics literature (see~\cite{Beem})
that it should factorize into a finite sum of products of $q$-series times $\tq$-series,
where $q={\rm e}^{2 \pi {\rm i} \tau}$ and $\tq={\rm e}^{-2\pi {\rm i} /\tau}$. In fact, Kashaev and the first
author exactly did so for the state-integral of the $4_1$ knot (and for one dimensional
state-integrals in general) and found out a second $q$-series
$G_1(q)$~\cite{GK:qseries}. What is more, the total 3D-index of the $4_1$ knot
experimentally was checked to be the product $G_0(q) G_1(q)$, a statement that can be
proven rigorously.

We next looked at asymptotics of the vector $(G_0(q),G_1(q))$ of $q$-series of the
$4_1$ knot when $q$ approaches a root of unity ${\rm e}^{2\pi {\rm i} \a}$ (for a rational number
$\a$), and without a surprise this time, we found the pair of asymptotic series
$\wh\Phi^{(\s_1)}_\a(2\pi {\rm i}\tau)$ and $\wh\Phi^{(\s_2)}_\a(2\pi {\rm i}\tau)$ (corresponding
to the geometric representation of the $4_1$ knot and its complex conjugate) that
appear in a refinement of the quantum modularity conjecture~\cite{GZ:kashaev}.
Replacing the~$q$ and $\tq$-series in the state integral when~$q$ is near a root of
unity by their asymptotic expansions produced a bilinear combination of factorially
divergent series which are convergent power series! This phenomenon was illustrated
by a~dramatic drop in the growth rate of the 150-th coefficient of the corresponding
power series.

Having understood the story for the simplest hyperbolic knot, we observed two new
phenomena. One is quadratic relations (which are trivial for the $4_1$ knot)
for the vector of 3 $q$-series (inside and outside the unit disk) for the $5_2$ knot,
and for the vector of $6$ $q$-series for the $(-2,3,7)$-pretzel knot. Another is
the presence of a level, being $2$ for the $(-2,3,7)$-pretzel knot, presumably related
to the fact that its Newton polygon has half-integer slopes.

Returning to the case of the $4_1$ knot, the factorization of its state-integral
suggested that we look at a bilinear $q$ and $\tq$-combination of the vector
$(G_0(q),G_1(q))$
of $q$-series where now $\tq={\rm e}^{2\pi {\rm i} \gamma(\tau)}$ for a fixed element $\gamma$ of
$\SL_2(\BZ)$ (the case of the original state-integral being the one with
$\gamma=\sma 0{-1}1{\hphantom{-}0}$). A priori, this function is analytic only for
$\tau \in \BC\sm\BR$, but a numerical computation revealed that this function
is analytic on a cut plane $\BC_\gamma$. This suggested an extension of the
Andersen--Kashaev state integral that depends on an element $\gamma$ of $\SL_2(\BZ)$,
and even more to an $\SL_2(\BZ)$-version of the Faddeev quantum dilogarithm, which
is studied in current joint work of Kashaev and the authors~\cite{GKZ:modular}.

A closer look at the asymptotics of the vector $(G_0(q),G_1(q))$ as $q$ approaches $1$,
shows that they were given by linear combinations of a pair of asymptotic series
$\wh\Phi^{(\s_1)}(2\pi {\rm i}\tau)$ and $\wh\Phi^{(\s_2)}(2\pi {\rm i}\tau)$.
This suggested that suitable linear combination of the vector $(G_0(q),G_1(q))$
should be simply asymptotic to one of the two $\wh\Phi^{(\s_j)}(2\pi {\rm i}\tau)$ series
above. However, this statement is incorrect.
Instead, the radial asymptotics when $q={\rm e}^{2\pi {\rm i} \tau}$ and $\tau$ tends to zero in a
fixed ray $\arg(\tau)=\th_0$ depend on the ray, but different rays detect asymptotic
expansions of the form ${\rm e}^{-2\pi {\rm i} m/\tau}\wh\Phi^{(\s_j)}(2\pi {\rm i}\tau)$ for $m$ a
nonnegative integer. When $\arg(\tau)=\pi/2$, these exponentially small corrections
cannot be numerically observed, however when $\arg(\tau)$ is near $0$ or $\pi$, one
can indeed see a multiple of these series ${\rm e}^{-2\pi {\rm i} m/\tau}\wh\Phi^{(\s_j)}(2\pi {\rm i}\tau)$,
appearing, and what is more, the multiple is an integer number. This phenomenon
is already hinted by the bilinear factorization of the state-integral as a finite sum
of products of $q$-series times $\tq$-series, and was glimpsed in the present work,
and studied more extensively in the work of Gu--Mari\~no and the first
author~\cite{GGM,GGM:peacock}.
This lead to a~matrix \smash[t]{$\sma {G_0^{(m)}(q)}{G_1^{(m)}(q)}{G_0^{(m+1)}(q)}{G_1^{(m+1)}(q)}$}
whose entries are descendant $q$-series indexed by the integers with
$G_0^{(0)}(q) = G_0(q)$ and $G_1^{(1)}(q)=G_1(q)$.

The matrix of descendant $q$-series defined for $|q| \neq 1$ lead to a matrix of
asymptotic series, and to a matrix-valued $\PSL_2(\BZ)$-cocycle whose value at
$\sma 0{-1}1{\hphantom{-}0}$ is given by a matrix of descendant Andersen--Kashaev state-integrals
and whose value at $\gamma \in \SL_2(\BZ)$ is given by the matrix of descendant
state-integral invariants of~\cite{GKZ:modular}.

The two matrix-valued cocycles, one from~\cite{GZ:kashaev} and the other
one from the current paper agree at the rational numbers. This follows from
a second factorization property of the state-integrals at rational
points~\cite{GK:evaluation}. This leads to the notion of a
\emph{holomorphic quantum modular form}, a generalization of a mock modular form,
whose realization as periodic functions at rational numbers was the focus
of~\cite{GZ:kashaev} and whose realization as periodic holomorphic
functions in $\BC\sm\BR$ was the focus of our paper.

In this paper, we will have a number of statements called ``Observations'', all of which
were first observed empirically, but of which some are now proved and others still
conjectural. We will indicate this individually in each case.

A preliminary draft of this paper was already written in 2012 but then not
published because we kept finding new results which made the older versions
obsolete. In the present paper, the relation to the perturbative series
and functions on roots of unity treated in~\cite{GZ:kashaev} have finally become
clear. Related aspects of this work appeared in~\cite{DG,DG2,Ga:arbeit,GK:qseries,Gukov:resurgence,
Gukov:BPS}. Modular linear $q$-difference equations
were introduced in~\cite{GW:modular}. An extension of the matrix-valued $q$-series
to a~matrix of one additional row and column that sees the trivial
$\PSL_2(\BC)$-representation was given in~\cite{GGMW:trivial}. A detailed study of
the asymptotics of the full 3D-index (as opposed to its total version discussed
here) and of the related Turaev--Viro invariant was given in~\cite{GW:asy3D}.
A detailed study of the $6 \times 6$ matrix of $q$-series associated to the
$(-2,3,7)$-pretzel knot is given by Ni~An and Yunsheng~Li in~\cite{237}.

Finally, we mention that this story of quantum knot invariants (i.e., 3-mani\-folds
with torus boundary) extends to the case of the Witten--Reshetikhin--Turaev
of closed hyperbolic 3-mani\-folds, as confirmed by Campbell Wheeler in his Ph.D.~Thesis~\cite{Wheeler:thesis,Wheeler:41}.

\section[How the q-series arise]{How the $\boldsymbol{q}$-series arise}
\label{sec.how}

\subsection{The quantum modularity conjecture}

In this section, we tell the rather amusing story of how we purely accidentally
found a $q$-series whose asymptotics near roots of unity agreed with the divergent
perturbative series arising from the volume conjecture and the quantum modularity
conjecture for the $4_1$ knot, and how a~series of further numerical experiments
led to the final picture that is described in this paper.

A knot $K$ has two famous quantum invariants, the (colored) Jones polynomial
$J_N^K(q) \in \BZ\big[q,q\i\big]$ and the Kashaev invariant $\langle K \rangle_N \in \Qbar$
for $N\in\BN$. (Both definitions will be
omitted since they aren't used here and can be found in many
places~\cite{Jones,K95,Tu1}.) Murakami--Murakami~\cite{MM} found that
$\langle K \rangle_N$ is the value of $J^K_N(q)$ at $q=\zeta_N$ and this
is the formula that we will need. For any knot it can in principle
be made explicit. For instance,
\begin{gather*}
\label{J041}
\la 4_1\ra_N  =  \sum_{n=0}^{N-1} \bigl|(\z_N ;\z_N)_n\bigr|^2
\end{gather*}
with $(q;q)_n:=\prod_{j=1}^n(1-q^j)$ being the usual $q$-Pochhammer symbol and
$\z_N={\rm e}^{2\pi {\rm i}/N}$. The Kashaev invariant can be extended equivariantly
to a function $\J$ on complex roots of unity. Moreover,
it is known by the work of Murakami and Murakami~\cite{MM} that the
(similarly defined) invariant $\J^K(-1/N)$
for any knot~$K$ is equal to the knot invariant $\la K \ra_N$ defined by
Kashaev~\cite{K95}. The famous volume conjecture of Kashaev
states that for any hyperbolic knot~$K$ the logarithm of
$\la K \ra_N$ is asymptotically equal to $CN$ as
$N$ tends to infinity, where~$C$ equals the (complexified) hyperbolic
volume of the knot divided by~$2\pi {\rm i}$. There are
very few cases for which the volume conjecture has been rigorously proved,
but for the~$4_1$ knot it is quite easy using
the Euler--Maclaurin formula and standard asymptotic techniques, because all
of the terms in~\eqref{J041} are positive,
and one finds the much more precise formula
\begin{gather}
\label{eq.K41}
\J^{4_1}\biggl(-\frac{1}{N}\biggr) \sim N^{3/2}
\Phih\biggl(\frac{2\pi {\rm i}}{N}\biggr)
\end{gather}
with $\Phih(h)$ defined by
\begin{gather*}
\Phih(h)= {\rm e}^{{\rm i}V/h} \Phi(h) ,
\end{gather*}
where $V$ is the hyperbolic volume of the knot
\begin{gather}
\label{eq.V41}
V =  \operatorname{Vol} \bigl(S^3\sm 4_1\bigr)
 =  2 \Im\big({\rm Li}_2\big ({\rm e}^{\pi {\rm i}/3}\big)\big)  =  2.0298829\dots,
\end{gather}
and where $\Phi(h)$ is the formal power series with algebraic coefficients
(which up to a common factor all lie in the trace field $\BQ\big(\sqrt{-3}\big)$
of the $4_1$ knot) having the form
\begin{gather}
\label{DefPhi}
\Phi(h) = \sum_{j=0}^\infty A_j h^j , \qquad
A_j =  \frac1{\sqrt[4]3} \biggl(\frac{1}{72\sqrt{-3}}\biggr)^j \frac{a_j}{j!}
\end{gather}
with $a_j\in\BQ$, the first values being given by
\begin{center}
\def\arraystretch{1.3}
\begin{tabular}{|c|c|c|c|c|c|c|c|c|}\hline
$j$ & $0$ & $1$ & $2$ & $3$ & $4$ & $5$ & $6$ & $7$
\\ \hline
$a_j$ & $1$ & $11$ & $697$ & $\frac{724351}{5}$
& $\frac{278392949}{5}$
& $\frac{244284791741}{7}$
& $\frac{1140363907117019}{35}$
& $\frac{212114205337147471}{5}$
\\ \hline
\end{tabular}
\end{center}
A proof of~\eqref{eq.K41} is given in~\cite{BD} and in~\cite{GZ:kashaev}.
A weaker asymptotic formula with $\Phi(h)$ replaced by its constant
term~$a_0$ was proved by Andersen and Hansen~\cite{AH}.

\subsection[A q-series G\_0(q)]{A $\boldsymbol{q}$-series $\boldsymbol{G_0(q)}$}

The surprising discovery that we made, completely by accident, is that there
is a close connection between the asymptotic expression occurring here and
the radial asymptotics of the function in the unit disk defined by
\begin{gather}
\label{Defg}
G_0(q)  =  (q;q)_\infty  \sum_{n=0}^\infty (-1)^n \frac{q^{n(3n+1)/2}}{(q;q)_n^3}
  =  1-q-2 q^2-2 q^3-2 q^4+q^6+\cdots .
\end{gather}
The infinite sum in~\eqref{Defg} occurred in the work of the first author
on the stability of the coefficients of the evaluation of the regular
quantum spin network~\cite[Section~7]{Ga:arbeit}, and in the course of a~numerical investigation of its asymptotics as~$q\to1$ we discovered
empirically the following:

\begin{observation}
\label{ob.1}
We have
\begin{gather}
\label{MainConj}
G_0\big ({\rm e}^{2 \pi {\rm i} \tau}\big) \sim  \sqrt \tau \bigl(\Phih(2\pi {\rm i}\tau)
- {\rm i} \Phih(-2\pi {\rm i}\tau)\bigr)
\end{gather}
to all orders in~$\tau$ as $\tau$ tends to~$0$ along any ray in the interior of the
upper half-plane.
\end{observation}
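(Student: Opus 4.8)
The plan is to prove the asymptotic expansion \eqref{MainConj} by combining an exact integral representation (or exact resummation) of $G_0(q)$ with the saddle-point method and the known asymptotics of the Kashaev invariant via \eqref{eq.K41}. The key observation is that $G_0(q)$ is built out of $q$-Pochhammer symbols, and as $\tau\to0$ in the upper half-plane, the quantum dilogarithm asymptotics of $\log(q;q)_n$ and $\log(q;q)_\infty$ are governed by dilogarithm values, exactly the same data that produces $\Phih$. The right-hand side of \eqref{MainConj} involves $\Phih$ evaluated at $+2\pi{\rm i}\tau$ and at $-2\pi{\rm i}\tau$; the latter lives naturally as a lower-half-plane object, suggesting that the correct strategy is a factorization / state-integral interpretation where the two half-planes appear symmetrically.

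Concretely, first I would write $G_0\big({\rm e}^{2\pi{\rm i}\tau}\big)$ as a contour integral using the Faddeev quantum dilogarithm $\Phi_\bb$ (with $\bb^2 = \tau$, up to normalization), exploiting the fact that the Andersen--Kashaev state integral of the $4_1$ knot factorizes as a bilinear combination of $q$-series and $\tq$-series in which $G_0(q)$ is one of the factors. This is precisely the factorization recalled in the introduction (following \cite{GK:qseries}, \cite{Beem}): the state integral equals a finite sum of products $G_i(q)\widetilde{G}_j(\tq)$, and for the $4_1$ knot this sum has a simple shape. Taking $\tq = {\rm e}^{-2\pi{\rm i}/\tau}$, as $\tau\to0$ along a ray in the upper half-plane we have $|q|\to1$ from inside while $|\tq|\to0$, so the $\tq$-factors tend to their constant terms and only the $q$-series survive the limit in leading order; but the \emph{full} asymptotic expansion of the state integral is extracted by the saddle-point method, and it is this expansion that is known (by \eqref{eq.K41} and the results of \cite{BD, GZ:kashaev}) to be $N^{3/2}\Phih$-type data, i.e.\ a combination of $\Phih(2\pi{\rm i}\tau)$ and $\Phih(-2\pi{\rm i}\tau)$ coming from the two complex-conjugate geometric saddle points $\s_1,\s_2$.

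Then I would match coefficients. The state integral has two relevant saddle points (the geometric representation of $4_1$ and its complex conjugate), contributing $\Phih(2\pi{\rm i}\tau)$ and $\Phih(-2\pi{\rm i}\tau)$ respectively; the one-loop factors at these saddles produce the constant $\sqrt\tau$ and the constant $-{\rm i}$ in front of the second term. The factorization expresses the state integral bilinearly in $(G_0,G_1)$ and $(\widetilde G_0,\widetilde G_1)$, and since the total 3D-index of $4_1$ is $G_0(q)G_1(q)$ (a provable statement mentioned above), there is enough rigidity to pin down $G_0$ individually. The cleanest route is to verify that $\sqrt\tau\big(\Phih(2\pi{\rm i}\tau)-{\rm i}\Phih(-2\pi{\rm i}\tau)\big)$ satisfies the \emph{same} $q$-difference equation (modulo exponentially small terms) as $G_0\big({\rm e}^{2\pi{\rm i}\tau}\big)$, since the Nahm-type sum \eqref{Defg} manifestly satisfies a linear $q$-difference equation with polynomial coefficients (an annihilator can be computed by creative telescoping), and so does the perturbative series $\Phih$ by the results of \cite{GZ:kashaev}; then a finite check of initial terms forces equality.

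**The main obstacle** I expect is controlling the passage from the saddle-point \emph{asymptotic} expansion of the state integral to the \emph{bilinear factorization} rigorously: one must justify that the error between the state integral and its finite bilinear sum of $q\tq$-series is genuinely beyond-all-orders small along every ray in the open upper half-plane (not just on the imaginary axis), and that the $\tq$-series really do collapse to constants in the asymptotic sense. This is where the Stokes phenomenon enters --- different rays $\arg\tau=\th_0$ see different exponentially small corrections ${\rm e}^{-2\pi{\rm i}m/\tau}\Phih(\pm2\pi{\rm i}\tau)$, exactly the descendant phenomenon described in the introduction --- so the statement ``to all orders in $\tau$ as $\tau$ tends to $0$ along any ray in the interior of the upper half-plane'' must be interpreted as an asymptotic (Poincaré) expansion, with the ray-dependence hidden in exponentially small, hence negligible, terms. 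Making that uniform-in-$\th_0$ estimate precise, via the quantum-dilogarithm bounds of Faddeev--Kashaev and a careful analysis of the saddle contour, is the technical heart of the argument.
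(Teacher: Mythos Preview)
Your proposal has a genuine gap: the state-integral route cannot isolate $G_0$ from $G_1$. The factorization~\eqref{eq.prop1} expresses $Z_{4_1}(\tau)$ as a \emph{single} bilinear combination of $G_0(q)$, $G_1(q)$ with $G_0(\tq)$, $G_1(\tq)$. Letting $\tq\to0$ collapses the $\tq$-factors to their constant terms $1$, but this leaves you with \emph{one} relation, roughly $\tau^{1/2}G_1(q)-\tau^{-1/2}G_0(q)\sim 2{\rm i}\,(q/\tq)^{1/24}\Phih(-2\pi{\rm i}\tau)$, in two unknowns. The saddle-point analysis of the state integral itself sees only the decaying saddle and produces $\Phih(-2\pi{\rm i}\tau)$ alone; indeed, in the paper this asymptotic of $Z_{4_1}$ is \emph{deduced from} Observations~\ref{ob.1} and~\ref{ob.4}, not the other way around. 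Varying the descendant indices does not help either: as $\tq\to0$ every $G_j^{(m')}(\tq)$ tends to $1$, so each descendant state integral yields the same single relation, and the second-order recursion~\eqref{41qdiff} in $m$ needs two initial conditions, which is precisely what you are trying to establish. The assertion that the 3D-index identity supplies ``enough rigidity to pin down $G_0$ individually'' is not substantiated and, as it stands, is circular.

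The paper's argument is entirely different and direct: it produces an integral representation of $G_0$ \emph{by itself} via the ``Meinardus trick.'' Using the second formula in~\eqref{eq.3g} together with the expansion $1/(x;q)_\infty=\sum_{n\ge0}x^n/(q;q)_n$, one gets
\[
(q;q)_\infty\,G_0(q)\;=\;\int_{{\rm i}\ve+\BR/\BZ}\frac{\Theta(\mathbf{e}(u))}{(\mathbf{e}(u);q)_\infty^{2}}\,{\rm d}u,
\]
with $\Theta$ a Jacobi theta series. Applying the modular transformation of $\Theta$ and unfolding turns this into an integral over $\BR$ whose logarithmic integrand has leading term $h^{-1}\bigl(-2\pi^2(u-\tfrac12)^2+2\,\Li_2(\mathbf{e}(u))\bigr)$. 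Stationary phase then locates \emph{two} critical points, at $u=\tfrac12\pm\tfrac13$ (so $\mathbf{e}(u)={\rm e}^{\pm\pi{\rm i}/3}$, the two solutions of the $4_1$ gluing equation), and each contributes one of the terms $\Phih(\pm2\pi{\rm i}\tau)$ in~\eqref{MainConj}. The ingredient you are missing is exactly this stand-alone integral representation of $G_0$; without it there is no mechanism by which both saddles become visible from $G_0$ alone.
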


It was to achieve this simple statement that we included the
factor~$(q;q)_\infty$ in~\eqref{Defg}. The proofs of this observation and
the subsequent ones in this section are sketched in the appendix.
Our next discovery were two further formulas for~$G_0$ that we found empirically.

\begin{observation}
We have
\begin{gather}
\label{eq.3g}
G_0(q)  =  \frac1{(q;q)_\infty}\sum_{n,m=0}^\infty (-1)^{n+m}
\frac{q^{(n+m)(n+m+1)/2}}{(q;q)_n (q;q)_m}
 =  \sum_{n=0}^\infty (-1)^n \frac{q^{n(n+1)/2}}{(q;q)_n^{ 2}}  .
\end{gather}
\end{observation}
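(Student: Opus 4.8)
We have three expressions for $G_0(q)$ that must be shown equal:
\begin{gather*}
\text{(A)}\quad (q;q)_\infty\sum_{n\ge0}(-1)^n\frac{q^{n(3n+1)/2}}{(q;q)_n^3},
\qquad
\text{(B)}\quad \frac{1}{(q;q)_\infty}\sum_{n,m\ge0}(-1)^{n+m}\frac{q^{(n+m)(n+m+1)/2}}{(q;q)_n(q;q)_m},
\\
\text{(C)}\quad \sum_{n\ge0}(-1)^n\frac{q^{n(n+1)/2}}{(q;q)_n^2}.
\end{gather*}
The strategy is to prove (B) $=$ (C) and (A) $=$ (C) separately, each reducible to a classical $q$-series identity. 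The equality (B) $=$ (C) is purely mechanical: in the double sum set $N=n+m$ and sum over $m$ with $n=N-m$, so the inner sum is $\sum_{m=0}^N \frac{1}{(q;q)_m(q;q)_{N-m}}$. By the $q$-binomial theorem this equals $\frac{1}{(q;q)_N}\sum_{m=0}^N\binom{N}{m}_q = \frac{1}{(q;q)_N}\cdot\frac{(-q;q)_N\cdot(\text{something})}{\cdots}$; more precisely $\sum_{m=0}^N \binom{N}{m}_q$ is not a closed product, so instead use the cleaner route: $\sum_{m+n=N}\frac{1}{(q;q)_m(q;q)_n}$ is the coefficient of $x^N$ in $\bigl(\sum_{k\ge0}\frac{x^k}{(q;q)_k}\bigr)^2 = \frac{1}{(x;q)_\infty^2}$, whereas $\frac{1}{(q;q)_N^2}$ would be the coefficient in $\frac{1}{(x;q)_\infty}$ squared only if... — so one must be slightly careful. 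The honest computation: $\frac{1}{(x;q)_\infty^2}=\sum_{N\ge0}\frac{(-1)^?\cdots}{}$; better yet, multiply (B) through by $(q;q)_\infty$ and recognize the double sum via Durfee-square or Cauchy-identity manipulations so that the $(q;q)_\infty$ factor cancels against one factor of $(q;q)_N$ in (C). I expect the bookkeeping here to work out after choosing the right generating-function substitution; this is the routine part.

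**The substantive step** is (A) $=$ (C), i.e.
\[
(q;q)_\infty^2\sum_{n\ge0}(-1)^n\frac{q^{n(3n+1)/2}}{(q;q)_n^3}
\;=\;
(q;q)_\infty\sum_{n\ge0}(-1)^n\frac{q^{n(n+1)/2}}{(q;q)_n^2}.
\]
This is a Bailey-pair / Bailey-chain identity. The right-hand side, divided by $(q;q)_\infty$, is a classical false-theta-type Nahm sum; the left-hand side is its image under one application of the Bailey transform, which inserts an extra $(q;q)_n$ in the denominator, changes the exponent $n(n+1)/2 \mapsto n(3n+1)/2$ (an extra $q^{n^2}$), and multiplies by $(q;q)_\infty$. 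Concretely, I plan to write down the unit Bailey pair $\beta_n = \frac{1}{(q;q)_n}$, $\alpha_n = (-1)^n q^{n(n-1)/2}\frac{1-q^{2n}}{1-q}\cdots$ relative to $a=1$ (or $a=q$), verify that (C)/$(q;q)_\infty$ is $\sum_n \beta_n \cdot(\text{the right }q\text{-shifted monomial})$, apply one step of the Bailey chain to produce a new pair $(\alpha'_n,\beta'_n)$ with $\beta'_n$ acquiring the cube in the denominator, and check that the resulting sum is exactly (A)/$(q;q)_\infty$. Alternatively, and perhaps more cleanly, both identities follow from Watson's or the Rogers--Fine transformation applied to the relevant ${}_2\phi_1$ or ${}_3\phi_2$; I would first check whether a single application of the Rogers--Fine identity converts $\sum(-1)^nq^{n(n+1)/2}/(q;q)_n^2$ into a form with a manifest extra $(q;q)_\infty$ and cube, which would give the shortest proof.

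**The main obstacle** will be pinning down the exact form of the Bailey pair (the normalization $a=1$ versus $a=q$, and the precise power of $q$ and sign in $\alpha_n$) so that the shifted monomial that must be inserted in $\sum_n\beta_n\,(\cdots)$ matches $q^{n(n+1)/2}$ on the (C) side and $q^{n(3n+1)/2}$ on the (A) side simultaneously under one Bailey step — the quadratic exponents $n(n+1)/2$ and $n(3n+1)/2$ differ by $n^2$, which is precisely the signature of one Bailey iteration, so this should close, but getting all the $q$-powers and the compensating $(q;q)_\infty$ factors bookkept correctly is where the real care is needed. As a fallback, since all three series are effectively computable power series in $q$, one can additionally verify agreement of the first, say, $30$ coefficients as a sanity check before committing to the Bailey-chain proof; and since the statement is labelled an Observation in the paper, a proof sketch along these lines (with the routine verifications deferred to the appendix, as the authors do for the other observations) suffices.
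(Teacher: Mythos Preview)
Your (B)${}={}$(C) step has a real gap. Grouping by $N=n+m$ is a dead end, as you yourself discovered: the inner sum $\sum_{m=0}^{N}1/\bigl((q;q)_m(q;q)_{N-m}\bigr)$ has no useful closed form, and the generating-function detour you sketch does not close up either. The correct move is \emph{not} to group but to expand the exponent as
\[
\frac{(n+m)(n+m+1)}{2} \;=\; \frac{n(n+1)}{2} + \frac{m(m+1)}{2} + nm
\]
and perform the $m$-sum with $n$ fixed. Euler's identity $\sum_{m\ge0}(-1)^m q^{m(m+1)/2}x^m/(q;q)_m = (qx;q)_\infty$ with $x=q^n$ collapses the $m$-sum to $(q^{n+1};q)_\infty = (q;q)_\infty/(q;q)_n$, and (B)${}={}$(C) drops out in one line. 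This is not bookkeeping that ``works out'' along the path you were on; it is a specific decoupling that your $N$-grouping destroys.

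For (A)${}={}$(C) your Bailey-chain plan is plausible---the $n^2$ shift in the exponent is indeed the signature of one Bailey step---but the paper's argument (communicated by Zwegers) is more elementary and fully self-contained. The key is the identity
\[
\frac{1}{(q;q)_n^{2}} \;=\; \sum_{r+s=n} \frac{q^{r^2}}{(q;q)_r^{2}\,(q;q)_s},
\]
which is the diagonal case $m=n$ of $\dfrac{1}{(q;q)_m(q;q)_n} = \displaystyle\sum_{\substack{r+s=m\\s+t=n}}\dfrac{q^{rt}}{(q;q)_r(q;q)_s(q;q)_t}$. Insert this into~(C), then sum over~$s$ using Euler once more (now with $x=q^r$): the $s$-sum becomes $(q^{r+1};q)_\infty=(q;q)_\infty/(q;q)_r$, the exponent becomes $r(3r+1)/2$, and the surviving $r$-sum is exactly~(A). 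So the whole observation reduces to two applications of Euler and one Durfee-type expansion---no Bailey pairs, no Rogers--Fine, and no delicate normalization of $a=1$ versus $a=q$ to get right.
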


A proof of the above equation was given by S.~Zwegers (see
Appendix~\ref{sub.ZwegersIdentity}).
These expressions are of interest because, unlike the original series
in~\eqref{Defg} whose origin had no obvious connection with the $4_1$ knot,
these series \emph{are} related to it: the first one, which was shown to us by
T.~Dimofte, is typical of the series occurring in his work with Gaiotto and
Gukov \cite{DGG2,DGG1,Ga:index} on the 3D index of a triangulation, while
the second one is typical of those occurring in the work of Dimofte
and the first author on $q$-series associated to ideal triangulations of
cusped 3-manifolds~\cite{DG}.

Equation~\eqref{MainConj} turns out to be only a part of a bigger story. On
the one hand, the power series $\Phi(h)$ is only a special case at
$\a=0$ of the more general asymptotic series $\Phi_\a(h)$ ($\a\in\BQ$)
occurring in the modularity conjecture for $\J^{4_1}(q)$ made by the second
author in~\cite{Za:QMF} and play a~central role in our prior
paper~\cite{GZ:kashaev}. These asymptotic series appear in
the asymptotics of $G_0(q)$ for $q={\rm e}^{2\pi {\rm i}(\a+\tau)}$ as~$\tau\to 0$ in
a cone in the upper half-plane. This will be discussed in
Section~\ref{sec.rootsofunity} below. On the other hand, the $q$-series~$G_0(q)$
and the asymptotic formula~\eqref{MainConj} are related to the
Dimofte--Gaiotto--Gukov index and to the Hikami--Kashaev state integral.
We explain this next.

\subsection[The index, the state integral and a second q-series G\_1(q)]{The index, the state integral and a second $\boldsymbol{q}$-series $\boldsymbol{G_1(q)}$}

After describing the radial asymptotics of $G_0(q)$ at roots of unity,
our next step was to look for a connection between the power series $G_0(q)$
and the index of $4_1$. The index is an invariant of suitable ideal
triangulation introduced in \cite{DGG2, DGG1}. Necessary and sufficient
conditions for its convergence were established in \cite{Ga:index}
and its topological invariance was proven in \cite{GHRS}, leading in
particular to an invariant $\Ind_K(q)$ for any knot~$K$ \big(in equation (2)
of~\cite{GHRS}, this invariant was denoted by $I^{\mathrm{tot}}_K(q)$\big).
The index is defined as a sum over a lattice of products of the tetrahedron
index function
\[
\ID(m,e) =
\sum_{n=\max\{0,-e\}}^\infty (-1)^n \frac{q^{\h n(n+1) - (n+\h e )m}}{(q;q)_n(q;q)_{n+e}} .
\]
For the $4_1$ knot, the rotated index at $(0,0)$ (abbreviated simply by the
index below) is given by
\[
\Ind_{4_1}(q)  = \sum_{k_1,k_2 \in \BZ} \ID(k_1,k_2)\ID(k_2,k_1)
 =  1-8 q-9 q^2+18 q^3+46 q^4+90 q^5+\cdots .
\]

It seems quite natural to expect a relation between the $\Ind_{4_1}(q)$
and $G_0(q)$. This is encouraged by the illegitimate rewriting of $\Ind_{4_1}(q)$
as a 4-dimensional sum over the integers (which is divergent), but after some
rearrangement it decouples into the product of two two-dimensional sums
each of which is equal to $G_0(q)$. Nonetheless, when we performed experiments
no relation between the series $\Ind_{4_1}(q)$ and $G_0(q)^2$ was observed.

The key to finding the missing relation between $\Ind_{4_1}(q)$ and $G_0(q)$
turned out to involve the Andersen--Kashaev state integral associated to the $4_1$
knot~\cite{AK} and its factorization~\cite{GK:qseries} as a sum of products
of $q$-series and $\tq$-series.

State integrals appear in quantum hyperbolic geometry and in Chern--Simons theory
with complex gauge group pioneered by the work of Kashaev~\cite{AK,KLV},
Dimofte~\cite{Dimofte:complex, Dimofte:perturbative} and many other
researchers~\cite{DGLZ, Hi}. Their building block is the Faddeev quantum
dilogarithm, and a~suitable combinatorial ideal triangulation of a cusped hyperbolic
3-manifold~$M$ and the result is a~holomorphic function which is often
independent of the ideal triangulation, thus a topological invariant.
Below, we will use the state integral of the Andersen--Kashaev invariant
of a hyperbolic knot complement~\cite{AK}. In the normalization that we will
use this invariant is a holomorphic function $Z_M(\tau)$ on the cut plane
$\BC'$, and for the $4_1$ knot is given by (see~\cite[Section~11.4]{AK})
\begin{gather}
\label{Psi41}
Z_{4_1}(\tau)  =  \int_{\BR + {\rm i} \varepsilon} \Phi_{\sqrt{\tau}}(x)^{2}
 {\rm e}^{- \pi {\rm i} x^2} {\rm d}x, \qquad
\tau \in \BC' = \BC \sm (-\infty,0]
\end{gather}
\big(for convenience we write $Z_K$ in place of $Z_{S^3\sm K}$\big),
with small positive $\varepsilon$, where $\Phi_\mathsf{b}(x)$ is Faddeev's quantum
dilogarithm~\cite{Fa}
\begin{gather*}
\Phi_{\mathsf{b}}(x)  =  \frac{\bigl(-q^{1/2} {\rm e}^{2\pi \mathsf{b}x};q\bigr)_\infty}{
 \bigl(-\tq^{1/2} {\rm e}^{2\pi \mathsf{b}^{-1}x};q\bigr)_\infty},
\qquad q={\rm e}^{2 \pi {\rm i} \tau},\qquad \tq={\rm e}^{-2 \pi {\rm i}/\tau},\qquad \tau=\mathsf{b}^2 .
\end{gather*}
As is well known
(see for instance~\cite{Beem, GK:qseries}), the structure of the set of
poles of the quantum dilogarithm permits one to factorize this integral
as a finite sum of a product of functions of~$q$ and $\tq$ as above.
The answer here is given by the following theorem. Let $G_1(q)$ be the
$q$-hypergeometric series defined by
\begin{gather}
\label{eq.41deform}
\frac{(q {\rm e}^{\ve};q)_\infty^2}{(q;q)_\infty^2} 
\sum_{m=0}^\infty (-1)^m \frac{q^{m(m+1)/2} {\rm e}^{(m+1/2)\ve}}{(q {\rm e}^{\ve};q)_m^2}
 =  G_0(q) +\frac{\ve}{2} G_1(q) + O(\ve)^2
\end{gather}
and given explicitly by
\begin{align}
G_1(q) &  = \sum_{m=0}^\infty \frac{(-1)^m q^{m(m+1)/2}}{(q;q)_m^2}
 \biggl(\E_1(q)\+2\sum_{j=1}^m\frac{1+q^j}{1-q^j} \biggr) \nonumber
\\
&  =  1 - 7 q - 14 q^2 - 8 q^3 - 2 q^4 + 30 q^5 + 43 q^6 + 95 q^7 + 109 q^8
+\cdots, \label{eq.G}
\end{align}
where $\E_1(q)  ($``the non-modular Eisenstein series of weight~1''$)$
is the power series
\begin{gather}
\label{eq.ei1}
\E_1(q) = 1 - 4\sum_{n=1}^\infty\frac{q^n}{1-q^n}
 = 1 - 4\sum_{n=1}^\infty d(n) q^n,
\qquad d(n) = \text{\rm number of divisors of}~n.
\end{gather}

\begin{Theorem*}[\cite{GK:qseries}]
When $\operatorname{Im}(\tau) >0$, we have
\begin{equation}
\label{eq.prop1}
2{\rm i} (\tq/q)^{1/24} Z_{4_1}(\tau)  =  \tau^{1/2} G_1(q) G_0(\tq)
 - \tau^{-1/2} G_0(q) G_1(\tq) ,
\end{equation}
where $q={\rm e}^{2 \pi {\rm i} \tau}$ and $\tq={\rm e}^{-2 \pi {\rm i}/\tau}$.
\end{Theorem*}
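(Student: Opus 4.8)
The plan is to prove the factorization \eqref{eq.prop1} by the standard residue method for state integrals of Andersen--Kashaev type. First I would recall the product formula for Faddeev's quantum dilogarithm, namely that $\Phi_{\mathsf b}(x)$ is meromorphic with poles at $x = c_{\mathsf b} + {\rm i} m \mathsf b + {\rm i} n \mathsf b^{-1}$ and zeros at the negatives of these points, where $c_{\mathsf b} = \tfrac{\rm i}{2}(\mathsf b + \mathsf b^{-1})$, and that near each pole its residue is an explicit product of a $q$-Pochhammer factor in $q$ and one in $\tq$. Substituting $\Phi_{\sqrt\tau}(x)^2$ into \eqref{Psi41}, the integrand ${\rm e}^{-\pi{\rm i} x^2}\Phi_{\sqrt\tau}(x)^2$ has double poles at the points of the lattice above. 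For $\operatorname{Im}(\tau)>0$ one can push the contour $\BR+{\rm i}\varepsilon$ upward to $+{\rm i}\infty$; the Gaussian factor ${\rm e}^{-\pi{\rm i} x^2}$ together with the known asymptotics of $\Phi_{\mathsf b}$ at infinity guarantees that the shifted contour contributes nothing in the limit, so $Z_{4_1}(\tau)$ equals $-2\pi{\rm i}$ times the sum of residues at the poles with positive imaginary part.

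Next I would organize this sum of residues. Writing each pole as $x = c_{\mathsf b} + {\rm i}(m\mathsf b + n\mathsf b^{-1})$ with $m,n\ge 0$, the residue at a double pole splits, via a logarithmic derivative, into a ``leading'' piece and a ``subleading'' piece involving $\partial_x\log\Phi_{\sqrt\tau}$ evaluated at the pole; the latter produces the $\E_1$ and the $\tfrac{1+q^j}{1-q^j}$ sums visible in \eqref{eq.G}. Because $\mathsf b^2 = \tau$, a factor ${\rm e}^{-\pi{\rm i} x^2}$ evaluated on the lattice separates cleanly into a function of $q^m$ times a function of $\tq^n$ times an overall prefactor that is a power of $\tau$ and a $24$th root of $q$ and $\tq$ — this is the origin of the $(\tq/q)^{1/24}$ and the $\tau^{\pm 1/2}$ in \eqref{eq.prop1}. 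Summing over $m$ with $n$ fixed (and vice versa) and using the deformed-series definition \eqref{eq.41deform} of $G_1$, together with the explicit series \eqref{Defg} for $G_0$, one recognizes the double sum as exactly $\tau^{1/2}G_1(q)G_0(\tq) - \tau^{-1/2}G_0(q)G_1(\tq)$; the two terms come from whether the derivative in the double-pole residue hits the $q$-factor or the $\tq$-factor, which is the mechanism that turns one $G_0$ into a $G_1$ in each product.

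There is a cleaner bookkeeping device I would actually use: rather than taking a second-order residue directly, introduce the deformation parameter $\ve$ as in \eqref{eq.41deform} from the start. That is, replace one copy of $\Phi_{\sqrt\tau}(x)$ by $\Phi_{\sqrt\tau}(x + \delta)$ for a small shift, turning all the double poles into pairs of simple poles; compute the (now elementary) sum of simple residues, which factors manifestly as a product of a deformed $q$-series and a deformed $\tq$-series of the type on the left side of \eqref{eq.41deform}; and then expand to first order in $\delta$ as $\delta\to 0$. The order-zero term must vanish (it is the ``wrong'' factorization $G_0(q)G_0(\tq)$ up to normalization, which cancels by symmetry of the two $\Phi$ factors), and the order-$\delta$ term is precisely $G_1(q)G_0(\tq)$ minus $G_0(q)G_1(\tq)$ against the appropriate powers of $\tau$. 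This also explains structurally why $G_1$, and not some other series, is forced to appear.

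The main obstacle will be the analytic justification of the contour shift and the convergence of the resulting lattice sum of residues. One must control $\Phi_{\sqrt\tau}(x)$ uniformly as $x\to{\rm i}\infty$ in the strip, show that ${\rm e}^{-\pi{\rm i} x^2}$ dominates so the top contour drops out, and check that the double series over $(m,n)$ converges absolutely for $\operatorname{Im}(\tau)>0$ (here the cubic-in-$n$ exponent in \eqref{Defg} versus the quadratic Gaussian is exactly what is needed, which is why the $(q;q)_\infty^{-3}$-type denominators matter). A secondary but genuinely fiddly point is tracking the prefactor exactly: one has to match the $24$th-root-of-unity normalization and the $\pm\tfrac12$ powers of $\tau$ against the chosen normalization of $Z_{4_1}$ and of $\Phi_{\mathsf b}$, including the inversion relation $\Phi_{\mathsf b}(x)\Phi_{\mathsf b}(-x) = {\rm e}^{\pi{\rm i} x^2}\,\zeta_{\mathsf b}^{-1}$ with $\zeta_{\mathsf b} = {\rm e}^{\pi{\rm i}(1 + 2c_{\mathsf b}^2)/6}$, which is what ultimately produces the clean symmetric right-hand side of \eqref{eq.prop1}. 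Since the theorem is attributed to \cite{GK:qseries}, I would at this point simply cite that reference for the detailed verification of these normalizations, the argument above being the conceptual skeleton.
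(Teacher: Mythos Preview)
Your proposal is correct and follows essentially the same route as the paper (and the original argument in~\cite{GK:qseries}): push the contour in~\eqref{Psi41} upward, collect residues at the lattice of poles $x_{m,n}=c_b+{\rm i}mb+{\rm i}nb^{-1}$, and use the quasi-periodicity of $\Phi_b$ together with the decoupling identities~\eqref{eq.phimn}--\eqref{eq.expxmn} to separate each residue into a $q$-factor and a $\tq$-factor. The paper's Appendix~\ref{sub.estate} carries this out for the general one-dimensional integral $\Psi_{A,A+\tilde A}$, of which $Z_{4_1}=\Psi_{1,2}$ is the case $A=\tilde A=1$, and packages the double-pole residue via the local expansion variable $\ve=2\pi bx$ (with $\tv=-2\pi b^{-1}x$); your global shift $\Phi_b(x)\Phi_b(x+\delta)$ is an equivalent regularization, and your observation that the $1/\delta$ contributions from the two nearby simple poles cancel is exactly the mechanism that leaves the finite combination $\tau^{1/2}G_1(q)G_0(\tq)-\tau^{-1/2}G_0(q)G_1(\tq)$.

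One small correction: the exponent in~\eqref{Defg} is quadratic in~$n$, not cubic; convergence of the lattice sum comes simply from $|q|,|\tq|<1$ and the quadratic growth of the exponents of $q$ and~$\tq$ produced by~\eqref{eq.expxmn}, not from any cubic-versus-quadratic competition.
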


The coefficients of~$G_0(q)$ and~$G_1(q)$ can be computed easily using that
\begin{gather}
\label{eq.G01coeff}
G_0(q) = \sum_{m=0}^\infty T_m(q), \qquad
G_1(q) = \sum_{m=0}^\infty R_m(q) T_m(q),
\end{gather}
where $T_m(q)$ and $R_m(q)$ are given by the recursion
\begin{gather*}
T_m(q) = -\frac{q^m}{(1-q^m)^2} T_{m-1}(q) , \qquad
R_m(q) = R_{m-1}(q)\+2 \frac{1+q^m}{1-q^m}
\end{gather*}
with initial conditions $T_0(1)=1$ and $R_0(q)=\calE_1(q)$. For instance,
we find
\begin{align*}
G_1(q)={}&
1-7 q-14 q^2-8 q^3-2 q^4+30 q^5+43 q^6+95 q^7+109 q^8+137 q^9+133 q^{10}
\\ &{}
+118 q^{11}+20 q^{12}-64 q^{13}-232 q^{14}-468 q^{15}-714 q^{16}-1010 q^{17}
-1324 q^{18} \\
&{}-1632 q^{19} -1878 q^{20} + \dots
-207821606967484464484714504354799 q^{1500} + \cdots.
\end{align*}
Quite by accident, when we compared the power series expansions of
$G_0(q)$, $G_1(q)$, and the index, we discovered the following.

\begin{observation}
\label{ob.3}
The three $q$-series $G_0(q)$, $G_1(q)$ and $\Ind_{4_1}(q)$ are related by
\begin{gather}
\label{eq.ind41}
\Ind_{4_1}(q)  =  G_0(q) G_1(q) .
\end{gather}
\end{observation}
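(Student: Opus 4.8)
The plan is to prove the identity \eqref{eq.ind41} by reducing both sides to the same double sum over $\BZ^2$ via legitimate (rather than formal) manipulations of $q$-hypergeometric series. First I would expand $\Ind_{4_1}(q) = \sum_{k_1,k_2\in\BZ}\ID(k_1,k_2)\ID(k_2,k_1)$ using the definition of the tetrahedron index, writing each $\ID$ as a single-index sum over a nonnegative integer $n_i$ with the summation range $n_i \ge \max\{0,-e\}$. The product of two such sums, together with the two outer lattice sums over $k_1, k_2$, gives a sum over four integer indices. The first step is to make the change of variables that turns these four indices plus the constraints into a sum over $\BZ^2$ of a product of two genuinely convergent theta-like/hypergeometric factors — this is the ``illegitimate rewriting'' alluded to in the paragraph before the observation, but done carefully so that the reorganization is a finite rearrangement within absolutely convergent series for $|q|<1$. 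The target is to recognize that after this rearrangement the sum factors as a single sum whose summand is $T_m(q)$ times a factor that, when further summed, produces exactly $R_m(q)$; concretely one wants to land on the right-hand side of \eqref{eq.G01coeff}, $\bigl(\sum_m T_m(q)\bigr)\bigl(\sum_m R_m(q)T_m(q)\bigr)$ in disguise, or equivalently on a Cauchy-product form of $G_0(q)G_1(q)$.

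A cleaner route, which I would pursue in parallel, is to use the state-integral identity already available to us. By the quoted Theorem of~\cite{GK:qseries} (equation~\eqref{eq.prop1}), $Z_{4_1}(\tau)$ factors bilinearly as $\tau^{1/2}G_1(q)G_0(\tq) - \tau^{-1/2}G_0(q)G_1(\tq)$. If one can independently establish an analogous bilinear factorization of (a suitable normalization of) the 3D-index generating function $\Ind_{4_1}$ in terms of the state integral — or directly show that $\Ind_{4_1}(q)$ equals the ``diagonal'' coefficient extracted from $Z_{4_1}$ under $q\to\tq$ degeneration — then matching the two factorizations forces $\Ind_{4_1}(q) = G_0(q)G_1(q)$ on the level of formal $q$-series, since $G_0$ and $G_1$ are $\BZ[[q]]$-linearly independent and the factorization is essentially unique. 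The technical content here is that the tetrahedron index $\ID(m,e)$ is, up to normalization, the $q\to\tq$ ``holomorphic block'' residue of Faddeev's quantum dilogarithm $\Phi_\mathsf{b}$, so that the residue/pole analysis that produces \eqref{eq.prop1} from \eqref{Psi41} also produces the index when one instead takes the product of residues at the relevant poles.

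Either way, the key steps in order are: (1) expand $\Ind_{4_1}(q)$ into a multi-sum over the integer lattice using $\ID$; (2) perform the lattice rearrangement/change of variables to expose a product structure, controlling convergence so every step is legitimate for $|q|<1$; (3) identify the resulting factors with the known closed forms \eqref{eq.G01coeff} for $G_0$ and $G_1$ (equivalently, recognize the Eisenstein-type factor $\E_1(q) + 2\sum_{j=1}^m (1+q^j)/(1-q^j)$ of $G_1$ as arising from a logarithmic derivative of the $\ID$ summand, exactly as $G_1$ itself arises from the $\ve$-derivative in \eqref{eq.41deform}); and (4) conclude the identity of formal power series, which then holds as analytic functions on $|q|<1$.

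The main obstacle I expect is step (2)–(3): the naive four-fold sum for $\Ind_{4_1}$ does \emph{not} literally decouple (that is precisely why $\Ind_{4_1} \neq G_0(q)^2$), so one must find the correct pairing of the four indices — presumably a substitution like $m = n_1+n_2$ together with an internal sum over $n_1$ at fixed $m$ — that turns the internal sum into the divided-difference (logarithmic-derivative) expression defining $R_m(q)$ rather than into a second copy of $T_m(q)$. Pinning down this substitution and verifying that the internal sum collapses to exactly $\E_1(q) + 2\sum_{j\le m}(1+q^j)/(1-q^j)$ — rather than to $\E_1$ plus some other divisor-type correction — is the crux; it is a finite but delicate $q$-series identity, most safely checked by comparing the $\ve$-expansion in \eqref{eq.41deform} with the corresponding $\ve$- (or $(m,e)$-)deformation of the tetrahedron index.
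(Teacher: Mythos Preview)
The paper does not actually contain a proof of this observation: immediately after stating it, the authors write only that ``a proof of equation~\eqref{eq.ind41} was communicated to us by T.~Dimofte and an additional proof follows from the results of~\cite[Section~5.3]{GGM:peacock}.'' So there is nothing in the paper to compare your argument against line by line; the relevant benchmark is whether your proposal would actually produce a proof along either of those two external routes.

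On that score, your proposal is a reasonable plan but not yet a proof, and you say as much yourself. In route~(1) you correctly identify the genuine obstacle: the naive decoupling of the four-fold sum gives $G_0(q)^2$, which is wrong, so one must find a non-obvious change of variables under which the inner sum collapses to the logarithmic-derivative expression $\calE_1(q)+2\sum_{j\le m}(1+q^j)/(1-q^j)$ rather than to~$1$. You do not exhibit this change of variables or verify the resulting identity, and this is exactly where all the content lies; everything before and after it is bookkeeping. In route~(2) you appeal to the state-integral factorization~\eqref{eq.prop1}, but the link you posit between $\Ind_{4_1}$ and a ``diagonal degeneration'' of $Z_{4_1}$ is left entirely conditional (``If one can independently establish\dots''), so this route as written proves nothing either.

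Your second route is closer in spirit to what the cited reference does: the proof in~\cite[Section~5.3]{GGM:peacock} goes through the holomorphic-block interpretation of the tetrahedron index and the bilinear (Wronskian-type) relations among the descendant $q$-series, rather than through a bare $q$-hypergeometric rearrangement. If you want to turn your sketch into an actual proof, that is the direction to push: make precise the statement that $\ID(m,e)$ is a pairing of two holomorphic blocks of $\Phi_\bb$, and then show that summing $\ID(k_1,k_2)\ID(k_2,k_1)$ over the lattice reproduces exactly the bilinear combination $G_0G_1$ that the block factorization of the state integral already hands you. Absent that, the proposal has the right shape but a genuine gap at the one step that matters.
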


A proof of equation~\eqref{eq.ind41} was communicated to us by T. Dimofte
and an additional proof follows from the results of
\cite[Section~5.3]{GGM:peacock}. This observation suggests that the $q$-series $G_0(q)$
and $G_1(q)$ are intimately related. Since we had already discovered a
relationship between the asymptotics of $G_0(q)$ as $q\to1$ and the power
series occurring in~\eqref{eq.K41} (Observation~\ref{ob.1}), it was natural
to make a similar numerical study of the asymptotics of $G_1(q)$ as~$q\to1$.
The result of this experiment, stated in the following observation, was
surprisingly simple.

\begin{observation}
\label{ob.4}
We have
\begin{gather}
\label{eq.Gas}
G_1\big ({\rm e}^{2 \pi {\rm i} \tau}\big)  \sim  \frac{1}{\sqrt{\tau}} \big(\Phih(2\pi {\rm i}\tau)
+ {\rm i} \Phih(-2\pi {\rm i}\tau)\big)
\end{gather}
to all orders in~$\tau$ as $\tau$ tends to~$0$ in a cone in the interior of the
upper half-plane.
\end{observation}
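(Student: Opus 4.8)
The plan is to deduce~\eqref{eq.Gas} from Observation~\ref{ob.1}, from the factorization of the Andersen--Kashaev state integral (the Theorem above, i.e.~\eqref{eq.prop1}), and from the radial asymptotics of $Z_{4_1}(\tau)$ as $\tau\to0$. First I would recast~\eqref{eq.prop1} for this limit. When $\tau\to0$ with $\arg\tau$ in a fixed closed subinterval of $(0,\pi)$, one has $\Im(-1/\tau)=\sin(\arg\tau)/|\tau|\to+\infty$, so $\tq={\rm e}^{-2\pi {\rm i}/\tau}$ tends to~$0$ faster than ${\rm e}^{-\delta/|\tau|}$ for some $\delta>0$, while $G_0(\tq)=1+O(\tq)$ and $G_1(\tq)=1+O(\tq)$ by the expansions in~\eqref{Defg} and~\eqref{eq.G}. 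Since the radial growth of $G_0(q)$ and of $G_1(q)$ on such a cone is at most ${\rm e}^{c/|\tau|}$ for a fixed constant~$c$ (a crude bound obtained by termwise estimation of their defining series), the $\tq$-series in~\eqref{eq.prop1} may be replaced by~$1$ with an error negligible to all orders in~$\tau$, and~\eqref{eq.prop1} becomes
\[
\tau^{1/2}G_1(q)\;-\;\tau^{-1/2}G_0(q)\;\sim\;2{\rm i}\,(\tq/q)^{1/24}\,Z_{4_1}(\tau),\qquad q={\rm e}^{2\pi {\rm i}\tau}.
\]
(The restriction to a cone rather than to every ray, the only difference between the hypotheses of Observations~\ref{ob.1} and~\ref{ob.4}, already enters here: nearer the real axis the $\tq$-corrections cease to be negligible and extra terms ${\rm e}^{-2\pi {\rm i} m/\tau}\wh\Phi^{(\s_j)}(2\pi {\rm i}\tau)$ with $m\neq0$ appear, as discussed in the introduction.)

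The analytic input I would then need is that, as $\tau\to0$ in such a cone,
\[
(\tq/q)^{1/24}\,Z_{4_1}(\tau)\;\sim\;\Phih(-2\pi {\rm i}\tau)
\]
to all orders. This is the perturbative ($\mathsf{b}\to0$, semiclassical) evaluation of the state integral~\eqref{Psi41}: one inserts the $\mathsf{b}\to0$ expansion of Faddeev's quantum dilogarithm $\Phi_{\mathsf{b}}(x)$ into the integrand, rescales $x\mapsto x/\mathsf{b}$ to expose the overall $\mathsf{b}^{-2}$ in the exponent, deforms the contour $\BR+{\rm i}\varepsilon$ to a steepest-descent path through the critical point of the resulting dilogarithm potential that encodes the complex conjugate of the geometric solution of Thurston's gluing equations for $S^3\sm 4_1$, and reads off the Gaussian (one-loop) factor together with the full formal power-series correction; combined with the shift carried by the elementary prefactor $(\tq/q)^{1/24}$, the critical value produces the exponential ${\rm e}^{-V/(2\pi\tau)}$ (with $V$ as in~\eqref{eq.V41}), the one-loop factor reproduces the normalization $1/\sqrt[4]3$ of~\eqref{DefPhi}, and the power series is $\Phi(-2\pi {\rm i}\tau)$, so that the product is exactly $\Phih(-2\pi {\rm i}\tau)$. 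This is standard, along the lines of~\cite{AK}, and is the state-integral counterpart of~\eqref{eq.K41} proved in~\cite{BD,GZ:kashaev}. Granting it, Observation~\ref{ob.1} gives $\tau^{-1/2}G_0(q)\sim\Phih(2\pi {\rm i}\tau)-{\rm i}\,\Phih(-2\pi {\rm i}\tau)$, and substituting this and the input into the relation above yields
\[
\tau^{1/2}G_1(q)\;\sim\;\bigl(\Phih(2\pi {\rm i}\tau)-{\rm i}\,\Phih(-2\pi {\rm i}\tau)\bigr)+2{\rm i}\,\Phih(-2\pi {\rm i}\tau)\;=\;\Phih(2\pi {\rm i}\tau)+{\rm i}\,\Phih(-2\pi {\rm i}\tau),
\]
which divided by $\tau$ is precisely~\eqref{eq.Gas}. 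In fact the full strength of the analytic input is not needed: it suffices to know that $(\tq/q)^{1/24}Z_{4_1}(\tau)$ is asymptotic to \emph{some} scalar multiple of $\Phih(-2\pi {\rm i}\tau)$, with no $\Phih(2\pi {\rm i}\tau)$-component, the scalar then being forced to~$1$ by matching leading terms against Observation~\ref{ob.1}.

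The main obstacle is this analytic input. Although morally known, it requires a genuinely $\tau$-uniform steepest-descent analysis: a $\tau$-uniform deformation of $\BR+{\rm i}\varepsilon$ past the poles of $\Phi_{\mathsf{b}}(x)^{2}$, control of the contour tails, exact evaluation of the one-loop prefactor, and --- the delicate point --- verification that \emph{only} the conjugate geometric saddle contributes on the given cone. This last issue is intrinsic: as $\arg\tau$ varies there are Stokes rays across which the geometric saddle, and (near the real axis) the saddles producing ${\rm e}^{-2\pi {\rm i} m/\tau}\wh\Phi^{(\s_j)}(2\pi {\rm i}\tau)$ for $m\neq0$, switch on --- which is exactly why~\eqref{eq.Gas} is asserted only on a cone.

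An alternative that bypasses the state integral is to prove~\eqref{eq.Gas} directly by the Euler--Maclaurin method used for Observation~\ref{ob.1}, now applied to $G_1(q)=\sum_{m\ge0}R_m(q)\,T_m(q)$, where $T_m(q)=(-1)^m q^{m(m+1)/2}/(q;q)_m^2$ and, by the recursions following~\eqref{eq.G01coeff} together with~\eqref{eq.ei1} and $\frac{1+q^j}{1-q^j}=1+\frac{2q^j}{1-q^j}$, one has $R_m(q)=2m+1-4\sum_{j>m}q^j/(1-q^j)$. Compared with $G_0(q)=\sum_m T_m(q)$, the only new feature is the weight $R_m(q)$, which near each of the two saddle points $m=m_\ast(\tau)\asymp1/\tau$ (one for each of $\Phih(\pm2\pi {\rm i}\tau)$) behaves like a constant multiple of $1/\tau$ --- the tail $\sum_{j>m_\ast}q^j/(1-q^j)$ evaluating to a constant multiple of $\log(1-q^{m_\ast})$, with $q^{m_\ast}$ approaching the relevant shape parameter of $4_1$ --- and this accounts for the change of the power of~$\tau$ from $\tau^{1/2}$ to $\tau^{-1/2}$. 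The obstacle here is to show that the two saddle values of $R_m(q)$, together with the corrections coming from the variation of $R_m(q)$ near each saddle and from the higher Euler--Maclaurin terms, conspire to turn the coefficient vector $(1,-{\rm i})$ of $\bigl(\Phih(2\pi {\rm i}\tau),\Phih(-2\pi {\rm i}\tau)\bigr)$ in Observation~\ref{ob.1} into $(1,+{\rm i})$ in~\eqref{eq.Gas}, with all subleading terms matching.
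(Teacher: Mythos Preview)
Your main route is not the paper's. The paper proves Observation~\ref{ob.4} by the \emph{same} direct method it uses for Observation~\ref{ob.1}: the Meinardus trick (integral representation via the Jacobi theta function, modular inversion, then stationary phase at the two critical points $u=\tfrac12\pm\tfrac13$), applied to the representation~\eqref{eq.Gsimple}, $G_1(q)=\sum_{n\ge0}(-1)^n q^{n(n+1)/2}(6n+1)/(q;q)_n^2$. The extra factor $6n+1$ is linear in~$n$, so compared with the $G_0$ analysis it simply multiplies the integrand by a first-order differential operator in the Jacobi variable; this shifts the weight from $+1/2$ to $-1/2$ and flips the relative sign at the second saddle, giving $(1,+{\rm i})$ in place of $(1,-{\rm i})$ with essentially no new work. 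In the paper's logic the state-integral asymptotic $(\tq/q)^{1/24}Z_{4_1}(\tau)\sim\pm\Phih(-2\pi {\rm i}\tau)$ that you want to use as \emph{input} is in fact presented as a \emph{consequence} of Observations~\ref{ob.1} and~\ref{ob.4} combined with~\eqref{eq.prop1}, so your argument would be circular unless you supply the independent steepest-descent analysis of~\eqref{Psi41} that you yourself flag as the main obstacle. (Your derivation is otherwise sound; the final ``divided by~$\tau$'' should read ``divided by~$\sqrt\tau$''.)

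Your alternative is much closer to the paper's approach and would work, but you have made it harder than necessary by using the representation $G_1=\sum_m R_m(q)T_m(q)$ with the nonlocal weight $R_m(q)=2m+1-4\sum_{j>m}q^j/(1-q^j)$. Switching to~\eqref{eq.Gsimple} replaces $R_m(q)$ by the purely polynomial weight $6m+1$, and then the two saddle contributions and the sign flip you were worried about fall out immediately from the values of $6m_\ast+1$ at $m_\ast\sim(\tfrac12\pm\tfrac13)/\tau$, with no delicate cancellation to track.
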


The right hand side of equation~\eqref{eq.41deform} defines
a sequence of power series (one for every power of $\ve$) the first two of
which are $G_0(q)$ and $G_1(q)/2$. This is analogous to the $\ve$-deformations
of linear differential equations studied for instance by Golyshev and
the second author~\cite{GZ:gamma,Za:hirzebruch}, and also analogous to the
theory of Jacobi forms, where $\ve$ plays the role of a Jacobi variable.
The connection between $\ve$-deformation and factorization of state integrals
is discussed further in Appendix~\ref{sub.estate} below.
One may wonder whether the $q$-series given by the coefficient of $\ve^2$
\big(or~$\ve^k$ for $k \geq 2$\big) in~\eqref{eq.41deform} has radial asymptotics given
by a variation of Observations~\ref{ob.1} and~\ref{ob.4}. A~relation was
recently found by Wheeler~\cite{Wheeler:thesis}.

We discovered empirically the following alternative $q$-series representation
for $G_1$, which is just a slight modification of the second formula for
$G_0$ given in~\eqref{eq.3g}.

\begin{observation}
For $|q|<1$ we have
\begin{gather}
\label{eq.Gsimple}
G_1(q) = \sum_{n=0}^\infty (-1)^n \frac{q^{\frac{n(n+1)}{2}} (6n+1)}{(q;q)_n^2}
 .
\end{gather}
\end{observation}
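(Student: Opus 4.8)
The plan is to reduce~\eqref{eq.Gsimple} to a one-parameter $q$-series identity that follows from two applications of Euler's identity $(z;q)_\infty=\sum_{k\ge0}\frac{(-1)^k q^{k(k-1)/2}}{(q;q)_k}z^k$, and then to recover~\eqref{eq.Gsimple} by differentiating that identity at $x=1$. Fix $q$ with $|q|<1$, write $a_m=\dfrac{(-1)^m q^{m(m+1)/2}}{(q;q)_m^2}$, and introduce the entire function $F(x)=\sum_{m\ge0}a_m x^m$, with $F'$ its derivative. Since the right-hand side of~\eqref{eq.Gsimple} equals $\sum_m a_m(6m+1)=F(1)+6F'(1)$, the claim~\eqref{eq.Gsimple} is precisely the statement that $G_1(q)=F(1)+6F'(1)$.

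First I would rewrite the known closed form~\eqref{eq.G} of $G_1$. Using $\dfrac{1+q^j}{1-q^j}=1+\dfrac{2q^j}{1-q^j}$ together with $\E_1(q)=1-4\sum_{j\ge1}\dfrac{q^j}{1-q^j}$ from~\eqref{eq.ei1}, the bracket in~\eqref{eq.G} equals $1+2m-4\psi_m$, where $\psi_m:=\sum_{j>m}\dfrac{q^j}{1-q^j}$. Hence~\eqref{eq.G} gives
\[
G_1(q)=\sum_{m\ge0}a_m\bigl(1+2m-4\psi_m\bigr)=F(1)+2F'(1)-4\sum_{m\ge0}a_m\psi_m ,
\]
so~\eqref{eq.Gsimple} becomes equivalent to the single identity
\begin{gather}\label{eq.plan.key}
\sum_{m\ge0}a_m\,\psi_m\;=\;-F'(1).
\end{gather}

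The main step is to prove~\eqref{eq.plan.key}, which I would deduce from the one-parameter identity
\begin{gather}\label{eq.plan.gen}
\sum_{m\ge0}a_m\,\frac{\bigl(q^{m+1}x;q\bigr)_\infty}{\bigl(q^{m+1};q\bigr)_\infty}\;=\;F(x)\qquad(x\in\BC) .
\end{gather}
Identity~\eqref{eq.plan.gen} is a one-parameter companion of the second formula for $G_0$ in~\eqref{eq.3g}, to which it reduces at $x=1$ (and it reduces to Euler's identity at $x=0$). To prove~\eqref{eq.plan.gen}, rewrite its left-hand side as $\frac1{(q;q)_\infty}\sum_{m\ge0}\frac{(-1)^m q^{m(m+1)/2}}{(q;q)_m}(q^{m+1}x;q)_\infty$ (using $(q^{m+1};q)_\infty=(q;q)_\infty/(q;q)_m$), expand the last factor by Euler's identity, interchange the (absolutely convergent) double sum, and evaluate the inner sum over $m$ by Euler's identity once more, in the shape $\sum_{m\ge0}\frac{(-1)^m q^{m(m-1)/2}}{(q;q)_m}(q^{k+1})^m=(q^{k+1};q)_\infty=\frac{(q;q)_\infty}{(q;q)_k}$; after collecting the powers of $q$ (using $k(k-1)/2+k=k(k+1)/2$) the right-hand side comes out to be exactly $F(x)$. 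Now differentiate~\eqref{eq.plan.gen} in $x$ and put $x=1$: each summand on the left is $1$ at $x=1$, and differentiating $\log(q^{m+1}x;q)_\infty$ and setting $x=1$ gives $-\psi_m$, so the left-hand side contributes $-\sum_m a_m\psi_m$ while the right-hand side contributes $F'(1)$; this is precisely~\eqref{eq.plan.key}. (Termwise differentiation is legitimate because, for $x$ in a compact set, the factors $(q^{m+1}x;q)_\infty/(q^{m+1};q)_\infty$ and their $x$-derivatives are bounded uniformly in $m$, and $\sum_m|a_m|<\infty$.)

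Substituting~\eqref{eq.plan.key} into $G_1(q)=F(1)+2F'(1)-4\sum_m a_m\psi_m$ gives $G_1(q)=F(1)+2F'(1)+4F'(1)=F(1)+6F'(1)$, which is~\eqref{eq.Gsimple}. The only part with real content is the transformation~\eqref{eq.plan.gen}; everything else is bookkeeping. I expect the genuine obstacle here to be one of discovery rather than of technique — realizing that the generating function $F(x)$ and the identity~\eqref{eq.plan.gen} are the right intermediate objects — since once~\eqref{eq.plan.gen} is written down its proof is essentially two lines.
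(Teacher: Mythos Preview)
Your argument is correct. The reduction of~\eqref{eq.G} to $F(1)+2F'(1)-4\sum_m a_m\psi_m$ is straightforward, and the one-parameter identity~\eqref{eq.plan.gen} is indeed an immediate consequence of two applications of Euler's formula (the interchange of sums is easily justified, and your bounds for termwise differentiation are fine since $|(q;q)_m|$ is bounded above and below by positive constants independent of~$m$). Differentiating~\eqref{eq.plan.gen} at $x=1$ then gives exactly the key relation~\eqref{eq.plan.key}.

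The paper itself does not prove this observation: it records it as an empirical discovery and cites~\cite[Section~5.3]{GGM:peacock} for a proof. Your argument is therefore not a comparison point but an independent, fully self-contained proof, and a rather clean one. It is very much in the spirit of Zwegers' proof of~\eqref{eq.3g} given in Appendix~\ref{sub.ZwegersIdentity}: both proceed by expanding a Pochhammer factor via Euler, swapping sums, and collapsing the inner sum again by Euler. What you add is the realization that the right object to deform is the ratio $(q^{m+1}x;q)_\infty/(q^{m+1};q)_\infty$, whose logarithmic derivative at $x=1$ produces exactly the tail $\psi_m$ appearing in~$\E_1$. This is a nice trick and arguably more transparent than tracking the identity through the descendant machinery of~\cite{GGM:peacock}.
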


This was later proved in~\cite[Section~5.3]{GGM:peacock}.

\subsection[Holomorphic functions in C setminus R]{Holomorphic functions in $\boldsymbol{\BC\sm\BR}$}

The relation of the $q$-series $G_0$ and $G_1$ with the state integral given
in equation~\eqref{eq.prop1} brings out one more aspect to the $q$-series
$G_0$ and $G_1$, namely their extension outside the unit disk~${|q|>1}$.
This happens because on the one hand the state integral satisfies the symmetry
\begin{gather*}
Z_{4_1}(\tau) = Z_{4_1}\big(\tau^{-1}\big), \qquad \tau \in \BC\sm\BR
\end{gather*}
(which in turn follows from the corresponding symmetry of Faddeev's quantum
dilogarithm), and on the other hand the state integral is factorized in terms of
explicit $q$-hypergeometric series, which are guaranteed to be convergent
when $|q| \neq 1$. Indeed, the summand in last part of equation~\eqref{eq.3g}
is invariant under the replacement of $q$ by $q^{-1}$, and hence the formula of the
equation defines an extension of $G_0$ for $|q|>1$ which satisfies the
property $G_0(q)=G_0\big(q^{-1}\big)$. Likewise, equation~\eqref{eq.G}, together
with the convention that $E_1\big(q^{-1}\big)=-E_1(q)$ for $|q|>1$, defines an
extension of $G_1$ which satisfies the property $G_1(q) = -G_1\big(q^{-1}\big)$.
Summarizing, we have
\begin{gather*}
\label{eq.inout}
G_0(q) = G_0\big(q^{-1}\big), \qquad G_1(q) = -G_1\big(q^{-1}\big), \qquad q \in \BC, \qquad
|q| \neq 1
\end{gather*}
and equation~\eqref{eq.prop1} holds for $\tau \in \BC\sm\BR$.

\section[q-series and perturbative series]{$\boldsymbol{q}$-series and perturbative series}
\label{sec.qandh}

In this section, we discuss three further aspects of our pair $(G_0(q),G_1(q))$
of $q$-series. One is that their asymptotic expansions depend on a sector.
This seems to be a property of general $q$-hypergeometric series not observed before,
which is not only theoretically interesting, but also practically so, since
to numerically compute asymptotic expansions, we can choose rays with a~single
dominant asymptotics, making the numerical computation much easier. From that point
of view, the numerical asymptotics when $q \in [0,1)$ tends to $1$ is a very resonant
situation.

A second aspect is that bilinear combinations of factorially divergent series give
convergent power series. These bilinear combinations are motivated by the factorization
of state-integrals, combined by the asymptotic expansions of our $q$-series, and
lead to explicit formulas for the Taylor series expansions of state-integrals at
rational numbers, which subsequently have been proven in~\cite{GK:evaluation}.

The third aspect is that the asymptotic analysis of our $q$-series can be extended
to any complex root of unity. This is hardly a surprise, and relates the asymptotic
expansions of the pair $(G_0(q),G_1(q))$ as $q$ approaches a root of unity to the
asymptotic expansions of the Kashaev invariant in the quantum modularity
conjecture of the second author~\cite{Za:QMF}.

\subsection{Asymptotics of holomorphic functions in sectors}

Since we will be considering functions of $q$ on $|q| \neq 1$ as well as
functions of $\tau \in \BC\sm\BR$ with $q={\rm e}^{2\pi {\rm i} \tau}$,
we will use capital letters for functions $F(q)$ of $q$ with $|q| \neq 1$
and small letters for the corresponding functions $f(\tau):=F\big ({\rm e}^{2\pi {\rm i} \tau}\big)$ of
$\tau \in \BC\sm\BR$. For instance, we have
\begin{gather*}
g_0(\tau) = G_0\big ({\rm e}^{2 \pi {\rm i} \tau}\big), \qquad g_1(\tau) = G_1\big ({\rm e}^{2 \pi {\rm i} \tau}\big),
\qquad \tau \in \BC\sm\BR
\end{gather*}
and Observations~\ref{ob.1} and~\ref{ob.4} can be written in the form
\begin{gather}
\label{ggas}
g_0(\tau)  \sim  \sqrt \tau \big(\Phih(2 \pi {\rm i} \tau)
- {\rm i} \Phih(-2 \pi {\rm i} \tau)\big), \qquad
g_1(\tau)  \sim  \frac{1}{\sqrt{\tau}} \big(\Phih(2 \pi {\rm i} \tau)
+ {\rm i} \Phih(-2 \pi {\rm i} \tau)\big)
\end{gather}
as $\tau \in\BC\sm\BR$ goes to 0 in a cone in the interior of the
upper half-plane.
We emphasize here that we are not only considering limits as $q\to1$
radially, which would correspond to taking $\tau={\rm i}\epsilon$ with a positive
real number $\epsilon$ tending to zero, but are also allowing~$\tau$ to tend
to~0 at an fixed angle. This is important when actually doing the numerical
experiments since often (and also here) the limit when one moves along the
imaginary axis only is hard to recognize because the two terms
in~\eqref{eq.Gas} are both oscillatory and have the same order of magnitude,
so that they interfere with one another, and it is only possible to see the
numerical structure clearly when one allows oneself more freedom.
The two asymptotic series $\Phih(2 \pi {\rm i} \tau)$ and ${\rm i} \Phih(-2 \pi {\rm i} \tau)$
partition the upper half plane into two sectors $S_1\colon \arg(\tau) \in (0,\pi/2]$ and
$S_2\colon\arg(\tau) \in [\pi/2,\pi)$; see Figure~\ref{f.41rates}. In the interior of~$S_1$,
$\Phih(2\pi {\rm i}\tau)$ dominates exponentially, and the reverse happens in $S_2$,
while on the common ray $\arg(\tau)=\pi/2$ both functions have oscillatory growth.

\begin{figure}[!htpb]\centering
\includegraphics[height=0.17\textheight]{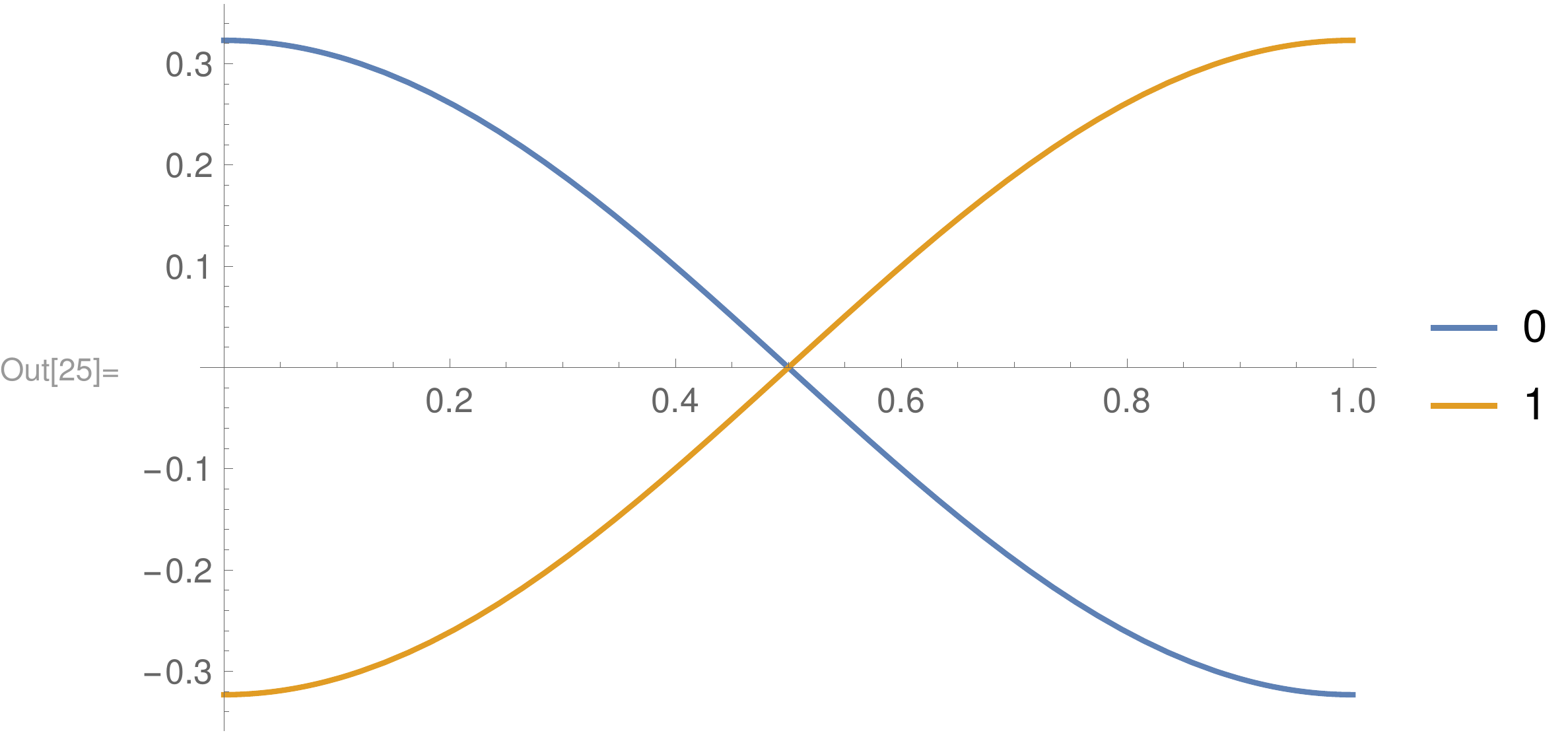}
\caption{A plot of the growth rates $\Re({\rm i}V/(\pm2\pi {\rm i}\t))$ of $\Phih(\pm2\pi {\rm i}\tau)$
 for $ \arg(\t)=\pi \theta$ with $0<\theta<1$ and~$|\t|$ fixed. $0$ means $+$ branch
 and $1$ means $-$ branch.
The branches cross at $0.5$ and partition the interval $(0,1)$ in two sectors.}\label{f.41rates}
\end{figure}

On a fixed ray, the asymptotic statements of equation~\eqref{ggas} involves
combinations of series with different growth rates, and it would appear at first
sight that the coefficient in front of the dominated series in~\eqref{ggas} is
meaningless. However, the refined optimal truncation of~\cite{GZ:kashaev, GZ:optimal}
allows us to make numerical sense of the both divergent series $\Phi(\pm2\pi {\rm i}\t)$
with a relative error that is exponentially rather than merely polynomially
small compared to the leading term, and then we can ``see'' both terms in~\eqref{ggas}.

We can also try to take a linear combination of the two equations in~\eqref{ggas}
to get new holomorphic functions $w^{(\s_1)}(\t)$ and~$w^{(\s_2)}(\t)$ whose
asymptotic behavior near the origin gives each of the individual completed series
$\Phih(\pm2\pi {\rm i}\t)$ separately. Specifically, if we define a holomorphic
vector-valued function
\smash{$w(\t)=\begin{psmall} w^{(\s_1)}(\t) \\ w^{(\s_2)}(\t) \end{psmall}$} by
\begin{gather}
\label{eq.41g2w}
w(\t)  =  \frac12 \begin{pmatrix} 1 & \hphantom{-}1 \\ 1 & - 1 \end{pmatrix}
\begin{pmatrix} \tau^{-1/2} g_0(\t) \\ \tau^{1/2} g_1(\t) \end{pmatrix} ,
\qquad
\begin{pmatrix} \tau^{-1/2} g_0(\t) \\ \tau^{1/2} g_1(\t) \end{pmatrix}
  =  \begin{pmatrix} 1 & \hphantom{-}1 \\ 1 & -1 \end{pmatrix} w(\t) ,
\end{gather}
then equation~\eqref{ggas} might seem to imply the asymptotic statements
\begin{gather}
\label{NoSectors}
w^{(\s_1)}(\t)\sim\Phih(2\pi {\rm i}\tau),\qquad
w^{(\s_2)}(\t)\sim-{\rm i} \Phih(-2\pi {\rm i}\tau)
\end{gather}
to all orders in both quarter-planes~$S_1$ and~$S_2$.
In any case, the passage from $g$ to $w$ has several other nice consequences.
The first is a very simple formula for
the index, namely
\begin{gather*}
\Ind_{4_1}\big ({\rm e}^{2\pi {\rm i}\tau}\big) = w^{(\s_1)}(\tau)^2 - w^{(\s_2)}(\tau)^2
\end{gather*}
(combine equations~\eqref{eq.ind41} and~\eqref{eq.41g2w}), which when combined
with Observation~\ref{ob.6} gives the asymptotics of the 3D index
when $\tau$ tends to zero on the vertical axis.
The second, obtained by combining equation~\eqref{eq.prop1}
with Observation~\ref{ob.3}
as $\tau\to 0$, and using the fact that \hbox{$g_0(\ti\tau) \sim G_0(0)=1$} and
\hbox{$g_1(\ti\tau) \sim G_1(0)=1$}, is the asymptotic formula
\begin{gather*}
-(\tq/q)^{1/24} Z_{4_1}(\tau) \sim  \Phih(-2\pi {\rm i}\tau),
\qquad\tau\to0^+ .
\end{gather*}
In other words, the state integral as $\tau\to 0$ exponentially decays with the
fastest possible rate and with an asymptotic expansion matching to all orders that
of the Kashaev invariant at~${q=1}$. This is a version of the Volume Conjecture for
the state integral which has recently been established for knot complements with
suitable ideal triangulations in~\cite{AGK}.

However, equation~\eqref{NoSectors} is not quite true.
Instead, we find that it is true in a wide neighborhood of the imaginary
axis, but fails when $\t$ approaches~0 from very near the positive or negative real
axis. More precisely, what we find numerically is the following

\begin{observation}
\label{ob.6}
The first asymptotic equation in~\eqref{NoSectors} holds to all orders in~$\t$ as $\t$
tends to~$0$ along a ray with argument between~$0$ and~$\pi-0.11$, but fails when the
argument is larger, while the second equation holds to all orders if~$\t$ tends to~$0$
along a ray with argument between $0.11$ and~$\pi$, but fails for small arguments.
\end{observation}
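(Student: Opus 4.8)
The plan is to reduce to a single ray‑statement via a complex‑conjugation symmetry, then to use the exact factorization of the Andersen--Kashaev state integral to pin down the leading term of $w^{(\s_1)}(\tau)$ together with its first exponentially small correction, and finally to compare the growth rates of the two terms as the ray varies. For the reduction: since $G_0,G_1\in\BZ[[q]]$ one has $\overline{g_j(\tau)}=g_j(-\bar\tau)$, and since the coefficients of $\Phi$ lie in $\BQ(\sqrt{-3})$ with complex conjugation realising the nontrivial automorphism of that field, while $V\in\BR$, one has $\overline{\Phih(h)}=\Phih(-\bar h)$. Keeping track of the principal branch of $\tau^{\pm1/2}$ under $\tau\mapsto-\bar\tau$, these give $\overline{w^{(\s_1)}(\tau)}={\rm i}\,w^{(\s_2)}(-\bar\tau)$ and $\overline{\Phih(2\pi{\rm i}\tau)}=\Phih\bigl(-2\pi{\rm i}(-\bar\tau)\bigr)$. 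As $\tau\mapsto-\bar\tau$ sends the ray $\arg\tau=\theta$ to the ray $\arg=\pi-\theta$, conjugating the first assertion of the Observation yields exactly the second, so it suffices to prove: $w^{(\s_1)}(\tau)\sim\Phih(2\pi{\rm i}\tau)$ to all orders when $\tau\to0$ with $0<\arg\tau<\pi-\delta$, and that this fails when $\pi-\delta<\arg\tau<\pi$, where the sharp threshold is $\delta=\arctan\bigl(V/(2\pi^2)\bigr)=0.10247\ldots$ (the $0.11$ in the statement being a rounding).

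\textbf{Leading term and first correction.} Applying the theorem of~\cite{GK:qseries} (equation~\eqref{eq.prop1}) at $-1/\tau$ in place of $\tau$ gives the companion identity
\begin{gather*}
2\bigl(q/\tq\bigr)^{1/24}Z_{4_1}(-1/\tau)\;=\;\tau^{-1/2}G_0(q)\,G_1(\tq)+\tau^{1/2}G_1(q)\,G_0(\tq),\qquad q={\rm e}^{2\pi{\rm i}\tau},\quad \tq={\rm e}^{-2\pi{\rm i}/\tau}.
\end{gather*}
For $\tau$ in the open upper half plane $|\tq|<1$, so $G_0(\tq)=1-\tq-\cdots$ and $G_1(\tq)=1-7\tq-\cdots$ converge; solving for $w^{(\s_1)}$ by means of~\eqref{eq.41g2w} gives an identity
\begin{gather*}
w^{(\s_1)}(\tau)\;=\;\bigl(q/\tq\bigr)^{1/24}Z_{4_1}(-1/\tau)\;+\;\sum_{m\ge1}\tq^{m}\bigl(\alpha_m\,w^{(\s_1)}(\tau)+\beta_m\,w^{(\s_2)}(\tau)\bigr),\qquad \beta_1=3 .
\end{gather*}
Insert the leading transseries $w^{(\s_1)}\sim\Phih(2\pi{\rm i}\tau)$ and $w^{(\s_2)}\sim-{\rm i}\,\Phih(-2\pi{\rm i}\tau)$, which Observations~\ref{ob.1} and~\ref{ob.4} provide in a cone around $\arg\tau=\pi/2$ (where every $\tq^m$‑term and the normalized state integral $(q/\tq)^{1/24}Z_{4_1}(-1/\tau)$, controlled by the saddle‑point analysis of~\eqref{Psi41}, are subdominant), and continue the resulting transseries outward. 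One gets
\begin{gather*}
w^{(\s_1)}(\tau)\;=\;\Phih(2\pi{\rm i}\tau)\,\bigl(1+O(\tau)\bigr)\;+\;C\,\tq\,\Phih(-2\pi{\rm i}\tau)\,\bigl(1+O(\tau)\bigr)\;+\;(\text{uniformly smaller descendants}),
\end{gather*}
where the coefficient $C$ receives the elementary contribution $\beta_1\cdot(-{\rm i})=-3{\rm i}$ from the $\tq^1$‑term together with a contribution $C_0$ from the transseries of $(q/\tq)^{1/24}Z_{4_1}(-1/\tau)$; the decisive point is that $C\neq0$.

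\textbf{Which ray wins.} Writing $\theta=\arg\tau$, at leading exponential order $\log\bigl|\Phih(2\pi{\rm i}\tau)\bigr|\sim\frac{V\cos\theta}{2\pi|\tau|}$ and $\log\bigl|\tq\,\Phih(-2\pi{\rm i}\tau)\bigr|\sim-\frac{1}{|\tau|}\bigl(2\pi\sin\theta+\frac{V}{2\pi}\cos\theta\bigr)$, so
\begin{gather*}
\log\frac{\bigl|\tq\,\Phih(-2\pi{\rm i}\tau)\bigr|}{\bigl|\Phih(2\pi{\rm i}\tau)\bigr|}\;\sim\;-\frac{1}{|\tau|}\Bigl(2\pi\sin\theta+\frac{V}{\pi}\cos\theta\Bigr),
\end{gather*}
which tends to $-\infty$ exactly when $2\pi\sin\theta+\frac{V}{\pi}\cos\theta>0$, i.e.\ when $\theta<\pi-\arctan\bigl(V/(2\pi^2)\bigr)=\pi-\delta$ (in particular always for $\theta\le\pi/2$). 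In that range the correction is negligible to all orders, and a parallel check shows every other descendant $\tq^m\Phih(\pm2\pi{\rm i}\tau)$, $m\ge1$, is uniformly still more strongly dominated by $\Phih(2\pi{\rm i}\tau)$; hence $w^{(\s_1)}(\tau)/\Phih(2\pi{\rm i}\tau)$ has asymptotic expansion $\Phi(2\pi{\rm i}\tau)$, uniformly on compact sub‑rays — this is the first assertion. For $\pi-\delta<\theta<\pi$ the term $C\,\tq\,\Phih(-2\pi{\rm i}\tau)$ exceeds $\Phih(2\pi{\rm i}\tau)$ in modulus and, since $C\neq0$, cannot be absorbed into the power series of $\Phih(2\pi{\rm i}\tau)$, so the relation fails. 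Together with the conjugation reduction, this is the whole Observation.

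\textbf{The main obstacle} is the inequality $C\neq0$. Its piece $-3{\rm i}$ is elementary ($\beta_1=3$ is the $\tq^1$‑coefficient of $G_1(\tq)$ in the factorization, times $w^{(\s_2)}\sim-{\rm i}\Phih(-2\pi{\rm i}\tau)$), but one must also control the \emph{subleading} transseries of the normalized state integral and the Stokes jump of the $\tq\,\Phih(-2\pi{\rm i}\tau)$‑coefficient incurred on crossing the critical ray $\arg\tau=\pi-\delta$. This is exactly the one‑instanton Stokes data of the pair $(G_0,G_1)$, equivalently of the $2\times2$ matrix of descendant $q$‑series and its $\PSL_2(\BZ)$‑cocycle — one order beyond Observations~\ref{ob.1} and~\ref{ob.4} — and it is supplied either by carrying the Euler--Maclaurin/Poisson analysis of the defining $q$‑sums into the first instanton sector, or by the explicit Stokes matrices of~\cite{GGM, GGM:peacock}; it yields $C\neq0$. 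Granting this, the three steps above are routine.
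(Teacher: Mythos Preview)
The paper does not give a proof of this Observation; as the text makes explicit (``what we find numerically is the following''), it is presented purely as an empirical finding, supported afterwards only by numerical illustrations at specific values of~$\tau$ and by the phase-transition heuristic $\arg(\tau)=\arctan\bigl(V/2\pi^2 m\bigr)$.  So there is no paper proof to compare to.

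Your proposal is therefore an \emph{attempt} to upgrade a numerical observation to a theorem.  Several pieces are correct and nicely done: the complex-conjugation symmetry reducing the two assertions to one is valid (and is not in the paper); the companion identity $2(q/\tq)^{1/24}Z_{4_1}(-1/\tau)=\tau^{-1/2}G_0(q)G_1(\tq)+\tau^{1/2}G_1(q)G_0(\tq)$ is a correct consequence of~\eqref{eq.prop1}; your extraction $\beta_1=3$ matches the paper's numerically observed correction $3\tq\,w^{(\sigma_2)}$; and the growth-rate computation giving the threshold $\delta=\arctan\bigl(V/2\pi^2\bigr)$ is right.

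The genuine gap is the step you label ``controlled by the saddle-point analysis of~\eqref{Psi41}''.  To close the argument you need the asymptotic $(q/\tq)^{1/24}Z_{4_1}(-1/\tau)\sim\Phih(2\pi {\rm i}\tau)$ to all orders, uniformly on every ray $0<\arg\tau<\pi$, \emph{with no} $\tq^{m}\Phih(-2\pi {\rm i}\tau)$ correction at order $m=1$ (so that $C_0=0$ and $C=-3{\rm i}$).  This is exactly a steepest-descent statement for~\eqref{Psi41} in a full half-plane, including control of subdominant saddles; the paper only cites such a result for $\tau\to0^+$ (via~\cite{AGK}), and you have not supplied the extension.  Your fallback---invoking the Stokes matrices of~\cite{GGM,GGM:peacock}---does not rescue the argument as a self-contained proof, since the relevant Stokes data there are themselves partly conjectural beyond the first few cases.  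There is also a mild circularity: to bound $\tq\,w^{(\sigma_2)}(\tau)$ on rays with $0<\arg\tau<0.11$ you need the asymptotic of $w^{(\sigma_2)}$ precisely in the region where (by your own symmetry) it deviates from its naive expansion; the ``continue the resulting transseries outward'' step needs a genuine induction with uniform error control, which you have sketched but not carried out.
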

As an illustration, for $\t=\frac{-10+{\rm i}}{100000}$ we find
\begin{alignat*}{3}
&w^{(\s_1)}(\t) \approx (-3.656 -4.937\thin {\rm i})\times10^{-1313},\qquad
&&\Phih(2\pi i\tau) \approx (4.351+2.821\thin {\rm i})\times 10^{-1390},&\\
& w^{(\s_2)}(\t) \approx (-6.057-9.343\thin {\rm i})\times10^{1388},\qquad&&
-{\rm i} \Phih(-2\pi {\rm i}\tau) \approx (-6.057-9.343\thin {\rm i})\times10^{1388},&
\end{alignat*}
so that $w^{(\s_2)}(\t)$ is indeed asymptotically close to~$-{\rm i}\Phih(-2\pi {\rm i}\tau)$
(and in fact their ratio equals~1 numerically to over 200 digits), but
$w^{(\s_1)}(\t)$ is not at all close to~$\Phih(2\pi {\rm i}\tau)$. On the other hand,
the ratio of $w^{(\s_1)}(\t)$ to $w^{(\s_2)}(\t)$ is extremely close to~$3\thin\tq$,
where $\tq:={\rm e}^{-2\pi {\rm i}/\t}$, and the corrected value
$w^{(\s_1)}(\t)-3\tq w^{(\s_2)}(\t)$ now coincides with~$\Phih(2\pi {\rm i}\tau)$ with
a relative accuracy of more than 200 digits. In other words, in this region
$w^{(\s_1)}(\t)$ is always asymptotically very close to
$\Phih(2\pi {\rm i}\tau)+3\thin {\rm i}\thin\tq\thin\Phih(-2\pi {\rm i}\tau)$, but there is a phase
transition on the line $ \arg(\t)=\arctan\big(V/2\pi^2\big)=0.10247\dots$, where the two
terms in this new approximation have the same order of magnitude as~$\t\to0$.
If we continue further to the left, then there is a new phase transition at
$\arg(\t)=\arctan\big(V/4\pi^2\big)=0.05137\dots$, where we need a further correction
term~$18\thin\tq^2\thin w^{(\s_2)}(\t)$ and similarly if we go further we find phase
transitions whenever $\arg(\t)=\arctan\big(V/2\pi^2m\big)$, where $\tq^m\Phih(-2\pi {\rm i}\tau)$
and $\Phih(-2\pi {\rm i}\tau)$ are of the same order of magnitude, the correction needed
at $\t=\frac{-40+{\rm i}}{100000}$ for instance being
$\big(3\tq+18\tq^2+99\tq^3+555\tq^4\big)w^{(\s_2)}(\t)$, which makes $w^{(\s_1)}$ agree with~$\Phih(2\pi {\rm i}\t)$
with a relative error of~$10^{-148}$ as opposed to the huge
$10^{+531}$ that we obtain without any correction. Note that we cannot find these
higher-order corrections in~$\tq$ by looking for a $\tq$-power series linear
combination of $\Phih(-2\pi {\rm i}\tau)$ and $\Phih(-2\pi {\rm i}\tau)$ that is very close
to $w^{(\s_1)}(\t)$, because even with improved optimal truncation we cannot evaluate
$\tq^m\Phih(-2\pi {\rm i}\tau)$ to the required degree of precision, but
since~$w^{(\s_1)}(\t)$ and~$w^{(\s_2)}(\t)$ are given in terms of convergent power
series that can be computed to any desired precision, we can find successive
terms of a power series $a=a_\pm(q)$ making $w^{(\s_1)}-aw^{(\s_2)}$ agree with
$\Phih(-2\pi {\rm i}\tau)$ to all orders $\t$ approaches the real line with any
argument between~0 and~$\pi$, and similarly (by studying the power series
near the positive real axis) another $\Z[[\tq]]$-power series linear combination
of $w^{(\s_1)}$ and $w^{(\s_2)}$ that agrees to all orders with $\Phih(2\pi {\rm i}\tau)$
in the entire upper half-plane. Both linear combinations are determined by these
requirements only up to multiplication of the whole expression by a power series
in~$\tq$ starting with~1. We will see later in Section~\ref{sec.matrix} why this
happens and how to find canonical $\Z[[\tq]]$-linear combinations of
$\t^{-1/2}g_0$ and~$\t^{1/2}g_1$ -- see equation~\eqref{W410-desc} below.

\subsection{From divergent to convergent power series}
\label{sub.convergent}

The third interesting corollary of Observations~\ref{ob.1} and~\ref{ob.4}
is obtained by combining them with equation~\eqref{eq.prop1} and the
fact that $Z_{4_1}(\t)$ is holomorphic in the cut plane~$\BC'$, since this
leads to startling predictions regarding the factorially divergent formal
power series $\Phi(h)\in\BR[[h]]$. Specifically, using the factorization of the state
integral given in~\eqref{eq.prop1}, the fact that each~$w^{(\s_j)}(\t)$ is a linear
combination of the functions $\tau^{-1/2}g_0(\tau)$ and $\tau^{1/2}g_1$, and the fact
that $g_0$ and $g_1$ are 1-periodic, we can re-express the state integral in terms
of $w$ as follows:
\begin{gather}
\label{eq.41swW}
(\tq/q)^{1/24} Z_{4_1}(\tau)  =
- w^{(\s_1)}(\tau-1)w^{(\s_2)}\biggl(\frac{\t-1}\t\biggr)
\+ w^{(\s_2)}(\tau-1)w^{(\s_1)}\biggl(\frac{\t-1}\t\biggr) .
\end{gather}
The fact that the state integral is holomorphic in~$\BC'$ implies that the right-hand
side of~\eqref{eq.41swW} has a Taylor expansion around $\t=1$ with radius of
convergence~1. However, this is wasteful because it uses only the holomorphy of
$Z_{4_1}$ in the disk~$|\t-1|<1$. If we use its holomorphy, first in
$\{\Re(\t)>0\}$ and then in all of~$\BC'$, then by making the changes of variables
\begin{gather}
\label{eq.taux0}
\tau  =  1+u  = \frac{1+v}{1-v}  = \biggl(\frac{1+w}{1-w}\biggr)^2 ,
\end{gather}
which give biholomorphic maps between the unit $u$-, $v$- and $w$-disks and the
sets $\{|\t-1|<1\}$, $\{\Re(\t)>0\}$ and $\BC'$, respectively, we find:

\begin{Corollary} Let $C  =  V/2\pi  =  0.3230659\dots$ and
$\Phi(x)\in\BR[[x]]$ be given by~\eqref{DefPhi}.
Each of the three formal power series $Q(u)\in\BR[[u]]$, $R(v)\in\BR\big[\big[x^2\big]\big]$
and $S(w)\in\BR\big[\big[w^2\big]\big]$ defined by
\begin{subequations}\label{eq3.6}
\begin{gather}
\label{DefQ}
Q(u)  =
{\rm e}^{-C} \Phi(2 \pi {\rm i} u)\Phi\biggl(-\frac{2 \pi {\rm i} u}{1+u}\biggr)
 - {\rm e}^{C} \Phi\biggl(\frac{2 \pi {\rm i} u}{1+u}\biggr)\Phi(-2 \pi {\rm i} u) ,
\\
\label{DefR}
R(v)  =
{\rm e}^{-C} \Phi\biggl(\frac{4 \pi {\rm i} v}{1-v}\biggr)
\Phi\biggl(-\frac{4 \pi {\rm i} v}{1+v}\biggr)
 - {\rm e}^{C} \Phi\biggl(-\frac{4 \pi {\rm i} v}{1-v}\biggr)
\Phi\biggl(\frac{4 \pi {\rm i} v}{1+v}\biggr) ,
\\
\label{DefS}
S(w)  =
{\rm e}^{-C} \Phi\biggl(\frac{8 \pi {\rm i} w}{(1-w)^2}\biggr)
\Phi\biggl(-\frac{8 \pi {\rm i} w}{(1+w)^2}\biggr)
 -
{\rm e}^{C} \Phi\biggl(-\frac{8 \pi {\rm i} w}{(1-w)^2}\biggr)
\Phi\biggl(\frac{8 \pi {\rm i} w}{(1+w)^2}\biggr)
\end{gather}
\end{subequations}
has radius of convergence $1$.
\end{Corollary}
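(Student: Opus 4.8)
The plan is to deduce the Corollary from the factorization identity~\eqref{eq.prop1} together with Observations~\ref{ob.1} and~\ref{ob.4}, by tracking how the holomorphy domain of $Z_{4_1}$ transforms under the three successive changes of variable. First I would rewrite~\eqref{eq.prop1} purely in terms of the asymptotic series $\Phih(\pm2\pi{\rm i}\tau)$. By~\eqref{ggas}, as $\tau\to0$ in the interior of the upper half-plane we have $g_0(\tau)\sim\sqrt\tau\,(\Phih(2\pi{\rm i}\tau)-{\rm i}\Phih(-2\pi{\rm i}\tau))$ and $g_1(\tau)\sim\tau^{-1/2}(\Phih(2\pi{\rm i}\tau)+{\rm i}\Phih(-2\pi{\rm i}\tau))$, and correspondingly for $\tq={\rm e}^{-2\pi{\rm i}/\tau}$ the series $g_0(\tq),g_1(\tq)$ (i.e.\ $g_j$ evaluated at $-1/\tau$) have the analogous expansions in $\pm2\pi{\rm i}(-1/\tau)$. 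Substituting all four expansions into the right-hand side of~\eqref{eq.prop1} and using $\Phih(h)={\rm e}^{{\rm i}V/h}\Phi(h)$, the cross terms of the form $\Phih(2\pi{\rm i}\tau)\Phih(2\pi{\rm i}(-1/\tau))$ contribute a factor ${\rm e}^{{\rm i}V/(2\pi{\rm i}\tau)+{\rm i}V/(2\pi{\rm i}(-1/\tau))}={\rm e}^{(V/2\pi)(1/\tau-\tau)}$ (and so on for the other sign choices); after also accounting for the prefactor $(\tq/q)^{1/24}={\rm e}^{-\pi{\rm i}(\tau+1/\tau)/12}$, exactly one of the four exponential factors survives as the ``geometric'' leading term, and collecting the two surviving cross-products $\Phih(2\pi{\rm i}\tau)\Phih(-2\pi{\rm i}(-1/\tau))$ and $\Phih(-2\pi{\rm i}\tau)\Phih(2\pi{\rm i}(-1/\tau))$ produces precisely the combination $Q(u)$ with $\tau=1+u$, after the identity $-1/\tau=-1/(1+u)$ is used to turn $\Phi(2\pi{\rm i}(-1/\tau))$ into $\Phi(-2\pi{\rm i}u/(1+u))$ up to the correct sign bookkeeping. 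This identifies the formal series $Q(u)$ as the asymptotic expansion, as $u\to0$, of $(\tq/q)^{1/24}Z_{4_1}(1+u)$ divided by the surviving scalar exponential ${\rm e}^{-C}$-type factor.

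The second step is to upgrade this formal identification into a statement about radius of convergence. Here the key input is that $Z_{4_1}(\tau)$ is holomorphic on the full cut plane $\BC'=\BC\sm(-\infty,0]$, hence in particular holomorphic and nonvanishing-in-a-suitable-sense on the open disk $|\tau-1|<1$, whose closure meets the cut only at $\tau=0$. The subtle point is that $Q(u)$ is \emph{not} literally the Taylor expansion of the analytic function $(\tq/q)^{1/24}Z_{4_1}(1+u)$ — that function is exponentially small, not given by a convergent power series — rather $Q(u)$ is the Borel-type asymptotic series built from $\Phi$. So what one actually proves is: the \emph{formal} bilinear combination of the two divergent series $\Phi$ appearing in~\eqref{DefQ}, when the cancellations forced by the holomorphy of $Z_{4_1}$ are taken into account, converges. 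The mechanism is that $\Phi(2\pi{\rm i}u)\Phi(-2\pi{\rm i}u/(1+u))$ and the other product are each factorially divergent, but their difference (with the ${\rm e}^{\pm C}$ weights) has the bilinear cancellation property already illustrated numerically in the Introduction (the ``dramatic drop in growth rate of the 150th coefficient''); and the precise radius of convergence is governed by the nearest singularity of $Z_{4_1}$ on the boundary of the domain, which for the $u$-disk is the point $\tau=0$, i.e.\ $u=-1$, giving radius exactly~$1$. I would make this rigorous by invoking the standard principle that if $F(\tau)$ is holomorphic on a domain $D$ and has an asymptotic expansion $\sum c_n(\tau-\tau_0)^n$ that arises from a resurgence structure whose Borel transform has its nearest singularity at the distance from $\tau_0$ to $\partial D$, then the ``bilinearly corrected'' series has that distance as its radius of convergence; this is exactly the content that is made precise in~\cite{GZ:kashaev,GZ:optimal} and in the companion discussion of holomorphic quantum modular forms.

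The third step handles $R(v)$ and $S(w)$ by the same argument applied to the larger domains $\{\Re\tau>0\}$ and $\BC'$ themselves, via the biholomorphisms $\tau=(1+v)/(1-v)$ and $\tau=((1+w)/(1-w))^2$ recorded in~\eqref{eq.taux0}. Under $\tau=(1+v)/(1-v)$ one has $-1/\tau=-(1-v)/(1+v)$, so $2\pi{\rm i}\tau$ and $2\pi{\rm i}(-1/\tau)$ become $4\pi{\rm i}v/(1-v)$-type and $-4\pi{\rm i}v/(1+v)$-type arguments after the substitution $u=\tau-1=2v/(1-v)$ is chased through; this is precisely the shape of~\eqref{DefR}. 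Since the half-plane $\{\Re\tau>0\}$ is the image of the unit $v$-disk and $Z_{4_1}$ is holomorphic there with nearest boundary singularity again at $\tau=0$ (i.e.\ $v=-1$, at distance~$1$ from the origin in the $v$-disk), the corrected series $R(v)$ has radius~$1$; and likewise $\BC'$ is the image of the unit $w$-disk with the cut $(-\infty,0]$ corresponding to $w=-1$, so $S(w)$ has radius~$1$. The ``$\Phi(2\pi{\rm i}u)^{\,}$ evaluated at purely imaginary argument'' parity — namely that $Q,R,S$ only involve even powers in $v,w$ — I would extract from the symmetry $Z_{4_1}(\tau)=Z_{4_1}(1/\tau)$, which under $\tau\mapsto1/\tau$ sends $v\mapsto-v$ and $w\mapsto-w$.

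The main obstacle I expect is the second step: making precise in what sense $Q(u)$ is ``the'' object whose radius of convergence is controlled by $Z_{4_1}$. The analytic function $(\tq/q)^{1/24}Z_{4_1}(1+u)$ is exponentially small and genuinely transcendental, so one cannot simply say ``$Q$ is its Taylor series.'' The honest route is to show directly that the two factorially divergent products in~\eqref{DefQ} have Gevrey-$1$ growth with a specific exponential constant, that their leading exponential growth rates cancel in the prescribed combination (this is where the ${\rm e}^{\pm C}$ weights and the precise form of the arguments $\pm2\pi{\rm i}u/(1+u)$ are essential, and is exactly the resurgence/Stokes computation already alluded to), and that the resulting entire-Borel-sum data has its nearest singularity at $u=-1$. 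Equivalently — and this is presumably the cleanest way to write it up — one can \emph{define} $Q(u)$ via the bilinear combination of the \emph{analytic} functions $g_0,g_1$ as in~\eqref{eq.41swW}–\eqref{eq.41g2w}, observe that that combination is literally $(\tq/q)^{1/24}Z_{4_1}$ which is holomorphic on $\BC'$, and then prove the formal power series of the Corollary is the ``asymptotic shadow'' of this honest analytic function in such a way that radius of convergence is preserved. I would flag this identification as the crux and carry it out by the optimal-truncation machinery of~\cite{GZ:optimal}, which is designed precisely to give the corrected divergent series a meaning with exponentially small error, thereby transferring the holomorphy of $Z_{4_1}$ to a genuine convergence statement for $Q$, $R$, and $S$.
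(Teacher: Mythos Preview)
There is a genuine gap, and it lies precisely where you flag ``the main obstacle''---but the obstacle is illusory, and it arises from a missing ingredient in step~1.

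In step~1 you apply the asymptotics~\eqref{ggas} to both $g_j(\tau)$ and $g_j(-1/\tau)$ with $\tau=1+u$. But~\eqref{ggas} is an expansion as the argument of~$g_j$ tends to~$0$; with $\tau=1+u\to1$ neither $\tau$ nor $-1/\tau\to-1$ tends to~$0$, so the substitution is illegitimate as written (your sentence ``the identity $-1/\tau=-1/(1+u)$ is used to turn $\Phi(2\pi{\rm i}(-1/\tau))$ into $\Phi(-2\pi{\rm i}u/(1+u))$'' is simply false: $-1/(1+u)\ne-u/(1+u)$). The missing step is the $1$-periodicity of~$g_0$ and~$g_1$, which the paper invokes explicitly to pass from~\eqref{eq.prop1} to~\eqref{eq.41swW}: replacing $g_j(\tau)$ by $g_j(\tau-1)$ and $g_j(-1/\tau)$ by $g_j((\tau-1)/\tau)$ makes both arguments $u$ and $u/(1+u)$ tend to~$0$ as $u\to0$, and only then does~\eqref{ggas} (equivalently~\eqref{NoSectors}) apply to each factor.

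Once that is fixed, your ``second step'' dissolves entirely. You assert that $(\tq/q)^{1/24}Z_{4_1}(1+u)$ is ``exponentially small, not given by a convergent power series,'' and that therefore $Q(u)$ cannot literally be its Taylor series---but this is wrong. The function is holomorphic at $\tau=1$ with a finite nonzero value (the numerical table records $Q(0)\approx-0.379$), and $Q(u)$ \emph{is} its Taylor series up to a factor of~$i$. The mechanism is that when one substitutes $\Phih(h)={\rm e}^{{\rm i}V/h}\Phi(h)$ into the right-hand side of~\eqref{eq.41swW}, the exponential prefactors combine as ${\rm e}^{V/(2\pi u)-V(1+u)/(2\pi u)}={\rm e}^{-C}$ (and ${\rm e}^{+C}$ for the other product): they cancel \emph{exactly} to constants, leaving a pure formal power series in~$u$ with no exponential prefactor whatsoever. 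That power series is then the full asymptotic expansion, as $u\to0$ along a ray, of a function holomorphic at~$u=0$; and a holomorphic function has a unique asymptotic expansion at an interior point, namely its Taylor series. Hence $Q(u)$ is that Taylor series, and its radius of convergence is the distance to the boundary of the holomorphy domain, which under each of the three charts in~\eqref{eq.taux0} is exactly~$1$. No resurgence, Borel summation, Gevrey estimates, or optimal-truncation machinery is needed; the ``cleanest way'' you mention in your last paragraph is not an alternative requiring further justification but is already the entire argument.
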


Note that the original formulas obtained from~\eqref{eq.41swW} would have had
$\Phih$'s instead of~$\Phi$'s and would not have had the scalar factors ${\rm e}^{\pm C}$,
which arise from a cancellation of an exponentially large and an exponentially
small prefactor. This also means that each of the three power series~$Q$,~$R$ and~$S$
has coefficients in the ring~$\Q\big(\pi,\sqrt{-3},{\rm e}^C\big)$.

What the corollary says is that, although the original power series $\Phi(x)$
occurring in the asymptotic expansion of the Kashaev invariant $\la4_1\ra_N$ was
factorially divergent, each of the combinations~$Q$, $R$ and $S$ defined
by~\eqref{eq3.6} are convergent power series with radius of convergence~1.
This can be seen dramatically in following table showing the growth of the
coefficients (rounded), part of which was already given in~\cite[equations~(35) and~(83)]{GZ:kashaev}:
$$\def\arraystretch{1.1}
\begin{array}{|c|cccc|} \hline
k & 0 & 50 & 100 & 150 \\ \hline
\;[h^k]\Phi(h)\; & 0.75 & 6.7 \cdot 10^{71}\vphantom{A^{B^C}} & 3.1 \cdot 10^{174}
& 7.4 \cdot 10^{283} \\
\;[v^k]Q(v)\; & -0.379 & 0.012 & -0.007 & 0.002 \\
\;[u^k]R(u\;) & -0.380 & -0.037 & 0.009 & -0.001 \\
\;[w^k]S(w)\; & -0.379 & -52068.5 & -43932564.0 & -75312313899.2 \\ \hline
\end{array}
$$
Note that the fact that the coefficients of $S$, although very much smaller than
those of~$\Phi$, are much larger than those of $Q$~and~$R$, does not mean that $S$
is the worst of these three series, but actually the best one, since the larger growth
reflects the fact that the unit $w$-disk corresponds to the entire domain of
holomorphy~$\BC'$ of the state integral rather than a subset like the two other series,
and that consequently this power series has essential singularities on the entire
unit circle rather than at only one or two points. (This observation was already
made in~\cite{GZ:kashaev}.)

\subsection[The asymptotics of G\_0(q) and G\_1(q) at roots of unity]{The asymptotics of~$\boldsymbol{G_0(q)}$ and~$\boldsymbol{G_1(q)}$ at roots of unity}
\label{sec.rootsofunity}

Observations~\ref{ob.1} and~\ref{ob.4} express the asymptotics of
the functions $G_0(q)$ and $G_1(q)$ at $q=1$ in terms of the
series $\Phih^{4_1}(h)$ which appears in the asymptotics of the Kashaev
invariant at $q=1$. We now extend the above observation to all roots
of unity using the series $\Phi^{4_1}_{\a}(h)$ that appear in the quantum
modularity theorem of the Kashaev invariant of the $4_1$
knot~\cite{GZ:kashaev}. Let us briefly recall the latter. Let
\[
\J \colon \ \BQ \to \overline{\BQ} \subset \BC
\]
denote the extension of the Kashaev invariant of $4_1$ \cite{K95} to $\BQ$
where $\J(-1/N)=\la K \ra_N$. The quantum modularity theorem for the $4_1$
knot asserts that for every matrix $\gamma=\sma abcd\in\SL_2(\Z)$ we have
\begin{gather}
\label{eq.QMCg}
\J\biggl(\frac{aX+b}{cX+d}\biggr) \sim (cX+d)^{3/2}
\Phih_{a/c}\biggl(\frac{2\pi {\rm i}}{c(cX+d)}\biggr)\J(X)
\end{gather}
to all orders in $1/X$ as $X\to\infty$ in~$\Q$ with bounded denominator
where $\a=a/c$,
\begin{gather*}
\Phih_\a(h)  =  {\rm e}^{{\rm i}V/c^2h} \Phi_\a(h)
\end{gather*}
and $\Phi_\a(h)$ is a power series with algebraic coefficients.
Various refinements of the quantum modularity conjecture were discussed in
detail in~\cite{GZ:kashaev}. Since $\J$ is 1-periodic (i.e., defined for
$\a \in \BQ/\BZ$), it follows that the series $\Phih_\a(h)$ depends on
$\a \in \BQ/\BZ$.

The reflection of the quantum modularity statement~\eqref{eq.QMCg} for the power
series $g_0$ and~$g_1$ is the following extension of equation~\eqref{ggas},
in which we have set $\t=\a+\ve/c $:

\begin{observation}
 For a rational number $\a=a/c$, we have
\begin{subequations}\label{eq.41}
 \begin{align}
 \label{eq.41cuspsg}
 g_0(\a+\ve/c) &  \sim  \sqrt \ve \bigl(\Phih_{-\a}(2 \pi {\rm i} \ve)
  -  {\rm i} \Phih_\a(-2 \pi {\rm i} \ve)\bigr) , \\
\label{eq.41cuspsG}
g_1(\a+\ve/c) &  \sim  \frac{1}{\sqrt{\ve}} \bigl(\Phih_{-\a}(2 \pi {\rm i} \ve)
+ {\rm i} \Phih_\a(-2 \pi {\rm i} \ve)\bigr)
\end{align}
\end{subequations}
to all orders in~$\ve$ as $\ve \in \BC\sm\BR$ tends to~$0$ in a cone in the interior
of the upper half-plane.
\end{observation}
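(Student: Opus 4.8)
The plan is to derive this cusp-refined asymptotic statement from Observations~\ref{ob.1} and~\ref{ob.4} (the $\a=0$ case already recorded in~\eqref{ggas}) together with the modular transformation behaviour encoded in the factorization~\eqref{eq.prop1} of the Andersen--Kashaev state integral. First I would fix $\a=a/c$ with $\gcd(a,c)=1$, pick a matrix $\g=\sma abcd\in\SL_2(\BZ)$, and write $\t=\a+\ve/c$; then $c\t+d = c\ve/c\cdot\,+\,(a+d)$ after substituting... more precisely $c(\a+\ve/c)+d = (c\a + d) + \ve = \ve + (a+d)/1$ is not yet in the right shape, so instead I would use $\g^{-1}\t = \dfrac{d\t-b}{-c\t+a}$ and observe that as $\ve\to 0$ in a cone in the upper half-plane, $\g^{-1}\t$ also tends to~$0$ in a cone in the upper half-plane, with $\g^{-1}\t \sim -\dfrac{1}{c^2\ve} + (\text{const})$. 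The point is that $g_0$ and $g_1$ are $1$-periodic, so $g_j(\t)=g_j(\t-\text{integer})$, and the state-integral identity~\eqref{eq.prop1}, rewritten in the $\SL_2(\BZ)$-equivariant form used later in the paper, expresses the pair $\big(\t^{-1/2}g_0(\t),\t^{1/2}g_1(\t)\big)$ near the rational cusp $\a$ in terms of the same pair evaluated near the cusp $-\a$ (the image of $\infty$ under the relevant modular transformation), multiplied by an explicit automorphy factor and a $\tq$-exponential that is asymptotically negligible to all orders.

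Concretely, the key steps are: (1) Use the factorization~\eqref{eq.prop1} with $q={\rm e}^{2\pi{\rm i}\t}$, $\tq={\rm e}^{-2\pi{\rm i}/\t}$, and feed in the known $\ve\to 0$ asymptotics of $g_0(\tq\text{-variable})$ and $g_1(\tq\text{-variable})$ near a root of unity; here the $\tq$-variable $-1/\t$ approaches the cusp $-d/c \equiv$ (a translate of) depending on the continued-fraction data of $\g$. (2) Invoke the quantum modularity theorem~\eqref{eq.QMCg} for $\J^{4_1}$, which controls exactly how $\Phih_\a$ transforms under $\SL_2(\BZ)$; combined with the state-integral/$q$-series matching (Observation~\ref{ob.1}, Observation~\ref{ob.4}, and the rational-point factorization of~\cite{GK:evaluation} alluded to in the introduction), this pins down the coefficients $\pm{\rm i}$ and the powers $\sqrt\ve$, $1/\sqrt\ve$ in~\eqref{eq.41cuspsg}--\eqref{eq.41cuspsG}. (3) Track the automorphy factors: the $(c X+d)^{3/2}$ in~\eqref{eq.QMCg} is responsible for the $\sqrt\ve$ versus $\ve^{-1/2}$ discrepancy between $g_0$ and $g_1$, and the exponential ${\rm e}^{{\rm i}V/c^2h}$ in $\Phih_\a$ matches the ${\rm e}^{{\rm i}V/h}$ in $\Phih$ after the rescaling $h\mapsto c^2 h$. (4) Check that all the extra terms produced by the factorization — products involving $\Phih$ evaluated at the "wrong" argument, and $\tq$-power prefactors — are exponentially suppressed in the chosen cone, hence do not affect the asymptotic expansion "to all orders in $\ve$"; this is the same mechanism (a cancellation of exponentially large against exponentially small prefactors) that appeared in the Corollary above, and it is what forces the cone condition rather than a full sector.

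Alternatively, and perhaps more in the spirit of the appendix proofs hinted at in the text, one can give a direct Euler--Maclaurin / saddle-point argument: write $G_0({\rm e}^{2\pi{\rm i}\t})$ and $G_1({\rm e}^{2\pi{\rm i}\t})$ near $q={\rm e}^{2\pi{\rm i}\a}$ using the Nahm-sum / $q$-hypergeometric presentations~\eqref{Defg}, \eqref{eq.3g}, \eqref{eq.Gsimple} and~\eqref{eq.G}, split the summation index modulo $c$, apply the quantum-dilogarithm asymptotics of $(q;q)_n$ type sums near roots of unity (as in~\cite{GZ:kashaev} and the references~\cite{BD}, \cite{AH}), and identify the two resulting saddle contributions with $\Phih_{-\a}(2\pi{\rm i}\ve)$ and $\Phih_\a(-2\pi{\rm i}\ve)$ via the known description of $\Phi_\a$ for the $4_1$ knot in terms of the Neumann--Zagier data of the two ideal tetrahedra. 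The main obstacle in either approach is Step~(4)/the cone issue: one must show that the "parasitic" cross-terms and the $\tq^m$-corrections — exactly the ones responsible for the phase transitions described around Observation~\ref{ob.6} — are uniformly $o(\ve^N)$ for every $N$ throughout an open cone containing the imaginary axis, which requires careful control of the error in the quantum-dilogarithm asymptotics uniformly in the direction of approach, not merely along a fixed ray. Everything else is bookkeeping with automorphy factors and the already-established identities~\eqref{eq.prop1}, \eqref{ggas}, and~\eqref{eq.QMCg}.
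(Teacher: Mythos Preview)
Your ``alternative'' direct approach is in fact the one the paper actually takes: Appendix~A.2 proves the $\a=0$ case by the Meinardus trick (rewrite $G_0$ as a constant term, apply the modular transformation of the Jacobi theta function to unfold the integral, then do stationary phase around the two critical points $u=\tfrac12\pm\tfrac13$), and then simply states that the general root-of-unity case $q=\zeta\,{\rm e}^{-h}$ is obtained by the same method, following the techniques of~\cite{GZ:asymptotics} for Nahm sums at roots of unity. So your second paragraph is on target, and the ``main obstacle'' you identify (uniform control in a cone) is exactly the analytic work that~\cite{GZ:asymptotics} supplies.

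Your primary modular-bootstrap approach, however, has a genuine gap. The factorization~\eqref{eq.prop1} is specific to the element $S=\sma{0}{-1}{1}{\hphantom{-}0}$ and relates the behaviour of $g_j$ at a cusp $\a$ to the behaviour at $-1/\a$, not at $-\a$ as you write; more importantly it gives a single \emph{bilinear} relation between two unknown vectors $(g_0,g_1)$ at two unknown cusps, so you cannot solve for either side starting only from the known $\a=0$ case. To propagate from $0$ to a general $\a$ you would need the $\SL_2(\BZ)$-equivariant state integral~\eqref{eq.I41g} and its holomorphic extension across the reals (Observation~\ref{ob.41gamma}), together with the evaluation of state integrals at rational points from~\cite{GK:evaluation}; but these results are logically downstream of the observation you are trying to prove (indeed, Theorem~\ref{thm.1} uses both ingredients and is presented \emph{after} the asymptotics, while the holomorphic extension is proved only in~\cite{GKZ:modular}). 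The quantum modularity statement~\eqref{eq.QMCg} concerns the Habiro-type function $\J$ on~$\BQ$, not the holomorphic functions $g_j$, and does not by itself furnish the missing link. In short, the modular route is circular at this point of the paper; the direct saddle-point argument is both what the paper does and what is logically available.
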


Finally, we reformulate the asymptotic expansions given in
equations~\eqref{eq.41} in a~way that
resembles the quantum modularity conjecture. Consider the vector-valued holomorphic
function $g=\bigl(\smallmatrix g_0 \\ g_1 \endsmallmatrix\bigr)$ on $\BC\sm\BR$,
where $g_0$ and $g_1$ are declared to have weights $-1/2$ and~$1/2$, and define
the corresponding vector-valued ``slash operator'' by
\[
\bigl(g\bigr|\g)(\t)  =  \begin{pmatrix} (c\tau+d)^{1/2} g_0(\g\t) \\
 (c\tau+d)^{-1/2} g_1(\g\t) \end{pmatrix}
 \]
for $\g=\sma abcd\in\SL_2(\R)$, where $\g\t=\frac{a\t+b}{c\t+d}$ as usual.
Then equations~\eqref{eq.41cuspsg} and~\eqref{eq.41cuspsG} can be written in the
equivalent form

\begin{observation}
 \label{ob.8}
 For any $\g=\sma abcd$ in $\SL_2(Z)$, we have
\begin{gather*}
\bigl(g|\gamma\bigr)(\t) \sim \begin{pmatrix} 1 & -1 \\ 1 & \hphantom{-}1 \end{pmatrix}
\Phih^\cdot_\a\biggl(\frac{2\pi {\rm i}}{c(c\t+d)}\biggr),
\qquad \t\in \BC\sm\BR, \qquad |\Im(\t)| \to \infty
\end{gather*}
to all orders in $1/\t$, where
$\Phih^\cdot_\a(h)=\Bigl(\smallmatrix \Phih_\a(h) \\ {\rm i} \Phih_\a(-h)
\endsmallmatrix\Bigr)$.
\end{observation}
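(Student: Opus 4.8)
The plan is to obtain Observation~\ref{ob.8} from the preceding observation, equations~\eqref{eq.41cuspsg}--\eqref{eq.41cuspsG} (whose proofs are sketched in the appendix), by a change of variables, since the two are the same family of asymptotic expansions written once ``at the cusp~$\a=a/c$'' and once ``at~$\infty$'' in the variable of the slash operator. Fix $\g=\sma abcd\in\SL_2(\BZ)$. The case $c=0$ is degenerate: there $\g=\pm\sma1b01$, so $(g|\g)(\t)$ equals, up to the constant scalar matrix from $(c\t+d)^{\pm1/2}$, the vector $\bigl(\smallmatrix g_0(\t)\\g_1(\t)\endsmallmatrix\bigr)$, which tends to $\bigl(\smallmatrix G_0(0)\\G_1(0)\endsmallmatrix\bigr)=\bigl(\smallmatrix1\\1\endsmallmatrix\bigr)$ as $|\Im\t|\to\infty$ by $1$-periodicity; I would dispose of this ``trivial cusp'' on its own. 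So assume $c\neq0$. The one algebraic ingredient is the Möbius identity, which from $ad-bc=1$ reads
\[
\g\t\;=\;\frac{a\t+b}{c\t+d}\;=\;\frac{a}{c}-\frac{1}{c(c\t+d)}\;=\;\a+\frac{\ve}{c},\qquad \a=\frac{a}{c},\quad \ve:=-\frac{1}{c\t+d}.
\]
As $\t\in\BC\sm\BR$ tends to infinity with $|\Im\t|\to\infty$ one has $\ve\to0$, and since $\Arg\ve=\pi-\Arg(c\t+d)$ the point $\ve$ stays in a cone in the interior of a half-plane --- the upper one when $c\Im\t>0$, the lower one otherwise --- so equations~\eqref{eq.41cuspsg}--\eqref{eq.41cuspsG}, or for $c\Im\t<0$ their complex-conjugate version (valid because $G_0,G_1$ have real coefficients and $G_0(q)=G_0(q^{-1})$, $G_1(q)=-G_1(q^{-1})$), apply to $g_0(\g\t)=g_0(\a+\ve/c)$ and $g_1(\g\t)=g_1(\a+\ve/c)$.

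Next I would substitute these into $(g|\g)(\t)=\bigl(\smallmatrix(c\t+d)^{1/2}g_0(\g\t)\\(c\t+d)^{-1/2}g_1(\g\t)\endsmallmatrix\bigr)$ and collect the scalar prefactors. The automorphy factor times the $\ve^{\pm1/2}$ produced by~\eqref{eq.41cuspsg}--\eqref{eq.41cuspsG} is the diagonal matrix $\diag\bigl((c\t+d)^{1/2}\ve^{1/2},\,(c\t+d)^{-1/2}\ve^{-1/2}\bigr)$, and since $\ve=-1/(c\t+d)$ the first entry squares to $-1$; a short argument with principal arguments (for $c\Im\t>0$ both $(c\t+d)^{1/2}$ and $\ve^{1/2}$ lie in the open first quadrant, so their product lies in the open upper half-plane) pins it to $+{\rm i}$, so the diagonal matrix is $\diag({\rm i},-{\rm i})$. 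Feeding this in turns~\eqref{eq.41cuspsg}--\eqref{eq.41cuspsG} into a single asymptotic identity $(g|\g)(\t)\sim B\bigl(\smallmatrix\Phih_{-\a}(2\pi {\rm i}\ve)\\{\rm i}\Phih_\a(-2\pi {\rm i}\ve)\endsmallmatrix\bigr)$ with $B$ an explicit constant $2\times2$ matrix. It then remains to recognize the right-hand side: the Möbius identity rewrites $\pm2\pi{\rm i}\ve$ in terms of $\frac{2\pi{\rm i}}{c(c\t+d)}$, the argument that occurs in the quantum modularity statement~\eqref{eq.QMCg}, once one carries through the normalization of $\Phih_\a$ at the cusp $a/c$ recalled before~\eqref{eq.41cuspsg}--\eqref{eq.41cuspsG}; and the identity $\Phih_\a=\Phih_{\a+1}$ together with the symmetry exchanging the two complex-conjugate geometric representations $\s_1,\s_2$ of $4_1$ lets one consolidate the two indices $\pm\a$ into the single vector $\Phih^\cdot_\a(h)=\bigl(\smallmatrix\Phih_\a(h)\\{\rm i}\Phih_\a(-h)\endsmallmatrix\bigr)$. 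Comparing the resulting $B\,\Phih^\cdot_\a\bigl(\frac{2\pi{\rm i}}{c(c\t+d)}\bigr)$ with the right-hand side of Observation~\ref{ob.8} is then a finite linear-algebra identity, and the proof is complete.

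The main obstacle is not analytic --- nothing is needed beyond the preceding observation --- but is exactly this final bookkeeping: threading the cusp normalization of $\Phih_\a$ so that $2\pi{\rm i}\ve$ becomes precisely $\frac{2\pi{\rm i}}{c(c\t+d)}$ and the two indices $\pm\a$ assemble into $\Phih^\cdot_\a$, while keeping the branches of $(c\t+d)^{\pm1/2}$ and $\ve^{\pm1/2}$ mutually consistent as $\t$ ranges over both half-planes (which is what forces the complex-conjugate form of~\eqref{eq.41cuspsg}--\eqref{eq.41cuspsG} to be invoked when $\Im\t<0$). Once those identifications are in place, Observation~\ref{ob.8} is literally the preceding observation rewritten.
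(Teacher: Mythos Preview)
Your approach is exactly what the paper does: it introduces Observation~\ref{ob.8} with the sentence ``Then equations~\eqref{eq.41cuspsg} and~\eqref{eq.41cuspsG} can be written in the equivalent form,'' i.e., it treats the observation as nothing more than the preceding one rewritten via the substitution $\g\t=\a+\ve/c$ with $\ve=-1/(c\t+d)$ and the slash operator. The bookkeeping you flag (branches of square roots, the passage from $\Phih_{-\a}(2\pi {\rm i}\ve)$ to $\Phih_\a\bigl(\frac{2\pi {\rm i}}{c(c\t+d)}\bigr)$ via the cusp normalization $\Phih_\a(h)={\rm e}^{{\rm i}V/c^2h}\Phi_\a(h)$, and the lower half-plane case) is precisely the content of that equivalence, and the paper does not spell it out further.
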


Notice that Observation~\ref{ob.8} has a corollary generalizing
the one given in Section~\ref{sub.convergent}, giving linear combinations of two
products of a $\Phi_a$-series and a $\Phi_{-1/\a}$-series with radius of
convergence~1 for any rational number~$\a$, and not just for $\a=1$ as before.
We leave the details to the reader.

\subsection{The quadratic relation}

We now describe some new phenomena that we observed using other knots.
The knot $4_1$ was amphicheiral and hence special: in general one should expect
an $r$-tuple of pairs of $q$-series, one on each half-plane, hence a total
of $2r$ $q$-series. (We will see in a later Section \ref{sub.matrix} the topological
meaning of this number~$r$). However, in the case of the $4_1$ knot, the four
$q$-series are actually two, each appearing twice, due to the amphicheirality of the
$4_1$ knot. On the other hand, the factorization integral for the $5_2$ knot
and for the $(-2,3,7)$ pretzel knot gives a total of 6 and 12 $q$-series.
For each knot, the collection of these $q$-series satisfies one quadratic relation,
which is trivial for the case of the $4_1$ knot.

Let us illustrate the quadratic relation using the $5_2$ knot as an example.
The Andersen--Kashaev state integral of the $5_2$ knot is given
by~\cite[equation~(39)]{AK}
\begin{gather*}
Z_{5_2}(\tau)  =  \int_{\BR + {\rm i} \varepsilon} \Phi_{\sqrt{\tau}}(x)^3
 {\rm e}^{- 2\pi {\rm i} x^2} {\rm d}x, \qquad \tau \in \BC' .
\end{gather*}
In~\cite{GK:qseries}, by the same type of
residue calculation as in the $4_1$ case, it is shown that $Z_{5_2}$ has
the decomposition
\begin{gather}
\label{eq.52}
2{\rm e}^{3{\rm i}\pi/4}(\tq/q)^{1/8} Z_{5_2}(\tau)  =
\tau h_2(\tau) h_0\big(\tau^{-1}\big) + 2h_1(\tau)h_1\big(\tau^{-1}\big)
\+ \frac1\tau h_0(\tau) h_2\big(\tau^{-1}\big)
\end{gather}
for $\tau \in \BC\sm\BR$, where
\begin{gather}
\label{eq.hqqt}
h_j(\t)  =  (\pm1)^j H_j^\pm\bigl ({\rm e}^{\pm2\pi {\rm i}\t}\bigr)\qquad
\text{for} \quad \pm\Im(\t)>0
\end{gather}
are holomorphic functions in a half-plane and $H^\pm_j(q)\in\Z[[q]]$ are
$q$-series with coefficients in~$1/6 \BZ$ defined by
\begin{gather}
\frac{(q {\rm e}^{\ve};q)_\infty^3}{(q;q)_\infty^3}
 \sum_{m=0}^\infty \frac{q^{m(m+1)} {\rm e}^{(2m+1)\ve}}{(q {\rm e}^{\ve};q)_m^3}
\nonumber \\
 \qquad {}=  H^+_0(q) +\ve H^+_1(q)
+ \frac{\ve^2}{2} \left(H^+_2(q)+\frac{1}{6} \calE_2(q) H^+_0(q)\right) + \text O(\ve)^3, \nonumber
\\
\frac{(q;q)_\infty^3}{(q {\rm e}^{-\ve};q)_\infty^3}
 \sum_{n=0}^\infty (-1)^n \frac{q^{n(n+1)/2} {\rm e}^{(n+1/2)\ve}}{(q {\rm e}^{\ve};q)_n^3}\nonumber\\
 \qquad{} =  H^-_0(q) \+ \ve H^-_1(q)
+ \frac{\ve^2}{2} \left(H^-_2(q)
+\left(\frac{1}{4}-\frac{1}{6} \calE_2(q)\right) H^-_0(q)\right) + O(\ve)^3, \label{eq.52deform}
\end{gather}
whose first few terms are given by
\begin{gather*}
 H^+_0(q)   =
 1 + q^2 + 3q^3 + 6q^4 + 10q^5 + \cdots,
\\
 H^-_0(q)   =  1 - q - 3q^2 - 5q^3 - 7q^4 - 6q^5 + \cdots,
\\
H^+_1(q)   =
 1 - 3q - 3q^2 + 3q^3 + 6q^4 + 12q^5 + \cdots,
\\
H^-_1(q)   =
\tfrac12 \bigl(
1 - 9q -21q^2 -19q^3 - 9q^4 + 54q^5\bigr) + \cdots,
\\
H^+_2(q)   =
\frac{5}{6} - 5q + \frac{53}{6} q^2 + \frac{117}{2} q^3 + 117q^4
+ \frac{601}{3} q^5 + \cdots,
\\
H^-_2(q)   =
\frac{1}{6} - \frac{37}{6} q - \frac{17}{2} q^2 + \frac{115}{6}q^3
+ \frac{389}{6}q^4 + \cdots,
\end{gather*}
and whose further properties are given in Appendix~\ref{sub.q52}.
Here, $\calE_2(q) = 1-24 \sum_{n \geq 1} \frac{q^n}{(1-q^n)^2}$ is the weight~2
Eisenstein series and $(x;q)_\infty=\prod_{k=0}^\infty \big(1-q^k x\big)$.

The index of the $5_2$ knot is given by the following expression:
\begin{align*}
\Ind_{5_2}(q) &{}= \sum_{k_1, k_2, k_3 \in \BZ}
\ID(-k_1, k_1 - k_2) \ID(-k_1, k_1 - k_2 - k_3)
\ID(2 k_1 - 2 k_2 -k_3, -k_1) \\
&{} =  1 - 12 q + 3 q^2 + 74 q^3 + 90 q^4 + 33 q^5 - 288 q^6 - 684 q^7
- \cdots .
\end{align*}
The next observation (a proof follows from results
of~\cite[Section 5.3]{GGM:peacock}) was expected given what we knew from the case
of the $4_1$ knot.

\begin{observation}
The $q$-series $H_j$ are related to the index by
\begin{gather*}
\Ind_{5_2}(q)  =  2 H^+_1(q) H^-_1(q) .
\end{gather*}
\end{observation}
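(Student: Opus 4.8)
The plan is to reproduce, for the $5_2$ knot, the mechanism that underlies the $4_1$ identity $\Ind_{4_1}(q)=G_0(q)G_1(q)$. The conceptual input is that the total 3D-index and the Andersen--Kashaev state integral are two different bilinear pairings of the \emph{same} family of holomorphic blocks: the state integral $Z_{5_2}$ is the ``$S$-gluing'' pairing exhibited in \eqref{eq.52}, whereas $\Ind_{5_2}$ is the ``identity-gluing'' pairing of the blocks $H_0^\pm(q),H_1^\pm(q),H_2^\pm(q)$ of \eqref{eq.52deform} by an \emph{a priori} different, but $\tau$-independent, pairing matrix; the precise relation between the two pairings is the content of the holomorphic-block formalism and, for this knot, of \cite[Section~5.3]{GGM:peacock}. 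So I would first set up the flavoured (descendant) blocks, keeping the variable $\zeta={\rm e}^{\ve}$ inside the two $q$-hypergeometric sums of \eqref{eq.52deform}, so that $H_0^\pm,H_1^\pm,H_2^\pm$ and the Eisenstein corrections are literally the coefficients of $1,\ve,\ve^2/2$ there, and assemble them into two $3\times3$ block matrices, one for $|q|<1$ and one for $|q|>1$.

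The main computation is to run the residue calculus of \cite{GK:qseries} directly on the index. Using that the tetrahedron index is a degeneration of Faddeev's quantum dilogarithm --- concretely, the generating-function identity expressing $\sum_e \ID(m,e)\zeta^e$ as a ratio of $q$-Pochhammer factors --- one rewrites the triple lattice sum $\Ind_{5_2}(q)=\sum_{k_1,k_2,k_3}\ID(-k_1,k_1-k_2)\,\ID(-k_1,k_1-k_2-k_3)\,\ID(2k_1-2k_2-k_3,-k_1)$ as a contour integral over three ``angle'' variables whose kernel has the same pole structure as the one for $Z_{5_2}$, but with the $\tq$-part of each quantum dilogarithm collapsed to a $q$-part (this is exactly the difference between ``$S$-gluing'' and ``identity-gluing''). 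Closing the three contours and collecting residues then produces a finite bilinear combination of the $\ve$-deformed sums of \eqref{eq.52deform}, i.e.\ a $\tau$-independent bilinear form in $(H_0^+,H_1^+,H_2^+)$ and $(H_0^-,H_1^-,H_2^-)$ whose matrix is dictated by the combinatorics of the triangulation (the same intersection data that yields the coefficients $1,2,1$ in \eqref{eq.52}). The point is that, after using the relations among the $H_j^\pm$ --- the single quadratic relation attached to the $5_2$ knot together with the $\calE_2$-identities built into \eqref{eq.52deform}, see Appendix~\ref{sub.q52} --- all the contributions involving $H_0^\pm$, $H_2^\pm$, and $\calE_2$ cancel, leaving exactly $2\,H_1^+(q)\,H_1^-(q)$. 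Finally, since both sides are explicitly computable $q$-series, I would pin down any residual scalar by matching the first several Taylor coefficients, using the $\ID$-lattice sum on the left and recursions of the type \eqref{eq.G01coeff} for the $H_j^\pm$ on the right.

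I expect the genuine obstacle to be the residue bookkeeping in the second step: as the three contours are closed one must keep track of precisely which poles of the three quantum dilogarithms contribute and with what multiplicity, and then recognise the resulting sum as the specific bilinear form that collapses to $2H_1^+H_1^-$. This is ``the same type of residue calculation as in the $4_1$ case'' only in spirit; it is substantially more intricate because the $5_2$ triangulation has three tetrahedra and the block space is three-dimensional rather than two-dimensional, and it is also the step one may prefer to import wholesale from \cite[Section~5.3]{GGM:peacock}. A less illuminating but self-contained alternative is purely $q$-holonomic: the left-hand side is $q$-holonomic by topological invariance of the index, and the right-hand side is $q$-holonomic because each $H_1^\pm$ is the $\ve^1$-coefficient of a $q$-holonomic family and products of $q$-holonomic series are $q$-holonomic; one then computes annihilating $q$-difference operators for both sides by creative telescoping, checks that they generate the same left ideal, and matches finitely many initial coefficients to conclude.
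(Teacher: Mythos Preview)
The paper does not give a proof of this observation at all: it simply records, in the sentence introducing it, that ``a proof follows from results of~\cite[Section~5.3]{GGM:peacock}'' and leaves it at that. Your proposal is therefore already more detailed than what the paper offers, and it ultimately points to the very same reference for the substantive step, so in that sense there is no disagreement.

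That said, what you have written is a plan rather than a proof, and the one place where you go beyond ``import from~\cite{GGM:peacock}'' is also the place where the argument is not yet justified. You assert that after the residue computation the resulting bilinear form in the $H_j^\pm$ collapses to $2H_1^+H_1^-$ \emph{by using the quadratic relation} $H_0^+H_2^- - 2H_1^+H_1^- + H_2^+H_0^- = 0$ together with some $\calE_2$-cancellations. But in the paper that quadratic relation is the \emph{next} observation, stated separately and likewise without an in-paper proof; invoking it here is at best out of order and at worst circular, depending on how~\cite{GGM:peacock} actually organises things. More importantly, you have not shown that the bilinear form you obtain from the index has exactly the shape that this relation would simplify---this is precisely the ``residue bookkeeping'' you flag as the genuine obstacle, and nothing in the sketch pins it down. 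Your $q$-holonomic alternative is a legitimate strategy in principle, but ``compute annihilating operators by creative telescoping and check they generate the same ideal'' is again a programme, not an argument, and for a triple lattice sum of $\ID$'s the computation is nontrivial. So: same destination as the paper (namely~\cite[Section~5.3]{GGM:peacock}), with a plausible but unexecuted roadmap around it.
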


The next observation, a quadratic relation among the 3 pairs of $q$-series
was unexpected and found by accident. This relation could
not be seen in the case of the $4_1$ knot, since it reduces to the empty
equation $G_0(q)G_1(q)-G_1(q)G_0(q)=0$, as a consequence of the fact that the
$4_1$ knot is amphicheiral.

\begin{observation}
The $q$-series $H_j$ satisfy the quadratic relation
\begin{gather*}
H^+_0(q)H^-_2(q) - 2H^+_1(q)H^-_1(q) \+ H^+_2(q)H^-_0(q)  =  0 .
\end{gather*}
\end{observation}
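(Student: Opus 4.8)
The plan is to derive the quadratic relation from a symmetry of the state integral $Z_{5_2}(\tau)$ together with the decomposition~\eqref{eq.52} and the analytic continuation of the functions $h_j$ across the real axis. Just as in the $4_1$ case the identity $Z_{4_1}(\tau)=Z_{4_1}(\tau^{-1})$ was the source of the symmetry $G_0(q)=G_0(q^{-1})$, $G_1(q)=-G_1(q^{-1})$, here the quantum dilogarithm's inversion relation forces $Z_{5_2}(\tau)=Z_{5_2}(\tau^{-1})$, and more importantly the structure of the poles of $\Phi_{\sqrt\tau}(x)^3$ lets one perform the residue calculation of~\cite{GK:qseries} in two different ways. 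First I would record the two factorizations: applying the residue calculation ``from above'' yields~\eqref{eq.52}, but shifting the contour the other way (or equivalently exploiting $q\leftrightarrow\tq$) produces a second bilinear expression for the same integral. Subtracting the two expressions for $Z_{5_2}(\tau)$ kills the left-hand side and leaves a bilinear identity in the $h_j(\tau)$ and $h_j(\tau^{-1})$ that must vanish identically for $\tau\in\BC\sm\BR$.

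The second step is to separate the $q$- and $\tq$-dependence. Because $h_j(\tau)$ for $\Im(\tau)>0$ depends only on $q={\rm e}^{2\pi{\rm i}\tau}$ and $h_j(\tau^{-1})$ depends only on $\tq={\rm e}^{-2\pi{\rm i}/\tau}$, and because $q$ and $\tq$ can be varied independently (this is the standard argument that a finite sum $\sum_i f_i(q)g_i(\tq)$ vanishing on an open set forces linear relations once the $g_i(\tq)$ are linearly independent), the vanishing bilinear identity decomposes into separate relations purely among the $H^\pm_j(q)$. One then has to check that the relevant $\tq$-series are linearly independent — which can be read off from their leading terms as listed in the excerpt — and the coefficient-matching produces exactly
\begin{gather*}
H^+_0(q)H^-_2(q) - 2H^+_1(q)H^-_1(q) + H^+_2(q)H^-_0(q) = 0.
\end{gather*}
An alternative and perhaps cleaner route: combine the decomposition~\eqref{eq.52} with the known ${\rm SL}_2(\BZ)$-cocycle property; writing the bilinear form $\langle h(\tau), h(\tau^{-1})\rangle$ with the symmetric pairing given by the matrix $\mathrm{diag}(\tau,2,\tau^{-1})$ appearing in~\eqref{eq.52}, the single holomorphic function $Z_{5_2}$ on $\BC'$ together with its behavior under $\tau\mapsto\tau+1$ (which acts on the $h_j$ by the known one-periodicity and on the $h_j(\tau^{-1})$ by a unipotent matrix) forces the pairing of $h(\tau)$ with the ``parabolic direction'' to be the quadratic invariant, which is precisely the stated combination.

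The main obstacle I expect is making the ``two ways of doing the residue sum'' step fully rigorous: one must control the contour shifts in the presence of the cubic power $\Phi_{\sqrt\tau}(x)^3$, whose poles are now higher-order (double and triple poles along the two families of pole lines), so the residues involve derivatives of the quantum dilogarithm and produce the Eisenstein-series corrections $\calE_2(q)$ visible in~\eqref{eq.52deform}. Keeping track of these $\calE_2$ terms and verifying that they cancel in the final quadratic relation — they must, since the stated relation is clean — is the delicate bookkeeping; the cleanest way to sidestep it is to instead quote the $\ve$-deformation generating functions in~\eqref{eq.52deform} directly and show that the $\ve^2$-coefficient of an appropriate product of the two generating series (the one with $\Phi^3$ and $q{\rm e}^\ve$, and its $\tq$-counterpart) is governed by a single state-integral identity, so that the quadratic relation falls out as the ``constant in $\ve$'' part after the $\calE_2$ contributions have been absorbed into the definitions of $H^\pm_2$ exactly as written. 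This is also the route taken in~\cite[Section~5.3]{GGM:peacock}, to which the paper defers for the full proof.
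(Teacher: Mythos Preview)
The paper does not actually prove this observation; it is presented as an empirical fact ``found by accident'', with no proof sketched. A rigorous proof does follow from material stated later in the paper: the orthogonality relation~\eqref{WWT52b} (quoted as a theorem from~\cite{GGM:peacock}) for the descendant matrix $w_m(q)$ specializes, after unwinding the relation between $H^{(m)}_j(q^{\pm1})$ and $H^\pm_j(q)$, to exactly this quadratic identity. So the ``paper's proof'' is really: observe it numerically, and note that it is a corollary of the descendant orthogonality proved elsewhere.

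Your main proposed route has a genuine gap. You want to produce two different bilinear factorizations of $Z_{5_2}(\tau)$ by shifting the contour up versus down (equivalently, by using $Z_{5_2}(\tau)=Z_{5_2}(\tau^{-1})$) and then subtract. But the factorization~\eqref{eq.52} is already manifestly symmetric under $\tau\leftrightarrow\tau^{-1}$: swapping $\tau$ and $\tau^{-1}$ interchanges the first and third terms and fixes the middle one. So the ``second expression'' you obtain is identical to the first, the subtraction gives $0=0$, and no relation among the $H^\pm_j$ falls out. The subsequent step---using that $q$ and $\tq$ ``can be varied independently'' to split a vanishing bilinear sum---therefore has nothing to act on. (Even when such an argument is available, it requires care: $q$ and $\tq$ are \emph{not} independent variables on $\BC\sm\BR$, and one needs an analytic-continuation or linear-independence argument rather than a naive separation.)

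Your ``alternative'' sketch using the $T$-action and a parabolic direction is closer in spirit to what actually works, but as written it is too vague to be a proof. The clean argument is exactly the orthogonality~\eqref{WWT52b}: it packages the quadratic relation (and its descendant generalizations) as a single matrix identity, proved in~\cite{GGM:peacock} by showing both sides satisfy the same $q$-difference equation in~$m$ with matching initial conditions. Your closing remark pointing to~\cite[Section~5.3]{GGM:peacock} is therefore the right reference, but note that the paper attaches that citation to the \emph{previous} observation (the index formula), not to this one.
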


We now discuss the asymptotics of the six $q$-series of the $5_2$
knot. Just as in the case of the $4_1$ knot, the asymptotics of
$h_j(\tau)$ as $\tau\in \BC\sm\BR$ tends to zero in a ray
are given by a rational linear combination of three asymptotic
series $\Phih^{(\s)}(h)$ that appear in the quantum modularity conjecture
of the $5_2$ knot~\cite{GZ:kashaev}, where $\s$ denotes one of the
three embeddings of the trace field of the $5_2$ knot (the cubic field of
discriminant $-23$ generated by $\xi$ with $\xi^3-\xi^2+1=0$). Each embedding
corresponds to a boundary parabolic $\SL_2(\BC)$ representations
of the fundamental group of the complement of the knot, with the convention
that $\s_1$, $\s_2$ and $\s_3$ denotes the geometric embedding,
(corresponding to $\Im(\xi)<0$, its complex conjugate, and the real
embedding of the trace field).
When $\tau$ approaches zero in a fixed generic ray, the three asymptotic
series $\Phih^{(\s_j)}(h)$ have different growth rates and this divides
each of the upper and lower half-plane into four sectors shown in
Figure~\ref{f.52rates}.

\begin{figure}[t]\centering
\includegraphics[height=0.17\textheight]{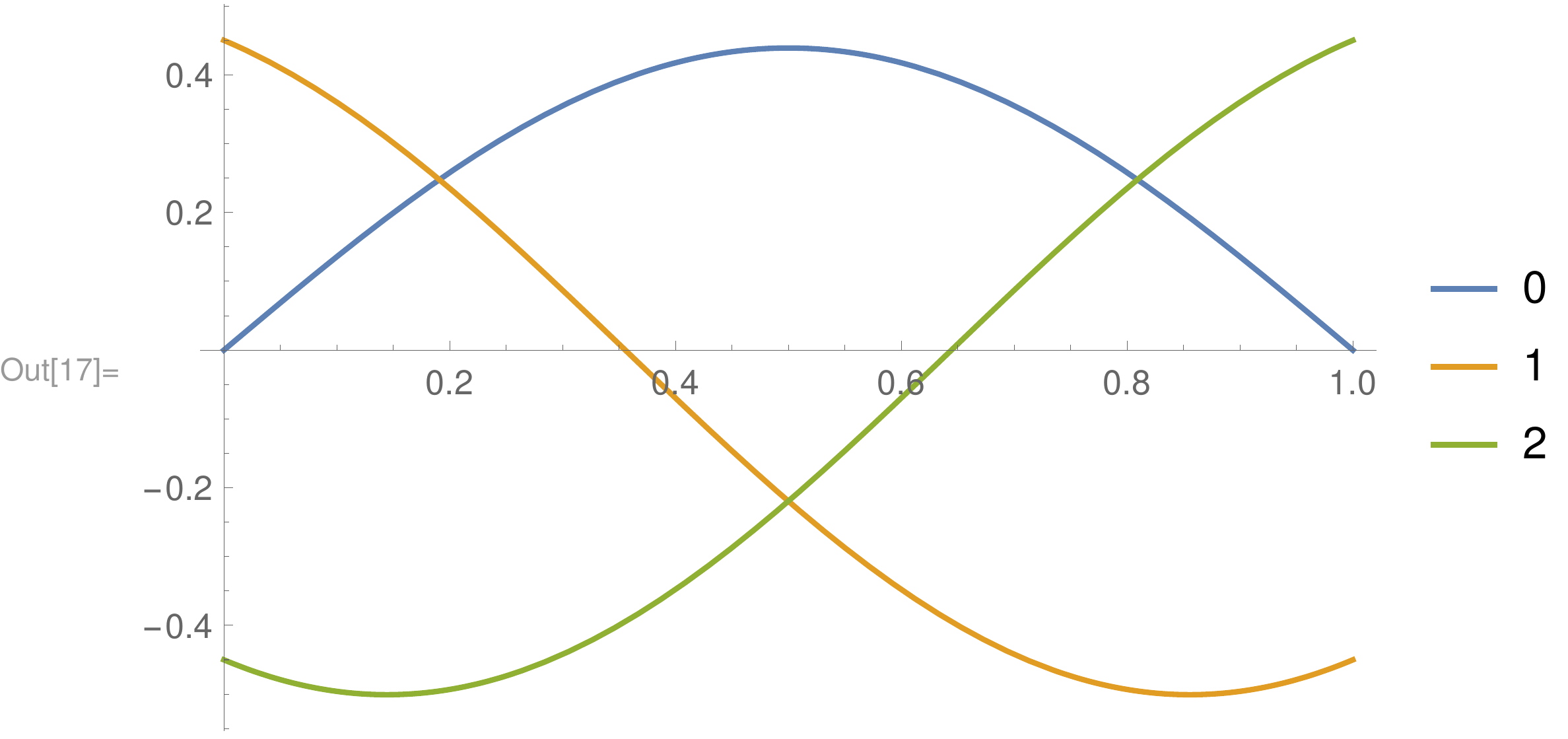}
\caption{A plot of the growth rates $\Re(\VC(\rho_j)/2\pi {\rm i}\tau)$
 of $w^{(\s_j)}(x)$ defined in equation~\eqref{eq.52was} for~${j=1,2,3}$ where
 $\arg(\tau)=\pi\th$ and $\theta \in (0,\pi)$.
 The branches cross at $0.19$, $0.5$, $0.81$ and partition the interval
$[0,1]$ in four sectors.}
\label{f.52rates}
\end{figure}

Just as in the case of the $4_1$ knot, the refined optimal truncation
of~\cite{GZ:kashaev} finds in each sector $R$ a unique matrix $M_R$ such that
$h(\tau) \sim M_R \Phih(2\pi {\rm i} \tau)$ as $\tau \in R$ and
$\tau \to 0$, where
\[
h=\begin{pmatrix}  \tau^{-1} h_0\\ h_1\\ \tau h_2 \end{pmatrix} \qquad
\text{and}\qquad
\Phih=\begin{pmatrix} \Phih^{(\s_1)} \\ \Phih^{(\s_3)} \\
\Phih^{(\s_2)} \end{pmatrix}.
\] Using $108$ exact coefficients of the
power series $\Phih$ and refined optimal truncation, we found the following.
\begin{observation}
We have
\begin{gather}
\label{eq.hv4}
h(\tau) \sim \begin{cases}
N_+ \Phih(2\pi {\rm i}\tau) & \text{when} \ \arg(\tau) \in (0,0.19) ,\\
N_- \Phih(2\pi {\rm i}\tau) & \text{when} \ \arg(\tau) \in (-\pi/2,0),
\end{cases}
\end{gather}
where
\begin{gather*}
N_+=
\begin{pmatrix}
\hphantom{-}1/2 & 1/2 & \hphantom{-}1 \\
 \hphantom{-}0 & 1/2 & \hphantom{-}1/2 \\
 -1/12 & 5/12 & -2/3
 \end{pmatrix}, \qquad
N_- =
\begin{pmatrix}
-1/2 & -1/2 & \hphantom{-}1/2 \\
\hphantom{-}3/4 & -1/4 & -1/4 \\
-13/12 & -1/12 & \hphantom{-}1/12
\end{pmatrix} .
\end{gather*}
\end{observation}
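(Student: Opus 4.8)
The plan is to combine the factorization identity \eqref{eq.52} for the $5_2$ state integral with the refined optimal truncation machinery, exactly as was done for the $4_1$ knot in the passage from \eqref{ggas} through Observation~\ref{ob.6}. The starting point is that $Z_{5_2}(\tau)$ is holomorphic and nonvanishing in the cut plane $\BC'$, so its logarithmic derivative, and more importantly its asymptotic expansion as $\tau\to0$ along any ray in $\BC'$, is governed by a single Borel-summable transseries whose leading exponential is controlled by the complexified volume of the geometric representation $\s_1$. On the right-hand side of \eqref{eq.52}, each $h_j(\tau)$ is (up to the sign $(\pm1)^j$ and the substitution $q\mapsto\tq$) a convergent $q$-hypergeometric series, and its radial asymptotics as $\tau\to0$ in a ray are, by the same Euler--Maclaurin/saddle-point analysis that underlies Observations~\ref{ob.1} and~\ref{ob.4}, a rational linear combination of the three completed series $\Phih^{(\s_j)}(2\pi{\rm i}\tau)$ attached to the three embeddings $\s_1,\s_2,\s_3$ of the trace field. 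Writing $h=(\tau^{-1}h_0,h_1,\tau h_2)^{t}$ and $\Phih=(\Phih^{(\s_1)},\Phih^{(\s_3)},\Phih^{(\s_2)})^{t}$ as in the statement, this says precisely that in each sector $R$ of Figure~\ref{f.52rates} there is a unique constant matrix $M_R$ with $h(\tau)\sim M_R\,\Phih(2\pi{\rm i}\tau)$; the content of the Observation is the explicit identification of $M_R$ with $N_+$ on $(0,0.19)$ and with $N_-$ on $(-\pi/2,0)$.

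The key steps, in order. First, I would establish the sector structure: the three growth rates $\Re\bigl(\VC(\rho_j)/2\pi{\rm i}\tau\bigr)$ cross at $\arg(\tau)=0.19,\,0.5,\,0.81$ (times $\pi$), so $(0,0.19)$ and $(-\pi/2,0)$ are genuine sectors in which a single $\Phih^{(\s_j)}$ dominates, and in such a sector the matrix $M_R$ is forced to be unique (the dominated rows are pinned down by Borel resummation along that ray, not merely read off from the leading term). Second, I would pin down the \emph{rational} entries of $M_R$ by symmetry and integrality: the cocycle property and the $1$-periodicity of the $h_j$ force $M_R$ to have entries in the trace field, Galois-stable up to permutation of $\s_2,\s_3$, and the normalization $h_0(0)=H_0^-(0)=1$, $h_1(0)=H_1^\pm(0)=1$, $h_2(0)$ computed from the $q^0$-terms listed after \eqref{eq.52deform} fixes the first column (or a suitable combination) — this is the analogue of using $g_0(0)=g_1(0)=1$ in the $4_1$ case. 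Third, I would feed the first $108$ exact Taylor coefficients of each $H_j^\pm(q)$, together with the $108$ exact coefficients of the perturbative series $\Phih^{(\s_j)}(h)$ (available from the quantum modularity data for $5_2$ in~\cite{GZ:kashaev}), into the refined optimal truncation algorithm of~\cite{GZ:kashaev,GZ:optimal}; this numerically determines each entry of $N_\pm$ to hundreds of digits, at which point one recognizes the rational values displayed. Finally, I would note that $N_+$ and $N_-$ must be related by crossing the Stokes ray at $\arg(\tau)=0$: $N_- = N_+\cdot(I + \text{Stokes correction})$, and the $4_1$-type phase-transition analysis (the $3\tq,18\tq^2,\dots$ corrections near the real axis) predicts the precise off-diagonal Stokes multipliers, giving an independent consistency check on $N_-$.

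The main obstacle is the same one that makes Observation~\ref{ob.6} only an ``Observation'': rigorously justifying that the refined optimal truncation actually \emph{computes} the dominated components of $M_R$, i.e., that the convergent series $h_j(\tau)$ really admit a resurgent transseries expansion whose subdominant terms are exactly $(M_R)_{jk}\Phih^{(\s_k)}$ for $k$ not the dominant index. For $4_1$ this rests on the explicit factorization of the state integral and the known resurgence structure of $\Phi^{4_1}$; for $5_2$ one needs the analogous resurgence statement for $\Phi^{5_2}$ (three Borel singularities, Stokes constants predicted by the DGG/peacock picture of~\cite{GGM,GGM:peacock}), which at the time of writing is itself partly conjectural. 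Thus I expect the honest status to be: the sector structure and the rational shape of $N_\pm$ are provable from \eqref{eq.52} plus Galois symmetry and the $q^0$-normalizations, but the exact numerical values of the dominated entries are \emph{verified} to high precision via optimal truncation rather than derived from first principles — which is why this is recorded as an Observation rather than a Theorem.
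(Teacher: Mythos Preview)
Your proposal is correct and matches the paper's own approach: the paper explicitly states just before this Observation that the matrices $N_\pm$ were found ``using $108$ exact coefficients of the power series $\Phih$ and refined optimal truncation,'' i.e., by precisely the numerical procedure you describe, and (as you correctly note) this is why it is recorded as an Observation rather than a Theorem. Your additional structural remarks about Galois symmetry, $q^0$-normalizations, and Stokes-crossing consistency go somewhat beyond what the paper actually claims, but they are compatible elaborations rather than a different method.
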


Inverting the matrices $N_\pm$ we obtain a vector
\smash{$w = \biggl(\smallmatrix w^{(\s_1)} \\ w^{(\s_3)} \\ w^{(\s_2)}
\endsmallmatrix\biggr)$} of holomorphic functions on $\BC\sm\BR$
\begin{gather*}
w(\tau) = \begin{cases}
N_+^{-1} h(\tau) & \text{when} \ \arg(\tau) \in (0,0.19) ,\\
N_-^{-1} h(\tau) & \text{when} \ \arg(\tau) \in (-\pi/2,0),
\end{cases}
\end{gather*}
that express equation~\eqref{eq.hv4} in the equivalent form
\begin{gather}
\label{eq.52was}
w^{(\s_j)}(\tau) \sim \Phih^{(\sigma_j)}(2\pi {\rm i}\tau),
\qquad \tau \to 0,
\end{gather}
when $\arg(\tau) \in (-\pi/2,0.19)\sm\{0\}$ and $j=1,2,3$.
Since the functions $h$ and $w$ are related by a linear transformation,
it follows that the state integral, the index and the quadratic identity
can be expressed in terms of the function $w$ as follows:
\begin{gather}
\label{eq.52quadwW}
0 = \sum_{j=1}^3 w^{(\s_j)} (\tau) w^{(\s_j)}(-\tau),
\\
2 Z_{5_2}(\tau) = \sum_{j=1}^3 w^{(\s_j)}(\tau-1)w^{(\s_j)}\big(\tau^{-1}-1\big) ,\label{eq.52swW}
\\
4 \Ind_{5_2}\big ({\rm e}^{2\pi {\rm i}\tau}\big)  =
w^{(\s_3)}(\tau)w^{(\s_3)}(-\tau)
-w^{(\s_1)}(\tau)w^{(\s_2)}(-\tau)
-w^{(\s_2)}(\tau)w^{(\s_1)}(-\tau) . \nonumber
\end{gather}
In terms of the $\Phi^{(\s_j)}$ series, equation~\eqref{eq.52quadwW}
and~\eqref{eq.52was} implies the quadratic identity
\begin{gather*}
\sum_{\sigma} \Phi^{(\s)}(x) \Phi^{(\s)}(-x) = 0 ,
\end{gather*}
(where we are summing over $\s \in \{\s_1,\s_2,\s_3\}$)
whereas equation~\eqref{eq.52swW} and ~\eqref{eq.52was} implies that
the expansion of $Z_{5_2}(\tau)$ around $\tau=1$ when
$\tau$ is given by~\eqref{eq.taux0} is a power series
\begin{gather*}
\sum_\sigma {\rm e}^{-C_\sigma}
\Phi^{(\s)}\biggl(\frac{2x}{1-x}\biggr)
\Phi^{(\s)}\biggl(-\frac{2x}{1+x}\biggr)
\end{gather*}
convergent when $|x|<1$. Here, $C_\s= \VC(\rho)/(2\pi {\rm i})$ where $\VC(\rho)$
is the complexified volume of the corresponding boundary parabolic
$\SL_2(\BC)$-representation $\rho$ of the fundamental group of the complement
of the $5_2$ knot.

\subsection{Higher level and weight spaces}
\label{sub.237}

In this section, we describe a new phenomenon, the level of a knot,
and examples where the weight spaces have higher multiplicity.
For the $(-2,3,7)$ pretzel knot, there are 6 pairs of $q$-series,
and the weight spaces are not one-dimensional; there are weights 0, 1 and 2
with dimensions~$1$,~$4$ and~$1$, respectively. The 6 pairs of $q$-series involve
power series in integer powers of $q^{1/2}$, meaning level $N=2$,
and so we should introduce the level of a knot, presumably the same as the
one coming from the periodicity of the degree of the colored Jones
polynomial~\cite{Ga:quasi, Ga:slope}. This $q^{1/2}$ will be upgraded to a whole
$\SL_2(\BZ)$ and $\Gamma(2)$ story in Section~\ref{sec.matrix}.
As an added complexity for the~$(-2,3,7)$ knot, the 6 asymptotic series
come in two Galois orbits of size 3 defined over the cubic field of
discriminant $-23$ (the trace field) and over the abelian field $\BQ(\cos(2\pi/7))$
of discriminant $49$. Moreover, the 3 complex volumes of the latter Galois
orbit are rational multiples of $\pi^2$.

To illustrate the new phenomenon, we begin by
introducing the 6 pairs of $q$-series for the $(-2,3,7)$ pretzel knot.
The state integral of the $(-2,3,7)$ pretzel knot was given
in~\cite[Appendix~B]{GK:evaluation}. Using the functional equation for Faddeev's quantum
dilogarithm~\cite[equation~(78)]{GK:evaluation}, and ignoring some prefactors, the
state integral is given by
\begin{gather}
\label{Psi237}
Z_{(-2,3,7)}(\tau)
 =
\biggl(\frac{q}{\tq}\biggr)^{-\frac{1}{24}}
\int_{\BR + {\rm i} c_b/2 + {\rm i} \varepsilon} \Phi_{\sqrt{\tau}}(x)^2 \Phi_{\sqrt{\tau}}(2x-c_b)
 {\rm e}^{-\pi {\rm i} (2x-c_b)^2} {\rm d}x, \qquad\tau\in\BC'
\end{gather}
with small positive $\varepsilon$, where $b=\sqrt{\tau}$ and
$c_b=\frac{{\rm i}}{2}\big(b+b^{-1}\big)$. Using the method of~\cite{GK:qseries}, we can express
the above state integral in terms of 6 $q$-series as follows.

\begin{Proposition}
\label{prop.237}
We have
\begin{align*}
\begin{split}
2 {\rm e}^{\frac{\pi {\rm i}}4}(q/\tq\bigr)^{1/24} Z_{(-2,3,7)}(\tau)
 ={}& - \frac{1}{2\tau} h_0(\tau) h_2\big(\tau^{-1}\big) \+ h_1(\tau) h_1\big(\tau^{-1}\big)
 - \frac\tau{2} h_2(\tau) h_0\big(\tau^{-1}\big)
\\ \notag
&{}
+\frac1{\tau} \bigl( h_3(\tau) h_4\big(\tau^{-1}\big) - h_4(\tau) h_3 \big(\tau^{-1}\big)
+ h_5(\tau) h_5\big(\tau^{-1}\big)\bigr)
\end{split}
\end{align*}
for $\tau \in \BC\sm\BR$, with the same convention as
in~\eqref{eq.hqqt}, but with $(\pm1)^j$ replaced by $(\pm1)^{\delta_j}$
where $(\delta_0,\dots,\delta_5)=(0,1,2,0,0,0)$ denotes the $\ve$-deformation degree
and where the $H^\pm_j(q)$ are power series in $q^{1/2}$ whose first few terms
are given by
\begin{gather}
 H^+_0(q)   =  1 + q^3 + 3 q^4 + 7 q^5 + 13 q^6 +\cdots, \nonumber
\\
 H^-_0(q)   =  1 + q^2 + 3 q^3 + 7 q^4 + 13 q^5 +\cdots, \nonumber
\\
 H^+_1(q)   =  1 - 4 q - 8 q^2 - 3 q^3 + 3 q^4 +\cdots, \nonumber
\\
 H^-_1(q)   =  1 - 4 q - 5 q^2 + q^3 + 7 q^4 +\cdots, \nonumber
\\
 H^+_2(q)   =  \frac{2}{3} - 6 q + 6 q^2 + \frac{242}{3} q^3 + 200 q^4 +\cdots, \nonumber
\\
 H^-_2(q) =  \frac{5}{6} - 10 q + \frac{17}{6} q^2 + \frac{141}{2} q^3 + \frac{971}{6} q^4 +\cdots, \nonumber
\\
 H^+_3(q)   =  q + 3 q^2 - 2 q^{5/2} + 8 q^3 - 8 q^{7/2} +\cdots, \nonumber
\\
 H^-_3(q)   =  q + 4 q^{3/2} + 9 q^2 + 18 q^{5/2} + 31 q^3 +\cdots, \nonumber
\\
 H^+_4(q)   =  1 + 4 q + 12 q^2 + 33 q^3 + 79 q^4 +\cdots, \nonumber
\\
 H^-_4(q) =  \frac{1}{4} - q + \frac{5}{4} q^2 - \frac{5}{4} q^3 + \frac{15}{4} q^4 +\cdots, \nonumber
\\
 H^+_5(q)   =  q + 3 q^2 + 2 q^{5/2} + 8 q^3 + 8 q^{7/2} +\cdots, \nonumber
\\
 H^-_5(q)   =  q - 4 q^{3/2} + 9 q^2 - 18 q^{5/2} + 31 q^3 +\cdots, \label{eq.237H}
\end{gather}
and whose precise definition and properties are given in Appendix~{\rm \ref{sub.q237}}.
\end{Proposition}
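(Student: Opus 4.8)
The plan is to follow the residue-calculus method of \cite{GK:qseries} that was already used successfully for the $4_1$ and $5_2$ knots, adapted to the integrand of \eqref{Psi237}. First I would make the substitution $b=\sqrt\tau$, write the two copies of $\Phi_{\sqrt\tau}(x)$ and the single copy of $\Phi_{\sqrt\tau}(2x-c_b)$ in terms of the ratio of two infinite $q$-Pochhammer (resp.\ $\tq$-Pochhammer) products, and then push the contour of integration $\BR+{\rm i}c_b/2+{\rm i}\varepsilon$ to $+{\rm i}\infty$. The key observation, as in the earlier cases, is that the poles of the integrand come in two families: those coming from the numerator zeros of the $\tq$-Pochhammer symbols in the denominators of the $\Phi_{\sqrt\tau}$'s (located at points where ${\rm e}^{2\pi b^{-1}x}$ or ${\rm e}^{2\pi b^{-1}(2x-c_b)}$ hits $-\tq^{-1/2}\tq^{-k}$), which produce the $\tq$-series, and those from the $q$-Pochhammer numerators, which produce the $q$-series. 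Because the integrand is a product of three quantum dilogarithms rather than two or three simple ones, the double pole structure (coming from the square $\Phi_{\sqrt\tau}(x)^2$ overlapping with the single factor) forces residues that are $\varepsilon$-derivatives, which is precisely the source of the $\ve$-deformation degrees $(\delta_0,\dots,\delta_5)=(0,1,2,0,0,0)$ and of the appearance of the Eisenstein-type corrections in the defining equations \eqref{eq.52deform}-type generating series for the $H_j^\pm$. I would organize the pole contributions into six groups according to which Pochhammer factor supplies the pole and with what multiplicity, giving the six pairs $(H_j^+,H_j^-)$.

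Next I would compute each residue contribution explicitly. For the triple product, a pole at level $k$ in one variable leaves behind a sum over the remaining summation indices that is itself a (one- or two-dimensional) $q$-hypergeometric series in the residual summation variables; collecting these sums is exactly what produces the Nahm-type $q$-series $H_j^\pm(q)$ with the stated generating functions in Appendix~\ref{sub.q237}. The bookkeeping of prefactors — the $(q/\tq)^{\pm1/24}$ factors already visible in \eqref{Psi237}, the ${\rm e}^{\pi{\rm i}/4}$, the Gaussian ${\rm e}^{-\pi{\rm i}(2x-c_b)^2}$ evaluated at the pole locations, and the modular transformation $q\leftrightarrow\tq$ implied by $\tau\leftrightarrow\tau^{-1}$ in the arguments $2x-c_b$ — is what assembles the residues into the bilinear pairing $\sum h_j(\tau)h_{j'}(\tau^{-1})$ with the specific index pairings $(0,2),(1,1),(2,0),(3,4),(4,3),(5,5)$ and the coefficients $-1/(2\tau),+1,-\tau/2,+1/\tau$ appearing in the statement. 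The convention \eqref{eq.hqqt} with $(\pm1)^j$ replaced by $(\pm1)^{\delta_j}$ records that the $j=1$ and $j=2$ series pick up signs from odd/even $\ve$-degree under $q\to q^{-1}$, exactly as $G_1(q)=-G_1(q^{-1})$ did for $4_1$.

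The half-integer powers of $q$ deserve separate attention: they arise because the argument $2x-c_b$ with $c_b=\frac{{\rm i}}2(b+b^{-1})$ means the shift $c_b$ contributes a factor $q^{1/4}\tq^{1/4}$-type term, so that poles coming from the $\Phi_{\sqrt\tau}(2x-c_b)$ factor sit at half-integer positions relative to those from $\Phi_{\sqrt\tau}(x)$; this is the analytic incarnation of the ``level $N=2$'' phenomenon and it is why $H_3^\pm$ and $H_5^\pm$ visibly contain $q^{1/2}$-powers in \eqref{eq.237H} while $H_0^\pm,H_1^\pm,H_2^\pm$ do not. I would verify the first few coefficients of all twelve series against \eqref{eq.237H} by expanding both sides to low order — a purely mechanical check — and I would verify that the total bilinear combination reproduces $Z_{(-2,3,7)}(\tau)$ to high numerical precision as an independent consistency test before claiming the identity holds for all $\tau\in\BC\sm\BR$ by analytic continuation from a region where all the $q$- and $\tq$-series converge absolutely.

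The main obstacle will be controlling the double poles cleanly: when the two zeros from $\Phi_{\sqrt\tau}(x)^2$ collide with a zero from $\Phi_{\sqrt\tau}(2x-c_b)$ one gets, a priori, a triple pole, and the residue is a second-order $\ve$-derivative whose correct regularization is what forces the Eisenstein-series correction terms (the $\calE_2(q)$-type pieces seen in the $5_2$ generating functions) into the definition of the weight-$2$ series $H_2^\pm$. Getting the normalization of those correction terms exactly right — so that the $H_j^\pm$ are genuinely the objects whose asymptotics later match the quantum modularity series $\Phih^{(\s)}$ — is the delicate point; everything else is the same residue calculation as in \cite{GK:qseries}, just with more terms.
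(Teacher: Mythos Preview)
Your approach is essentially the one the paper uses, and the overall plan (push the contour, collect residues, expand the higher-order poles in~$\ve$) is correct. Two points in your description of the pole structure are misstated, though, and would cause confusion if you followed them literally.

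First, the poles of the integrand are not sorted into a ``$q$-producing'' family and a ``$\tq$-producing'' family. Each pole of $\Phi_b$ lies at a point $x_{m,n}=c_b+{\rm i}bm+{\rm i}b^{-1}n$ indexed by a \emph{pair} $(m,n)$, and the residue at such a point is already a product of a piece depending on~$m$ (summing to a $q$-series) and a piece depending on~$n$ (summing to a $\tq$-series). The bilinear $q$--$\tq$ factorization happens pole by pole, not by separating the poles into two families.

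Second, the six pairs $(H_j^+,H_j^-)$ do not come from ``six groups according to which Pochhammer factor supplies the pole.'' The integrand has poles at $x_{m/2,n/2}$ for $m,n\ge0$: those with $m,n$ both even are \emph{triple} poles (the double pole of $\Phi_b(x)^2$ meets the simple pole of $\Phi_b(2x-c_b)$), and the $\ve$-expansion of their residues produces the three series $H_0,H_1,H_2$ (degrees $0,1,2$); the remaining poles are simple and split into the three parity classes $(m,n)\equiv(1,0),(0,1),(1,1)\bmod 2$, each class summing to one of $H_3,H_4,H_5$. So there are four families of poles, not six, and the first family yields three series at once via the $\ve$-derivative mechanism you correctly describe in your final paragraph. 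The half-integer powers in $H_3^\pm$ and $H_5^\pm$ arise from the parity classes with $m$ odd, since $\Phi_b(x)^2$ evaluated at $x_{m,n}+{\rm i}b/2$ brings in $(q^{3/2};q)_\infty$-type factors. With these corrections your outline matches the paper's proof in Appendix~\ref{sub.q237}.
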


The vector space $\la H \ra$ spanned by $(H_0,\dots,H_5)$ has the
($\ve$-deformation) weight decomposition
\[
\la h \ra = W_0 \oplus W_1 \oplus W_2, \qquad W_0=\la H_0, H_3, H_4, H_5 \ra, \qquad
 W_1 = \la H_1 \ra, \qquad W_2 = \la H_2 \ra .
\]
There is a representation $\rho$ of $\SL_2(\BZ)$ on $\la H \ra$ which is
the identity on $W_1$ and $W_2$ and has kernel $\Gamma(2)$ on $W_0$. Thus,
the action of $\rho$ on $W_0$ comes from a representation $\rho'$ of the
quotient group $S_3=\Gamma/\Gamma(2)$. The latter decomposes as the direct sum
of the 2-dimensional irreducible representation of $S_3$ and two copies of
the trivial representation of $S_3$.

The index of the $(-2,3,7)$ pretzel knot is given by the following expression:
\begin{align*}
\Ind_{(-2,3,7)}(q) & = \sum_{k_1, k_2, k_3 \in \BZ}
(-q^{\frac{1}{2}})^{k_1 - 2 k_2} \ID(2 k_2, k_1 - 2 k_2 - k_3) \\
& \qquad\qquad\quad \times \ID(-k_1 + k_2, k_1 - 2 k_2) \ID(k_1 - 2 k_2 - 2 k_3, k_2) \\
&  =  1 - 8 q + 3 q^2 + 50 q^3 + 58 q^4 + 13 q^5 - 196 q^6 - 456 q^7
- \cdots.
\end{align*}

\begin{observation}
The relation with the index is given by
\begin{gather*}
\Ind_{(-2,3,7)}(q)  =  H^+_1(q) H^-_1(q)
\end{gather*}
and the following quadratic relation holds:
\begin{gather*}
 \frac{1}{2} H^+_0(q) H^-_2(q) - H^+_1(q) H^-_1(q) + \frac{1}{2} H^+_2(q) H^-_0(q)
\\
\qquad {}- H^+_3(q) H^-_3(q) + H^+_4(q) H^-_4(q) - H^+_5(q) H^-_5(q) =0 .
\end{gather*}
\end{observation}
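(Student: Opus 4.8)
The plan is to treat both identities as the $(-2,3,7)$-analogues of the corresponding statements for the $5_2$ knot, namely $\Ind_{5_2}(q)=2H^+_1(q)H^-_1(q)$ and the quadratic relation $H^+_0H^-_2-2H^+_1H^-_1+H^+_2H^-_0=0$. Each is an identity of power series in $q^{1/2}$ with bounded denominators, so it suffices to prove it as such, working throughout with the $q$-hypergeometric (Nahm-type) formulas for the $H^\pm_j$ recorded in Appendix~\ref{sub.q237}, the $(-2,3,7)$-counterparts of the $5_2$ expansions~\eqref{eq.52deform}.

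For the index identity I would follow~\cite[Section~5.3]{GGM:peacock}, where the total $3$D-index of a knot is identified with a specific (non-naive) bilinear pairing of the holomorphic blocks occurring in the factorization of its state integral. Here those blocks are the $H^\pm_j$ of Proposition~\ref{prop.237}, and the pairing singles out the $h_1(\tau)h_1(\tau^{-1})$-term, whose coefficient in Proposition~\ref{prop.237} is $1$ (as against the $2h_1(\tau)h_1(\tau^{-1})$ term in~\eqref{eq.52} that produces $\Ind_{5_2}=2H^+_1H^-_1$), giving $\Ind_{(-2,3,7)}(q)=H^+_1(q)H^-_1(q)$ up to an explicit normalization. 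One should not try to shortcut this by resumming the defining triple lattice sum into a square, since the analogous ``illegitimate'' rearrangement already fails for $4_1$ (it suggests $G_0^2$ rather than $G_0G_1$). A useful check on the final formula is to match the first $\sim100$ Taylor coefficients against~\eqref{eq.237H} and then invoke $q$-holonomicity.

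For the quadratic relation I would use the state integral~\eqref{Psi237} and Proposition~\ref{prop.237} in the way~\eqref{eq.52quadwW} arose for $5_2$. Via the linear change of basis that is the $(-2,3,7)$-analogue of the matrices $N_\pm$ (the block vector $w$ being the one making the state integral the pairing of $w(\tau-1)$ with $w(\tau^{-1}-1)$, cf.~\eqref{eq.52swW}), the claim is equivalent to $\sum_\sigma w^{(\sigma)}(\tau)\,w^{(\sigma)}(-\tau)=0$. To prove this I would: (i) note that the $w^{(\sigma)}$, and hence the $h_j$, are solutions of a modular linear $q$-difference equation~\cite{GW:modular}, and that $\sum_\sigma w^{(\sigma)}(\tau)w^{(\sigma)}(-\tau)$ is a bilinear invariant of that equation, covariant under the $\SL_2(\BZ)$-representation $\rho$ described after Proposition~\ref{prop.237}; (ii) use this covariance to reduce the evaluation to the cusp $\tau\to i\infty$; (iii) evaluate there using the leading terms of the $H^\pm_j$ in~\eqref{eq.237H}, where the $\tq\to0$ degeneration and the weight grading force the combination to vanish. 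A fallback avoiding this structural input is a purely $q$-holonomic verification: both sides are $q$-holonomic in $q^{1/2}$, so after bounding the order of a common annihilating operator the identity reduces to a finite coefficient check; one might also try to reduce it to the already-established $5_2$ quadratic identity, since the two triangulations share much of their combinatorics. (The $6\times6$ analysis specific to this knot is carried out in detail in~\cite{237}.)

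The hard part will be the quadratic relation. Unlike the index identity---which has a transparent meaning and follows from the $5_2$ precedent with only bookkeeping changes---it was found empirically, with no obvious geometric source, so a clean proof needs the $q$-difference/cocycle machinery set up carefully for this knot: the level-$2$ subtleties (half-integer powers of $q$) and the non-irreducible weight decomposition $\la H\ra=W_0\oplus W_1\oplus W_2$, with $\dim W_0=4$ and $\rho|_{W_0}$ factoring through $\SL_2(\BZ)/\Gamma(2)\cong S_3$, all enter. Along the brute-force route the genuine obstacle is instead an effective bound on the holonomic order of the relevant operator, so that finitely many coefficients suffice to certify the identity.
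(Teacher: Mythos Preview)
The paper does not prove this observation. As stated in the introduction, the ``Observations'' in this paper are empirical findings, some proved and some still conjectural, with the status indicated case by case. For this particular observation about the $(-2,3,7)$ pretzel knot the paper gives no proof and no reference to one; it is presented purely as a numerically verified statement, with the detailed $6\times6$ analysis deferred to~\cite{237}. Even the $5_2$ quadratic relation, which you take as your template, is not proved in the paper (only the $5_2$ index identity is, via~\cite[Section~5.3]{GGM:peacock}).

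So there is no ``paper's own proof'' to compare against, and your proposal should be read as a plan rather than a reproduction. As a plan it is reasonable in outline: the index identity via the holomorphic-block pairing of~\cite{GGM:peacock} is the natural route and should go through with bookkeeping for the level-$2$ and higher-multiplicity features you flag; the quadratic relation via a bilinear invariant of the underlying $q$-difference module is plausible but your step~(iii) is not yet an argument---showing that the combination vanishes at the cusp and that cusp-vanishing forces identical vanishing requires you to actually exhibit the invariance and the relevant annihilator, which you have not done. Your $q$-holonomic fallback is sound in principle but, as you note, hinges on an effective order bound you do not supply. In short: your strategy is sensible, but neither the paper nor your proposal contains a proof.
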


Just in the case of the $4_1$ knot and the $5_2$ knots, the asymptotics of
$h_j(\tau)$ as $\tau\in \BC\sm\BR$ tends to zero in a ray are given
by a rational linear combination of the asymptotic series $\Phih^{(\s)}(h)$
that appear in the quantum modularity conjecture of the $(-2,3,7)$
knot~\cite{GZ:kashaev}. However, this knot has 6 boundary parabolic
$\SL_2(\BC)$ representations, arranged in two Galois orbits of size~3, one
defined over the trace field of the $(-2,3,7)$ pretzel knot \big(the cubic field
of discriminant~$-23$ generated by $\xi$ with $\xi^3-\xi^2+1=0$\big) and another
defined over the real abelian field $\BQ(2\cos(2\pi/7))$. Let
$\{\s_1,\s_2,\s_3\}$ denote the three embeddings of the trace field
corresponding to $\Im(\xi)<0$, $\Im(\xi)>0$ and $\Im(\xi)=0$, and
let $\{\s_4,\s_5,\s_6\}$ denote the three embeddings of
$\Q(\eta)$ with $\eta^3+\eta^2-2\eta-1=0$ (the abelian cubic field with
discriminant~$49$) into~$\BC$ given by sending~$\eta$ to $2\cos(2\pi/7)$,
$2\cos(4\pi/7)$ and $2\cos(6\pi/7)$, respectively.
When $\tau$ approaches zero in a fixed generic ray, the six asymptotic
series $\Phih^{(\s_j)}(h)$ have different growth rates, and the ordering of
the growth rates in each ray is dictated by Figure~\ref{f.237rates}. \looseness=1

\begin{figure}[t]\centering
\includegraphics[height=0.17\textheight]{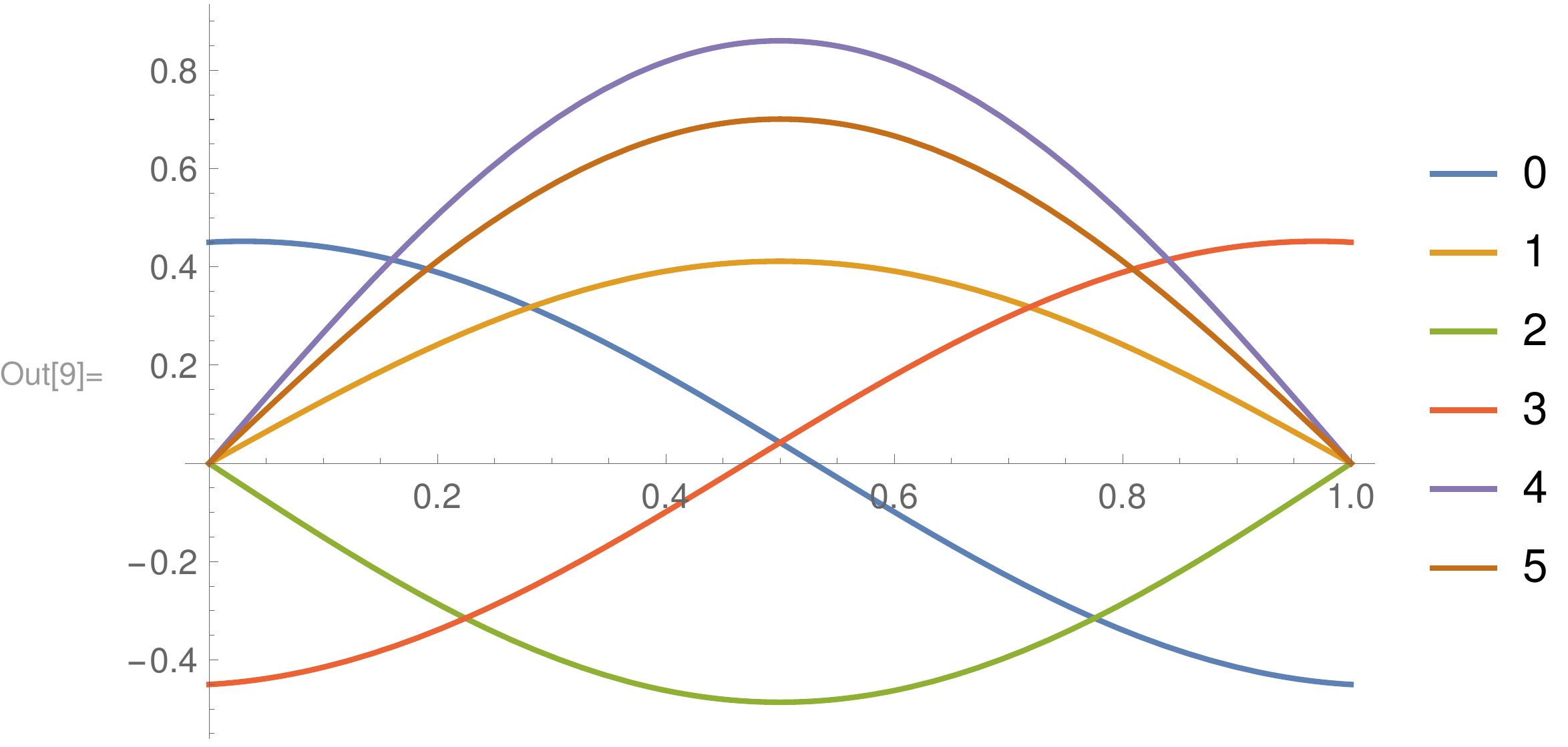}
\caption{A plot of the growth rates $\Re(\VC(\rho_j)/2\pi {\rm i}\tau)$
 of $w^{(j)}(x)$ for $j=0,\dots,5$ where $\arg(\tau)=\pi\th$ and
 $\theta \in (0,\pi)$. The two Galois orbits are $1,2,4$ and $0,3,5$ for the
 number fields of discriminant 49 and~-23. The branches cross at
 $0.$, $0.16$, $0.19$, $0.22$, $0.28$, $0.5$, $0.71$, $0.77$, $0.81$, $0.84$, $1$
 and partition the interval~$[0,1]$ in 10 sectors.}
\label{f.237rates}
\end{figure}

Let $\Phih_\a(h)=\big(\Phih^{(\s_j)}(h)\big)_{j=1}^6$ denote the vector of asymptotic
series, and let $h(\tau)=(h_j(\tau))_{j=0}^5$ denote the vector
of holomorphic functions on $\BC\sm\BR$ with weight $(-1,0,1,-1,-1,-1)$.
As before, if we let $X \to \infty$ in a fixed sector and $\g \in \SL_2(\BZ)$,
we can fit the asymptotic expansion of the vector $h|_\gamma(X)$ with the
asymptotic series $\Phih_\a(2\pi {\rm i}/(c X+d))$ after multiplication by a~matrix.
There is an additional subtlety which is absent in the case of the $4_1$ and $5_2$
knots, namely the fact that some of the $q$-series $H^{\pm}_j(q)$ are power series
in $q^{1/2}$, which implies that the functions~$h_j(\tau)$ are 2-periodic,
but not 1-periodic. This implies that
the matrices that determine the linear combinations depend on the cosets
of $\Gamma(2)$ in $\SL_2(\BZ)$.

\begin{observation}
 As $X \in \BC\sm\BR$ in a sector near the positive real
 axis and $X \to \infty$, we have
\begin{gather}
\label{eq.h237}
h|_\gamma(X) \sim \rho(\gamma)
\begin{pmatrix}
0 & \hphantom{-}1 & -1 & \hphantom{-}0 & -1 & -1/2 \\
0 & \hphantom{-}1 & \hphantom{-}1 & \hphantom{-}0 & \hphantom{-}0 & \hphantom{-}0 \\
0 & \hphantom{-}2/3 & -2/3 & \hphantom{-}0 & \hphantom{-}4/3 & \hphantom{-}1/6 \\
0 & -1 & \hphantom{-}1 & \hphantom{-}0 & \hphantom{-}1 & -1/2 \\
0 & \hphantom{-}0 & \hphantom{-}0 & -1/2 & -1 & \hphantom{-}0 \\
2 & \hphantom{-}0 & \hphantom{-}0 & -1/2 & -1 & \hphantom{-}0 \\
\end{pmatrix}
\Phih_\a\biggl(\frac{2\pi {\rm i}}{c X+d}\biggr)
\end{gather}
to all orders in $1/X$.
\end{observation}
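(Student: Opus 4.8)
The plan is to deduce the stated asymptotics from three ingredients already available: the factorization of the Andersen--Kashaev state integral in Proposition~\ref{prop.237}, the quantum modularity of the Kashaev invariant of the $(-2,3,7)$ pretzel knot from~\cite{GZ:kashaev}, and a direct saddle-point analysis of the $q$-hypergeometric series $H^\pm_j(q)$ near roots of unity. Writing $h|_\gamma(X)$ in terms of $h_j(\gamma X)$ and letting $X\to\infty$ in a sector near the positive real axis gives $\gamma X\to a/c=\a$, so the assertion is equivalent to a family of asymptotic expansions of the vector $h$ near the rational $\a$, one for each $\gamma$. Because some of the $H^\pm_j$ are power series in $q^{1/2}$, the functions $h_j$ are only $2$-periodic, hence the relevant data depends on the coset of $\gamma$ in $\SL_2(\BZ)/\Gamma(2)\cong S_3$; this is exactly the role of the prefactor $\rho(\gamma)$, so it suffices to treat one coset representative for each $S_3$-class and then transport by $\rho$. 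This is the level-$2$, higher-multiplicity analogue of Observation~\ref{ob.8} for the $4_1$ knot, and the argument mirrors that case.

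First I would establish the ``seed'' expansion of $h(\tau)$ near a fixed rational (say $\a=0$), i.e.\ the behaviour of $H^\pm_j(q)$ as $q\to1$. Using the $\ve$-deformed generating functions that define the $H^\pm_j$ (the $(-2,3,7)$ analogues of~\eqref{eq.41deform} and~\eqref{eq.52deform}, whose precise form is recorded in Appendix~\ref{sub.q237}), one applies the Euler--Maclaurin formula to the lattice sums and then steepest descent. The critical-point equations are the gluing equations of an ideal triangulation of the $(-2,3,7)$ pretzel knot complement, whose solutions are precisely the six boundary-parabolic $\SL_2(\BC)$-representations $\rho_1,\dots,\rho_6$, falling into the two Galois orbits over the cubic field of discriminant $-23$ and the abelian field $\BQ(2\cos(2\pi/7))$ of discriminant $49$. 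By Watson's lemma together with the full stationary-phase expansion, each critical point $\s$ contributes a term proportional to ${\rm e}^{{\rm i}\VC(\rho_\s)/(c^2h)}\Phi^{(\s)}_\a(h)=\Phih^{(\s)}_\a(h)$ with $h=2\pi {\rm i}/(c(cX+d))$, the exponential prefactor being the critical value, which is the complexified volume, matching the normalisation $\Phih_\a(h)={\rm e}^{{\rm i}V/c^2h}\Phi_\a(h)$. The coefficient of $\Phih^{(\s)}_\a$ in $h_j$ is a ratio of $1$-loop (adjoint Reidemeister torsion) factors, hence an algebraic number in the trace field; evaluating these ratios in closed form and checking that the resulting $6\times6$ matrix is the matrix displayed in~\eqref{eq.h237} is then a finite algebraic verification. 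Restricting $X$ to a sector near the positive real axis forces the ordering of the six exponential growth rates to be the one of Figure~\ref{f.237rates}, so no Stokes jump occurs inside the sector and all six contributions appear with the same constant coefficients; the ``all orders in $1/X$'' statement is the all-orders expansion of steepest descent.

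To pass from the seed to general $\gamma$, I would combine Proposition~\ref{prop.237} with the modular invariance of $Z_{(-2,3,7)}(\tau)$ inherited from Faddeev's quantum dilogarithm (and, where needed, the descendant state integrals of~\cite{GKZ:modular}): the bilinear factorization produces functional equations relating $h_j(\tau)$, $h_j(\tau-1)$ and $h_j((\tau-1)/\tau)$, and iterating them together with the $\Gamma(2)$-periodicity generates an $\SL_2(\BZ)$-worth of relations pinning the asymptotics near every rational to the seed case. As in Observation~\ref{ob.8}, the matrix $M$ is then independent of $\gamma$, the entire $\gamma$-dependence residing in $\rho(\gamma)$ and in the cusp label $\a=a/c$ of the $\Phih_\a$ series, because the $1$-loop ratios are cusp-independent. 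Two consistency checks are available and partly determine $M$: the relation $\Ind_{(-2,3,7)}(q)=H^+_1(q)H^-_1(q)$ and the displayed quadratic relation among the $H^\pm_j$ translate, via the asymptotics, into the quadratic identity $\sum_\s\Phi^{(\s)}(x)\Phi^{(\s)}(-x)=0$ and constrain $M$ to intertwine the bilinear pairing implicit in Proposition~\ref{prop.237} with the diagonal pairing on the $\Phi^{(\s)}$ side; a detailed treatment of this $6\times6$ case is carried out in~\cite{237}.

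The \textbf{main obstacle} is the multidimensional steepest-descent step: rigorously deforming the triple summation lattice so that the sum splits as a sum over the six critical points with no leftover contributions in the stated sector, and, on the algebraic side, computing the $1$-loop torsion ratios in closed form and matching them to the precise entries $\{-1,-\tfrac12,0,\tfrac16,\tfrac23,\tfrac43,\dots\}$ of $M$ --- at present these are known only to the accuracy of the $108$-coefficient numerical experiment, so turning the Observation into a theorem requires an independent derivation of exactly those numbers. A secondary subtlety is that $h_1$ and $h_2$ sit in higher $\ve$-degree and their generating functions involve the quasimodular series $\calE_2(q)$, whose $1/\tau$ anomaly near rationals must be tracked carefully to avoid spurious corrections to the corresponding rows of $M$.
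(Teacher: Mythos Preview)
The paper does not give a proof of this Observation at all; it is presented as an empirical finding. The text immediately preceding and following the statement makes clear that the matrix in~\eqref{eq.h237} was obtained by numerically computing the $h_j$ and the $\Phih^{(\s_j)}$ to high precision, fitting a rational matrix, and cross-checking against the Kashaev asymptotics (the paper reports computing $37$ terms of the six $\Phi^{(\s)}(h)$ this way). No analytic argument is offered, and in Appendix~\ref{sub.ZwegersIdentity} the paper sketches the steepest-descent method only for the $4_1$ case, explicitly stating that it has not carried it out even for~$5_2$.

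Your proposal, by contrast, is an outline of how one would \emph{prove} the statement rigorously: saddle-point analysis of the $q$-hypergeometric sums in Appendix~\ref{sub.q237} to get the seed asymptotics at one cusp, then transport via the state-integral factorization and the $\Gamma(2)$ structure to all~$\gamma$. This is the natural strategy and is consistent with what the paper does for~$4_1$, but it is a genuinely different (and far more ambitious) route than the paper's numerical fitting. The obstacles you name at the end are exactly the ones that make this an open problem: the contour deformation for the triple Nahm sums with the half-integer-power $q^{1/2}$ terms has not been carried out, and the specific rational entries of the matrix have not been derived from torsion computations but only recognized numerically. Your consistency checks via the index and the quadratic relation constrain~$M$ but do not determine it, so they cannot replace the missing computation. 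In short, your outline is sound as a plan, but it remains a plan; the paper itself claims nothing more than a numerical observation here, and your ``main obstacle'' paragraph is an accurate description of what would be needed to upgrade it to a theorem.
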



Inverting the matrix in equation~\eqref{eq.h237}, allows one to define
holomorphic lifts $w^{(\s)}$ in $\BC\sm\BR$ of the asymptotic series
$\Phi^{(\s)}(h)$. This gives a practical method for computing the coefficients
of the 6 asymptotic series $\Phi^{(\s)}(h)$. Indeed, a numerical computation
of the series $w^{(\s)}$ at cusps and the Galois invariant of the series
$\Phi^{(\s)}(h)$ reduces the computation of their coefficient to the
recognition of rational numbers with prescribed denominators. We used this
method to compute~37 terms of the six $\Phi^{(\s)}(h)$, and to compare the
results with the asymptotics of the Kashaev invariant in~\cite{GZ:kashaev}.


\section[From vector-valued to matrix-valued q-series]{From vector-valued to matrix-valued $\boldsymbol{q}$-series}
\label{sec.matrix}

So far, we used the state integral of a knot to define a vector
of $q$-series for $|q| \neq 1$ whose asymptotics were found to be related to the
$r$-vector of asymptotic series of the knot from our earlier paper~\cite{GZ:kashaev}.
In this section, we report a recent discovery, descendants, which places the vector as
the first column of an invertible $r$ by $r$ matrix of $q$-series for $|q| \neq 1$.
It turns out that asymptotic series~\cite{GZ:kashaev}, $q$-series and state
integrals~\cite{GGM, GGM:peacock} all have descendants. We will explain the
notion of descendants in Section~\ref{sub.4.1} for the $4_1$ knot, where there will be
infinitely many descendants $G_0^{(m)}(q)$ and $G_1^{(m)}(q)$ (Laurent
series in $q$ with integer coefficients) with $m$ ranging over~$\Z$,
and then we will construct the matrix $Q(q)$ whose second column is
$\frac{1}{2}\big(q G_j^{(1)} - q^{-1} G^{(-1)}_j\big)$ for $j=0,1$. (We will explain in
Section~\ref{sub.cocycles} below why we choose this particular linear combination.)
In Section~\ref{sub.41dasy} we discuss the asymptotic properties of these descendants,
and in Section~\ref{sub.52blocks} we state the analogous results for the $5_2$~knot.

\subsection[Descendant q-series]{Descendant $\boldsymbol{q}$-series} \label{sub.4.1}

In this section, we will focus on the $4_1$ knot following the work of
the first author, Gu and Mari\~{n}o~\cite{GGM} (with detailed proofs provided
in~\cite[Section~3.1]{GGM:peacock}) but using a slightly
different notation. Consider the pair $G_0^{(m)}(q)$ and
$G_1^{(m)}(q)$ of $q$-series from~\cite{GGM} for integers $m$
\begin{gather*}
G^{(m)}_0(q) =\sum_{n=0}^\infty (-1)^n \frac{q^{n(n+1)/2+m n}}{(q;q)_n^2},
\\
G^{(m)}_1(q) =\sum_{n=0}^\infty (-1)^n \frac{q^{n(n+1)/2+m n}}{(q;q)_n^2}
\Bigg(2m+ \calE_1(q) + 2 \sum_{j=1}^n \frac{1+q^j}{1-q^j} \Bigg) ,
\end{gather*}
for $|q|<1$ and extended to $|q|>1$ by $G^{(m)}_j\big(q^{-1}\big) = (-1)^j G^{(m)}_j(q)$.
Observe that $G^{(0)}_j(q)=G_j(q)$ for $j=0,1$, with $G_0(q)$ and $G_1(q)$ given
in~\eqref{eq.3g} and~\eqref{eq.G}, respectively. Consider the matrix
\begin{equation*}
w_m(q) =
\begin{pmatrix} G^{(m)}_0(q) & G^{(m)}_1(q) \\ G^{(m+1)}_0(q) & G^{(m+1)}_1(q)
\end{pmatrix}, \qquad |q| \neq 1 .
\end{equation*}

The properties of these functions are given in~\cite[Section~3.1]{GGM:peacock}.

\begin{Theorem*}[\cite{GGM:peacock}]
The matrix $w_m(q)$ is a fundamental solution of the linear $q$-difference
equation%
\begin{equation}
\label{41qdiff}
y_{m+1}(q) -(2-q^m) y_m(q) + y_{m-1}(q)=0, \qquad m \in \BZ .
\end{equation}
It has constant determinant
 \begin{equation}
\label{det41}
\det(w_m(q))=2
\end{equation}
and satisfies the symmetry and orthogonality properties
\begin{align*}
& w_m\big(q^{-1}\big)= w_{-m}(q)
 \begin{pmatrix} 1 & \hphantom{-}0 \\ 0 & -1 \end{pmatrix} ,
\\
&\frac{1}{2} w_m(q)
\begin{pmatrix} 0 & 1 \\ 1 & 0 \end{pmatrix}
w_{m}\big(q^{-1}\big)^T=
\begin{pmatrix} \hphantom{-}0 & 1 \\ -1 & 0 \end{pmatrix} 
\end{align*}
for all integers $m$ and for $|q| \neq 1$.
\end{Theorem*}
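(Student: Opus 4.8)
The plan is to prove the four assertions in turn, each of which reduces to a short manipulation of the defining $q$-hypergeometric sums once set up correctly.

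\emph{The $q$-difference equation.} Write $t_n^{(m)}=(-1)^n q^{n(n+1)/2+mn}/(q;q)_n^2$, so that $G^{(m)}_0(q)=\sum_{n\ge0}t_n^{(m)}$ and $t_n^{(m\pm1)}=q^{\pm n}t_n^{(m)}$. Then the left side of~\eqref{41qdiff} with $y_m=G_0^{(m)}$ equals $\sum_{n\ge0}(q^n+q^{-n}-2+q^m)\,t_n^{(m)}$; using $q^n+q^{-n}-2=(1-q^n)^2q^{-n}$, one cancels $(1-q^n)^2$ against $(q;q)_n^{-2}$, and the resulting sum $\sum_{n\ge1}(-1)^n q^{n(n-1)/2+mn}/(q;q)_{n-1}^2$, after the shift $n\mapsto n+1$, is $-q^m G_0^{(m)}(q)$, cancelling the remaining $q^m G_0^{(m)}$ term. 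For $y_m=G_1^{(m)}$ I would write $G_1^{(m)}=2m\,G_0^{(m)}+U^{(m)}$ with $U^{(m)}=\sum_n s_n t_n^{(m)}$ and $s_n=\calE_1(q)+2\sum_{j=1}^n\frac{1+q^j}{1-q^j}$ \emph{independent of~$m$}. Applying the recursion operator, the $2m\,G_0^{(m)}$ piece contributes $2m$ times the (already proven) $G_0$-relation together with the remainder $2\bigl(G_0^{(m+1)}-G_0^{(m-1)}\bigr)$, while the $U^{(m)}$ piece, after the same cancellation, the identity $s_{n+1}-s_n=2\frac{1+q^{n+1}}{1-q^{n+1}}$, and the auxiliary identity $q^m\sum_{n\ge0}\frac{1+q^{n+1}}{1-q^{n+1}}t_n^{(m)}=G_0^{(m+1)}-G_0^{(m-1)}$ (itself proved by the same telescoping and the $G_0$-relation), collapses to $-2\bigl(G_0^{(m+1)}-G_0^{(m-1)}\bigr)$; the two remainders cancel. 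Equivalently one may realize $G_1^{(m)}$ as the $\ve$-linear coefficient of a one-parameter deformation of $G_0^{(m)}$ satisfying a $\ve$-deformed three-term recursion, and differentiate at $\ve=0$, as in~\cite{GGM:peacock}. I expect this step, in particular the telescoping for the weight-one row, to be the main technical point; the rest is essentially linear algebra.

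\emph{The determinant.} Both columns of $w_m$ solve~\eqref{41qdiff}, and the coefficients of $y_{m+1}$ and of $y_{m-1}$ in~\eqref{41qdiff} are both $1$; hence the Casoratian $W_m:=\det w_m(q)=G_0^{(m)}G_1^{(m+1)}-G_1^{(m)}G_0^{(m+1)}$ is independent of~$m$: substituting $G^{(m-1)}_j=(2-q^m)G^{(m)}_j-G^{(m+1)}_j$ into $W_{m-1}$ makes the $(2-q^m)$-terms cancel, leaving $W_m$. It therefore suffices to evaluate $W_m$ in the limit $|q|<1$, $m\to+\infty$, where $G_0^{(m)}\to1$, $U^{(m)}\to\calE_1(q)$, and $\det w_m=2\,G_0^{(m)}G_0^{(m+1)}+G_0^{(m)}U^{(m+1)}-U^{(m)}G_0^{(m+1)}\to 2$; since for each fixed $q$ the sequence $(W_m(q))_m$ is constant, it equals its limit, which proves~\eqref{det41}.

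\emph{Symmetry and orthogonality.} The symmetry relation is immediate from the way the $G^{(m)}_j$ are continued past $|q|=1$: concretely $(q^{-1};q^{-1})_n=(-1)^n q^{-n(n+1)/2}(q;q)_n$ gives $q^{n(n+1)/2+mn}/(q;q)_n^2=(q^{-1})^{n(n+1)/2-mn}/(q^{-1};q^{-1})_n^2$, which together with the oddness of $\frac{1+q^j}{1-q^j}$ and, by the stated convention, of $\calE_1(q)$ under $q\mapsto q^{-1}$, yields the reciprocity of $G_0^{(m)}$ and $G_1^{(m)}$ with signs $+$ and $-$; assembling the two columns of $w_m$, and keeping track of the induced re-ordering of its rows, produces the displayed identity $w_m(q^{-1})=w_{-m}(q)\,\diag(1,-1)$. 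Finally, the orthogonality is a formal consequence of this symmetry and~\eqref{det41}: after substituting $w_m(q^{-1})^{T}=\diag(1,-1)\,w_{-m}(q)^{T}$ on the left, and using $\bigl(\begin{smallmatrix}0&1\\1&0\end{smallmatrix}\bigr)\diag(1,-1)=\bigl(\begin{smallmatrix}0&-1\\1&0\end{smallmatrix}\bigr)$, the left side becomes a product whose four entries are, up to sign, the $2\times2$ minors occurring in $\det w_m$: the two with equal indices vanish by antisymmetry and the other two equal $\pm\det w_m=\pm2$, giving the claimed constant antisymmetric matrix. Tracking the precise signs and the row-index correspondence between $w_m$ and $w_{-m}$ is the only remaining bookkeeping.
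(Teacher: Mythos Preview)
The paper does not give its own proof of this theorem; it is quoted verbatim from~\cite{GGM:peacock}. So there is nothing to compare against, and your write-up is in effect a replacement for the missing argument. Your proofs of the first two assertions are correct and complete: the telescoping for $G_0^{(m)}$ is clean, the auxiliary identity $q^m\sum_{n\ge0}\frac{1+q^{n+1}}{1-q^{n+1}}\,t_n^{(m)}=G_0^{(m+1)}-G_0^{(m-1)}$ does hold (shift $n\mapsto n+1$ after cancelling one factor of $1-q^n$), and the Casoratian evaluation via $m\to+\infty$ is valid because the constancy in~$m$ is already established before taking the limit.

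The part you flag as ``only remaining bookkeeping'' is less innocent than you suggest and is worth carrying out explicitly. Your reciprocity computation gives $G_j^{(m)}(q^{-1})=(-1)^j G_j^{(-m)}(q)$, which sends the row-index pair $(m,m+1)$ of $w_m(q^{-1})$ to $(-m,-m-1)$; these are the rows of $w_{-m-1}(q)$ in swapped order, not of $w_{-m}(q)$. On the other hand, the text just before the theorem \emph{declares} the extension to $|q|>1$ by $G_j^{(m)}(q^{-1}):=(-1)^j G_j^{(m)}(q)$ with no change of~$m$, which gives instead $w_m(q^{-1})=w_m(q)\,\diag(1,-1)$. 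Under either reading the displayed symmetry $w_m(q^{-1})=w_{-m}(q)\,\diag(1,-1)$ does not follow on the nose, and with the second reading your determinant argument yields $\frac12\,w_m(q)\bigl(\begin{smallmatrix}0&1\\1&0\end{smallmatrix}\bigr)w_m(q^{-1})^T=\bigl(\begin{smallmatrix}0&-1\\1&0\end{smallmatrix}\bigr)$, i.e.\ the transpose of the matrix displayed in the theorem. (Compare the $5_2$ analogue later in the paper, equation~\eqref{WWT52b}, where the indices on the two factors are indeed shifted relative to one another.) So your method is sound and does produce a constant antisymmetric matrix via $\det w_m=2$, but you should spell out the index shift and the resulting sign rather than leaving them as bookkeeping; doing so will either confirm or correct the precise form of the two displayed identities.
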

The descendant series $G^{(m)}_j(q)$ arise from a factorization
of the ``descendant state integral''
\begin{equation*}
Z_{4_1,m,m'}(\tau) =
\int_{\BR+{\rm i} 0} \Phi_{\sqrt{\tau}}(v)^2 {\rm e}^{-\pi {\rm i} v^2
+ 2\pi(m \tau^{1/2} - m' \tau^{-1/2})v} {\rm d} v, \qquad
m, m' \in \BZ
\end{equation*}
introduced in~\cite{GGM}. This is a holomorphic function of $\tau \in \BC'$ that
coincides with $Z_{4_1}(\tau)$ when $m=m'=0$ and can be expressed bilinearly
in terms of $G^{(m)}_j(q)$ and $G^{(m')}_j(\tq)$ as follows~\cite[equation~(69)]{GGM}:
\begin{equation*}
 Z_{4_1,m,m'}(\tau)  =
 (-1)^{m-m'+1} \frac {\rm i}2 q^{\frac{m}{2}+\frac1{24}}\tq^{\frac{m'}2-\frac1{24}}
 \biggl(\sqrt{\tau} G^{(m')}_0(\tq) G^{(m)}_1(q) - \frac{1}{\sqrt{\tau}}
 G^{(m')}_1(\tq) G^{(m)}_0(q)\biggr) .
\end{equation*}
(Here $\widetilde q=\mathbf{e}(-1/\t)$ as usual.) This implies that the matrix-valued function
\begin{align}
 \label{W410-desc}
 W_{m,m'}(\tau)  =  \big(w_{m'}(\tq)^T\big)^{-1}
 \begin{pmatrix} 1/\t & 0 \\ 0 & 1 \end{pmatrix} w_m(q)^T ,
\end{align}
which is originally defined only for $\t\in\BC\sm\BR$, extends holomorphically to
$\tau \in \BC'$ for all integers~$m$ and $m'$. A similar story of descendants
for the $5_2$ knot was given in~\cite[Section~4.1k]{GGM:peacock}, and will
be reproduced in Section~\ref{sub.52blocks} below.

\subsection{The asymptotics of the descendants}
\label{sub.41dasy}

In~\cite{GZ:kashaev}, studying the refined quantum modularity conjecture for the
$4_1$ knot, we found a 2 by 2 matrix of asymptotic series
\begin{gather*}
\widehat{\bf \Phi}(x)=
\begin{pmatrix}
 \Phih(x) & \hphantom{-}\Psih(x) \\
 {\rm i} \Phih(-x) & -{\rm i} \Psih(-x)
\end{pmatrix},
\end{gather*}
where $\Psih(x)={\rm e}^{C/h} \Psi(x)$ where $\Psi(x)$ is the series
\begin{gather*}
\Psi(x) = \sum_{j=0}^\infty B_j x^j , \qquad
B_j =  {\rm i} \frac{\sqrt[4]3}{2} \biggl(\frac{1}{72\sqrt{-3}}\biggr)^j \frac{b_j}{j!}
\end{gather*}
with $b_j\in\BQ$, the first values being given by
\begin{center}
\def\arraystretch{1.3}
\begin{tabular}{|c|c|c|c|c|c|c|c|}\hline
$j$ & $0$ & $1$ & $2$ & $3$ & $4$ & $5$ & $6$ 
\\ \hline
$b_j$ & $-1$ & $37$ & $1511$ & $1211729/5$
& $407317963/5$
& $331484358355/7$
& $1471507944921541/35$
\\ \hline
\end{tabular}
\end{center}
Naturally, we looked into the asymptotics of its descendant holomorphic blocks.
Since any three consecutive are related by the recursion~\eqref{41qdiff}, so
are their asymptotics. For consistency, and for symmetry, we looked into the
asymptotics of the descendant holomorphic blocks for $m=-1,0,1$. Naturally,
we expected that the series $\Psih$ as well as the series $\Phih$ would
show up, and indeed we found the following
asymptotics for the matrix of $q$-series defined by
\begin{gather}
\label{Q41}
Q(\t) =  w_0(q)^T\begin{pmatrix}1&{-\tfrac12}\\0&\hphantom{-}1\end{pmatrix}  =
\begin{pmatrix}
G_0^{(0)}(q) & \frac12\bigl(G_0^{(1)}(q) - G^{(-1)}_0(q)\bigr)\\
G_1^{(0)}(q) & \frac12\bigl(G_1^{(1)}(q) - G^{(-1)}_1(q)\bigr)
\end{pmatrix},
\qquad q=\mathbf{e}(\t).
\end{gather}

\begin{observation}
\label{ob.Qmat}
As $\tau \to 0$ in the upper half-plane, we have
\[
\begin{pmatrix}
1/\sqrt{\tau} & 0 \\ 0 & \sqrt{\tau}
\end{pmatrix} Q(\tau)  \sim
\begin{pmatrix}
1 & -1 \\ 1 & \hphantom{-}1
\end{pmatrix} \widehat{\bf \Phi}(2\pi {\rm i} \tau) .
\]
\end{observation}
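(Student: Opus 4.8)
The plan is to bootstrap from the three already-established (or earlier-cited) ingredients: Observations~\ref{ob.1} and~\ref{ob.4} giving the $\tau\to0$ asymptotics of $g_0$ and $g_1$ in terms of $\Phih(\pm2\pi {\rm i}\tau)$; the recursion~\eqref{41qdiff} satisfied by the descendants $G^{(m)}_j$; and the corresponding recursion satisfied by the matrix $\widehat{\mathbf\Phi}(x)$ of asymptotic series, together with the appearance of the second series $\Psih$. First I would record that the first column of $Q(\tau)$ is just $w_0(q)^T$ restricted to its first column, i.e.\ $\bigl(\smallmatrix G_0(q)\\ G_1(q)\endsmallmatrix\bigr)$, so the first-column statement of Observation~\ref{ob.Qmat} is exactly equation~\eqref{ggas} rewritten: $\bigl(\smallmatrix \tau^{-1/2}g_0\\ \tau^{1/2}g_1\endsmallmatrix\bigr)\sim\bigl(\smallmatrix 1&\phantom{-}1\\1&-1\endsmallmatrix\bigr)\bigl(\smallmatrix \Phih(2\pi {\rm i}\tau)\\ {\rm i}\Phih(-2\pi {\rm i}\tau)\endsmallmatrix\bigr)$, which matches the first column of $\bigl(\smallmatrix 1&-1\\1&\phantom{-}1\endsmallmatrix\bigr)\widehat{\mathbf\Phi}(2\pi {\rm i}\tau)$ since the first column of $\widehat{\mathbf\Phi}$ is $\bigl(\smallmatrix\Phih(x)\\ {\rm i}\Phih(-x)\endsmallmatrix\bigr)$. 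So only the \emph{second} column requires genuine work, namely that $\tfrac12\bigl(G^{(1)}_j(q)-G^{(-1)}_j(q)\bigr)$, after the weight rescaling, is asymptotic to the corresponding entry of $\bigl(\smallmatrix 1&-1\\1&\phantom{-}1\endsmallmatrix\bigr)\widehat{\mathbf\Phi}(2\pi {\rm i}\tau)$, i.e.\ to $-\Psih(2\pi {\rm i}\tau)-{\rm i}\Psih(-2\pi {\rm i}\tau)$ in the top row and $-\Psih(2\pi {\rm i}\tau)+{\rm i}\Psih(-2\pi {\rm i}\tau)$ in the bottom row (up to the sign/normalization conventions fixed in the definitions of $\widehat{\mathbf\Phi}$ and $\Psih$, which I would pin down carefully at the outset).

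The core of the argument is the same Euler--Maclaurin / saddle-point analysis that proves Observations~\ref{ob.1} and~\ref{ob.4}, now applied to the one-parameter family $G^{(m)}_j$. The cleanest route is to run the asymptotic analysis directly on the descendant state integral $Z_{4_1,m,m'}(\tau)$ (or on the Gaussian-type sum $G^{(m)}_0(q)=\sum_n(-1)^n q^{n(n+1)/2+mn}/(q;q)_n^2$): for fixed $m$ and $\tau\to0$, the extra factor $q^{mn}={\rm e}^{2\pi {\rm i} m n\tau}$ contributes a smooth, non-exponential perturbation to the integrand near the saddle, so the leading exponential $\mathrm e^{{\rm i} V/h}$ is unchanged and one obtains $g^{(m)}_j(\tau)\sim$ (algebraic prefactor)$\cdot\mathrm e^{\pm {\rm i} V/(2\pi {\rm i}\tau)}\cdot(\text{power series in }\tau)$, i.e.\ a linear combination of $\Phih(\pm2\pi {\rm i}\tau)$ and a \emph{new} pair of series which one then identifies with $\Psih(\pm2\pi {\rm i}\tau)$. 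Rather than re-derive the saddle-point expansion from scratch for each $m$, I would exploit linearity: define $\psi^{(\pm)}(\tau)$ to be the $m$-dependent parts extracted from $g^{(m)}_0,g^{(m)}_1$, show via the recursion~\eqref{41qdiff} that the asymptotic series of $g^{(m)}_j$ for all $m$ lie in the $2$-dimensional span over $\BC[[\tau]]$ (or over the relevant completed-series ring) of $\{\Phih(2\pi {\rm i}\tau),\Psih(2\pi {\rm i}\tau)\}$ in the $+$ half and $\{\Phih(-2\pi {\rm i}\tau),\Psih(-2\pi {\rm i}\tau)\}$ in the $-$ half — this is forced because $\widehat{\mathbf\Phi}(x)$ itself satisfies the same three-term recursion in the descendant index and has determinant a nonzero constant, so its columns give a fundamental system. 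Concretely: the vector $v_m(\tau):=\bigl(\smallmatrix \tau^{-1/2}g^{(m)}_0\\ \tau^{1/2}g^{(m)}_1\endsmallmatrix\bigr)$ satisfies, up to the known rescaling, a three-term recursion whose characteristic matrix matches that of the descendant $\widehat{\mathbf\Phi}$-system; matching the two solutions of this recursion at $m=0$ (Observation~\ref{ob.1}, \ref{ob.4}) and at one further value of $m$ (or matching $v_0$ and $v_0-v_{-1}$, say) pins down the asymptotics of $v_1-v_{-1}$ uniquely, and a short linear-algebra computation shows it equals the second column of $\bigl(\smallmatrix 1&-1\\1&\phantom{-}1\endsmallmatrix\bigr)\widehat{\mathbf\Phi}(2\pi {\rm i}\tau)$.

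Technically, there is a subtlety I would flag: one cannot quite conclude ``$v_1-v_{-1}$ has the stated asymptotics'' purely formally from the recursion, because the recursion~\eqref{41qdiff} has coefficient $2-q^m$ which $\to1$ as $\tau\to0$, so the recursion \emph{degenerates} in the limit and does not by itself separate the two exponential scales $\mathrm e^{{\rm i} V/h}$ and $\mathrm e^{-{\rm i} V/h}$ (nor distinguish $\Phih$ from $\Psih$ within one scale). The honest input must therefore be an \emph{actual} asymptotic computation for at least one descendant beyond $m=0$ — most naturally $m=1$, where $G^{(1)}_0(q)=\sum_n(-1)^n q^{n(n+1)/2+n}/(q;q)_n^2=\sum_n(-1)^n q^{n(n+3)/2}/(q;q)_n^2$. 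The expected main obstacle is thus the bookkeeping in this saddle-point expansion: showing that the sub-leading series produced by the $q^{mn}$ insertion is \emph{exactly} $\Psi$ (with the precise algebraic normalization of $B_j$ recorded in the table, including the factor ${\rm i}\tfrac{\sqrt[4]3}{2}$ and the $(72\sqrt{-3})^{-j}/j!$), not merely ``some'' second series in the $2$-dimensional space. This identification is where the trace-field arithmetic enters, and where I would lean on the factorization~\eqref{W410-desc} of $W_{m,m'}$ together with the known holomorphy of the descendant state integrals on $\BC'$, exactly as in the $m=m'=0$ case treated in Section~\ref{sub.convergent}, to force the coefficients to be the claimed algebraic numbers. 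Everything else — periodicity of $g^{(m)}_j$, the determinant~\eqref{det41}, the symmetry $w_m(q^{-1})=w_{-m}(q)\,\mathrm{diag}(1,-1)$ — is already available and enters only to check signs and to confirm that the $+$-half and $-$-half expansions are consistent under $\tau\mapsto-\tau$.
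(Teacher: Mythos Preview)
The paper does not actually give a proof of Observation~\ref{ob.Qmat}; like most of the ``Observations'' in this paper, it is presented as an empirically discovered fact. The text preceding it says only that the authors ``looked into the asymptotics of the descendant holomorphic blocks for $m=-1,0,1$'' and ``found the following asymptotics''---i.e.\ the statement was obtained by numerically computing the asymptotics of $G^{(m)}_j$ and matching against the series $\Phih$ and $\Psih$ already known from~\cite{GZ:kashaev}. Your proposal is therefore attempting something strictly more ambitious than what the paper does.

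Your reduction is correct: the first column of the claimed asymptotic is literally equation~\eqref{ggas} (Observations~\ref{ob.1} and~\ref{ob.4}), and only the second column---the combination $\tfrac12\bigl(G^{(1)}_j-G^{(-1)}_j\bigr)$---needs new input. You also correctly identify the real obstacle: the recursion~\eqref{41qdiff} degenerates as $\tau\to0$, so it cannot by itself separate $\Phih$ from $\Psih$ within a fixed exponential scale, and one needs an honest asymptotic computation for at least one $m\ne0$.

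The genuine gap is in your final identification step. A saddle-point analysis of $G^{(1)}_0$ (or of $G^{(1)}_0-G^{(-1)}_0$) will produce \emph{some} completed series in $\mathrm e^{\pm {\rm i} V/h}\,\BC[[h]]$, but the series $\Psih$ is not defined in this paper: it is imported from~\cite{GZ:kashaev}, where it arises from a completely different construction (the asymptotics of a second Habiro-like function on~$\BQ$ appearing in the refined quantum modularity conjecture). Your proposed device---holomorphy of the descendant state integral $W_{m,m'}$ on~$\BC'$, together with $\det(w_m)=2$---does not close this gap. Those constraints are bilinear or determinantal in the pair $(\Phi,\Psi)$ and are satisfied by a one-parameter family of candidate second columns (replace $\Psi$ by $\Psi+c\,\Phi$); they do not single out the specific $\Psi$ of~\cite{GZ:kashaev}. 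To turn this into a proof you would need an independent bridge between the two constructions, which neither the present paper nor your proposal supplies, and which in the paper's framework is precisely what Theorem~\ref{thm.1} (the agreement of the two cocycles) provides at the level of \emph{functions on~$\BQ$}, not at the level of asymptotic series.
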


Note that equation~\eqref{det41} implies that $\det(Q(\tau))=2$
for all $\tau$, and combined with the above, it follows that
the function $\widehat{\bf \Phi}(x)$ satisfies
\begin{gather*}
\det(\widehat{\bf \Phi}(x))=1
\end{gather*}
as well as the orthogonality property
\begin{gather*}
\label{phiorthog}
\widehat{\bf \Phi}(-x) \widehat{\bf \Phi}(x)^t =
\begin{pmatrix} 0 & {\rm i} \\ {\rm i} & 0 \end{pmatrix} .
\end{gather*}

\subsection[The case of the 5\_2 knot]{The case of the $\boldsymbol{5_2}$ knot} 
\label{sub.52blocks}

Consider the linear $q$-difference equation
\begin{equation}
 \label{52qdiffp}
 y_{m}(q) -3y_{m+1}(q)+\big(3-q^{2+m}\big) y_{m+2}(q) - y_{m+3}(q)=0, \qquad m \in \BZ ,
\end{equation}
In~\cite[Section~3.2]{GGM:peacock}, it was shown that
it has a fundamental solution sets given by the columns of the
following matrix
\begin{gather}
\label{Jqred52}
w_{m}(q)  =  w_{m}^{5_2}(q)  =
\begin{pmatrix}
H^{(m)}_{0}(q) & H^{(m+1)}_{0}(q) & H^{(m+2)}_{0}(q) \\
H^{(m)}_{1}(q) & H^{(m+1)}_{1}(q) & H^{(m+2)}_{1}(q) \\
H^{(m)}_{2}(q) & H^{(m+1)}_{2}(q) & H^{(m+2)}_{2}(q)
\end{pmatrix},
\qquad m\in\Z,\qquad |q| \neq 1,
\end{gather}
where for $|q|<1$
\begin{gather*}
H^{(m)}_{0}(q)
 =\sum_{n=0}^\infty \frac{q^{n(n+1)+nm}}{(q;q)_n^3} ,
\\
 H^{(m)}_{1}(q)
 =\sum_{n=0}^\infty \frac{q^{n(n+1)+nm}}{(q;q)_n^3}
 \big(1+2n+m-3 \calE_1^{(n)}(q)\big) ,
 \\
 H^{(m)}_{2}(q)
 =\sum_{n=0}^\infty \frac{q^{n(n+1)+nm}}{(q;q)_n^3}
\bigg((1+2n+m-3 \calE_1^{(n)}(q))^2-3\calE_2^{(n)}(q)-\frac{1}{6}\calE_2(q)\bigg) ,
\end{gather*}
and
\begin{gather*}
 H^{(-m)}_{0}\big(q^{-1}\big)
 =\sum_{n=0}^\infty (-1)^n\frac{q^{\frac{1}{2}n(n+1)+nm}}{(q;q)_n^3} ,
\\
 H^{(-m)}_{1}\big(q^{-1}\big)
 =\sum_{n=0}^\infty (-1)^n\frac{q^{\frac{1}{2}n(n+1)+nm}}{(q;q)_n^3}
 \bigg(\frac{1}{2}+n+m-3\calE_1^{(n)}(q)\bigg) ,
 \\
 H^{(-m)}_{2}\big(q^{-1}\big)
 =\sum_{n=0}^\infty (-1)^n\frac{q^{\frac{1}{2}n(n+1)+nm}}{(q;q)_n^3}
 \bigg(\bigg(\frac{1}{2}+n+m-3\calE_1^{(n)}(q)\bigg)^2-3\calE_2^{(n)}(q)
 -\frac{1}{12}\calE_2(q)\bigg)
\end{gather*}
with $\calE^{(n)}_k(q)$ defined in equation~\eqref{eq.Emell} below.
Note that when $m=0$, $H_j^{(0)}(q^{\pm 1})=H^\pm_j(q)$ where $H^\pm_j(q)$ are the
six $q$-series of the $5_2$ knot~\eqref{eq.52deform} that appear in the
factorization of its state-integral.

\begin{Theorem*}[\cite{GGM:peacock}]
 The function $w_m(q)$ defined by~\eqref{Jqred52} is a fundamental solution of the
 linear $q$-difference equation~\eqref{52qdiffp} that has constant determinant
\begin{equation*}
 \det(w_m(q))=2 ,
\end{equation*}
satisfies
the orthogonality property
\begin{equation}
 \label{WWT52b}
 \frac{1}{2} w_{m-1}(q)
 \begin{pmatrix} 0 & 0 & 1 \\ 0&2&0\\ 1 & 0 &0 \end{pmatrix}
 w_{-m-1}\big(q^{-1}\big)^T =
 \begin{pmatrix} 1 & 0 & 0 \\ 0 & 0 & 1\\0 & 1 &  3-q^{m} \end{pmatrix}
\end{equation}
as well as
\begin{equation*}
 \frac{1}{2} w_m(q)
 \begin{pmatrix} 0 & 0 & 1 \\ 0&2&0\\ 1 & 0 &0 \end{pmatrix}
 w_{\ell}\big(q^{-1}\big)^T \in \SL\big(3,\BZ\big[q^\pm\big]\big)
\end{equation*}
for all integers $m$, $\ell$ and for $|q| \neq 1$.
\end{Theorem*}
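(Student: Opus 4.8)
The three assertions -- the $q$-difference equation, the value of the determinant, and the (anti-)orthogonality relations -- can be attacked in that order, the last being the substantial one. For the \emph{$q$-difference equation} I would check directly from the explicit sums that each of the sequences $m\mapsto H_0^{(m)}(q),\,H_1^{(m)}(q),\,H_2^{(m)}(q)$ solves \eqref{52qdiffp}. For $H_0^{(m)}(q)=\sum_n q^{n(n+1)+nm}/(q;q)_n^3$ this is one line: substituting into the left side of \eqref{52qdiffp} the $n$-th summand becomes $\frac{q^{n(n+1)+nm}}{(q;q)_n^3}\bigl((1-q^n)^3-q^{2+m+2n}\bigr)$, and using $(1-q^n)^3/(q;q)_n^3=1/(q;q)_{n-1}^3$ together with the substitution $n\mapsto n+1$ in the first piece shows the two pieces coincide, so the whole sum vanishes. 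For $H_1^{(m)}$ and $H_2^{(m)}$ I would not argue summand by summand but instead recognise $\bigl(H_0^{(m)},H_1^{(m)},H_2^{(m)}\bigr)$ as the first three $\ve$-Taylor coefficients of the deformed $q$-hypergeometric sum of \eqref{eq.52deform} (with $q{\rm e}^{\ve}$ inside the Pochhammer symbols, the compensating prefactor, and ${\rm e}^{(2m+1)\ve}$ in the numerator): the deformed sum satisfies the $\ve$-dependent version of \eqref{52qdiffp}, and expanding in $\ve$ gives the recursions for all three at once. The only non-formal point is that the $\tfrac16\calE_2(q)H_0$- and $\bigl(\tfrac14-\tfrac16\calE_2(q)\bigr)H_0$-type normalisation shifts built into \eqref{eq.52deform} are compatible with the recursion, which is a finite computation with the logarithmic $q$-derivatives of $(q;q)_\infty$. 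The same telescoping with $q$ replaced by $q^{-1}$ shows the columns of $w_m(q)$ also solve the corresponding equation with $q\mapsto q^{-1}$; linear independence of the columns (hence ``fundamental solution'') follows once the determinant is shown to be nonzero.

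For the \emph{determinant}: since the three columns of $w_m(q)^T$ are solutions of \eqref{52qdiffp} whose extreme coefficients are $+1$ and $-1$, the Casoratian $\det w_m(q)$ satisfies $\det w_{m+1}(q)=\det w_m(q)$, so it is independent of $m$. To identify it, note that for $|q|<1$ one has, modulo $q^{N+2}$,
\[
w_N(q)\;\equiv\;\begin{pmatrix} 1&1&1\\ c&c+1&c+2\\ c^2-d&(c+1)^2-d&(c+2)^2-d\end{pmatrix},
\qquad c=N+1-3\calE_1^{(0)}(q),\quad d=3\calE_2^{(0)}(q)+\tfrac16\calE_2(q),
\]
because the $n\ge 1$ terms of $H_j^{(N+k)}(q)$ contribute only at order $q^{N+2}$ and higher. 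An elementary column reduction gives determinant $2$ for this matrix, identically in $c$ and $d$; hence $\det w_m(q)=2+O(q^{N+2})$ for every $N$, and therefore $\det w_m(q)=2$ for all $m$ and all $|q|\neq 1$.

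For the \emph{(anti-)orthogonality relations} I would read \eqref{WWT52b} as a discrete Lagrange (Abel summation) identity for the operator $L_q$ of \eqref{52qdiffp}. Writing $L_q=-\bigl(L_0+q^{2+m}E^2\bigr)$ with $E$ the shift and $L_0=(E-1)^3$, the operator $L_0$ is formally self-adjoint up to $E\mapsto E^{-1}$, and the symmetric Gram matrix of its bilinear concomitant is exactly the anti-diagonal matrix $\bigl(\begin{smallmatrix}0&0&1\\0&2&0\\1&0&0\end{smallmatrix}\bigr)$ of \eqref{WWT52b} (the weights $1,2,1$). One checks that the substitution $q\mapsto q^{-1}$, $k\mapsto -k-2$ sends solutions of $L_{q^{-1}}$ to solutions of the adjoint $L_q^{*}$, so that $\tfrac12\,w_{m-1}(q)\bigl(\begin{smallmatrix}0&0&1\\0&2&0\\1&0&0\end{smallmatrix}\bigr)w_{-m-1}(q^{-1})^T$ is, up to an index shift, the concomitant pairing of a solution of $L_q$ against a solution of $L_q^{*}$. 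Paired against the concomitant of the \emph{undeformed} $L_0$, the Abel identity shows that the increment of this pairing from $m$ to $m+1$ is the defect generated by the deformation term $q^{2+m}E^2$, which telescopes (it is a perfect difference in $m$), producing precisely the single $q^{m}$-dependent entry on the right of \eqref{WWT52b} while the remaining entries stay constant in $m$; the constant part is then pinned down by a direct evaluation to low order in $q$, where all six series are explicit. Finally, for arbitrary $m,\ell$ one propagates from the pair in \eqref{WWT52b} using the companion matrices of \eqref{52qdiffp}, which have entries in $\BZ[q^{\pm}]$ and determinant $1$; together with $\det\!\Bigl(\tfrac12\,w_m(q)\bigl(\begin{smallmatrix}0&0&1\\0&2&0\\1&0&0\end{smallmatrix}\bigr)w_\ell(q^{-1})^T\Bigr)=\tfrac18\cdot 2\cdot(-2)\cdot 2=-1$, forced by $\det w_m=2$, this gives membership in $\GL_3(\BZ[q^{\pm}])$.

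The \textbf{main obstacle} is the bookkeeping inside this Lagrange identity: fixing the correct index offset between the $H_i^{(\bullet)}(q)$-window and the reflected $H_j^{(\bullet)}(q^{-1})$-window, and verifying that the deformation defect collapses to exactly one $q^{m}$-term (and to zero in all other entries) rather than to an infinite $q$-series. Once the concomitant, the Gram matrix, and the reflection $k\mapsto -k-2$ are matched up correctly, the remaining verifications are routine, but it is precisely this matching that has to be carried out with care; the $q$-difference equation and the determinant, by contrast, are short.
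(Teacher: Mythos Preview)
The paper does not prove this theorem at all; it is quoted as a result of~\cite{GGM:peacock} and no argument is given in the present paper. So there is no ``paper's own proof'' to compare against.

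That said, your outline is the natural one and is essentially what the cited reference does: the recursion for $H_0^{(m)}$ by direct telescoping (your one-line check is correct), the recursions for $H_1^{(m)},H_2^{(m)}$ by expanding the $\ve$-deformed sum, constancy of the Casoratian from the fact that the extreme coefficients of~\eqref{52qdiffp} are $\pm1$, and the orthogonality via the bilinear concomitant of the third-order operator together with propagation by the companion matrix. Two comments. First, your determinant check $\tfrac18\cdot 2\cdot(-2)\cdot 2=-1$ is right and is consistent with $\det\bigl(\begin{smallmatrix}1&0&0\\0&0&1\\0&1&3-q^m\end{smallmatrix}\bigr)=-1$; so the matrices actually have determinant $-1$, and the ``$\SL_3$'' in the displayed statement is a slip for $\GL_3$ (or for determinant $\pm1$). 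Second, the point you flag as the main obstacle is genuine but not deep: once you write the concomitant of $(E-1)^3$ explicitly and observe that the reflection $(q,m)\mapsto(q^{-1},-m-2)$ sends~\eqref{52qdiffp} to its adjoint, the ``deformation defect'' coming from the lone $q^{2+m}E^2$ term contributes only to the single lower-right entry, exactly as in~\eqref{WWT52b}; the remaining constant entries are then fixed by the $q\to0$ limit, where only the $n=0$ summands survive. Your sketch stops just short of writing this out, but the route is correct.
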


The series $H^{(m)}(q)$ for $|q| \neq 1$ appear in the factorization of the
descendant state integral of the $5_2$ knot
\begin{equation*}
Z_{5_2,m,m'}(\tau) =
\int_{\BR+{\rm i} 0} \Phi_{\sqrt{\tau}}(v)^3 {\rm e}^{-2 \pi {\rm i} v^2
+ 2\pi(m \tau^{1/2} - m' \tau^{-1/2})v} {\rm d} v, \qquad
m, m' \in \BZ, \qquad \tau \in \BC'
\end{equation*}
of~\cite{GGM}. It is a holomorphic function of $\tau \in \BC'$ that coincides with
$Z_{5_2}(\tau)$ when $m=m'=0$ and can be expressed
bilinearly in terms of $H^{(m)}(q)$ as follows:
\begin{align}
Z_{5_2,m,m'}(\tau) ={}&
(-1)^{m-m'+1}\frac{{\rm e}^{\frac{\pi {\rm i}}{4}}}{2}
q^{\frac{m}{2}}\tq^{\frac{m'}{2}} \nonumber
\left(\frac{q}{\tq}\right)^{\frac{1}{8}} \\  &{}\times
\big(\tau h^{(m)}_2(\tau) h^{(m')}_0\big(\tau^{-1}\big) + 2h^{(m)}_1(\tau)
h^{(m')}_1\big(\tau^{-1}\big) \+ \frac1\tau h^{(m)}_0(\tau) h^{(m')}_2\big(\tau^{-1}\big) \big),\label{520-desc-fac}
\end{align}
where
\begin{gather*}
h^{(m)}_j(\tau) := (-1)^j H^{(m)}_j\big ({\rm e}^{2\pi {\rm i} \tau}\big), \qquad
\tau \in \BC\setminus\BR
\end{gather*}
for $j=0,1,2$ and $m \in \BZ$.
It follows that the matrix-valued function
\begin{align*}
 W_{m,m'}(\tau) = \big(w_{m'}(\tq)^T\big)^{-1}
 \begin{pmatrix} \tau^{-1} & 0 & 0 \\ 0 & 1 & 0 \\ 0 & 0 & \tau
 \end{pmatrix} w_m(q)^T
\end{align*}
defined for $\tau=\BC\sm\BR$, has entries given by the
descendant state integrals (up to multiplication by a prefactor
of~\eqref{520-desc-fac}) and hence extends to a holomorphic function
of $\tau \in \BC'$ for all integers~$m$ and $m'$. Using this for
$m=-1$ and $m'=0$ and the orthogonality relation~\eqref{WWT52b}, it
follows that we can express the Borel sums of $\Phi(\tau)$ in a region
$R$ in terms of descendant state integrals and hence, as holomorphic
functions of $\tau \in \BC'$ as follows.

\section{The matrix-valued cocycle of a knot}

In this section, we extend the observations of the previous sections to
matrix-valued analytic functions which naturally give rise to a cocycle on
on the set of matrix-valued piece-wise analytic functions on $\BP^1(\BR)$.
What's more, we conjecture (and in the case of the $4_1$, prove) that this
cocycle, restricted to the rational numbers, exactly agrees with the cocycle
of our previous work~\cite{GZ:kashaev}, which naturally binds the two works
together and naturally leads to the concept of a~matrix-valued holomorphic
quantum modular form.

\subsection{An equivariant state integral}

We return to the $4_1$ knot. The factorization of the state-integral~\eqref{Psi41}
given in equation~\eqref{eq.prop1} in terms of the pair $(g_0(\tau),g_1(\tau))$
motivates us to consider the following function:
\be
\label{eq.I41g}
Z_{4_1}(\g;\t)  =  \frac{{\rm i}}{2} (\tq/q)^{1/24}
 \big((c\tau+d)^{-1/2} g_0(\tau) g_1(\gamma(\tau)) -
(c\tau+d)^{1/2} g_1(\tau) g_0(\gamma(\tau)) \bigr)
\ee
for an element $\g$ of $\SL_2(\BZ)$ and for $\tau \in \BC\sm\BR$, where now
$\tq$ denotes~$\mathbf{e}(\g\t)$. A priori, this function is not defined for any
real value of the argument. However, experimentally (by looking at the asymptotics
of the function as we approach real points vertically) we found the following.

\begin{observation}
 \label{ob.41gamma}
 For every $\g=\sma abcd \in \SL_2(\BZ)$, the function
 $Z_{4_1}(\g;\t)$ extends to the cut plane
 $\BC_\g:=\BC\sm\{\tau \mid c\tau+d \leq 0 \}$.
\end{observation}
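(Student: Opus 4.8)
The plan is to reduce the statement about $Z_{4_1}(\g;\t)$ to the already-established holomorphy and factorization of the ordinary Andersen--Kashaev state integral $Z_{4_1}(\tau)$ on $\BC'=\BC\sm(-\infty,0]$. The key observation is that the defining formula \eqref{eq.I41g} is a bilinear expression in the functions $g_0,g_1$ evaluated at $\t$ and at $\g\t$ with the appropriate automorphy factors $(c\t+d)^{\pm1/2}$, and that the combination appearing there is exactly of the shape $\tau^{-1/2}g_0(\tau)g_1(\tau') - \tau^{1/2}g_1(\tau)g_0(\tau')$ that \eqref{eq.prop1} identifies, up to the explicit prefactor $2\mathrm i(\tq/q)^{1/24}$, with a single-variable state integral when $\tau'=-1/\tau$. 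First I would set up the comparison for $\g=\sma 0{-1}1{\hphantom{-}0}$: there \eqref{eq.I41g} literally becomes $Z_{4_1}(\tau)$ by \eqref{eq.prop1}, and Observation~\ref{ob.41gamma} in that case is precisely the known holomorphy on $\BC'$, since $c\tau+d=\tau$ and $\BC_\g=\BC\sm(-\infty,0]$.

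For a general $\g=\sma abcd$, the idea is to exploit the $1$-periodicity of $g_0$ and $g_1$ together with a decomposition of $\g$ that isolates the modular inversion. Concretely I would write $\g = T^{a'} S T^{b'}$ (or, when $c=0$, $\g=\pm T^{b'}$), where $S=\sma 0{-1}1{\hphantom{-}0}$, $T=\sma 1101$, and use that $g_j(\t+n)=g_j(\t)$ to reduce the expression $Z_{4_1}(\g;\t)$ to the $S$-case evaluated at a Möbius-shifted argument. The automorphy factors compose correctly because $(c\t+d)^{1/2}$ is a cocycle (up to sign) on the relevant cut plane, and the cut plane $\BC_\g=\{\tau: c\tau+d\notin(-\infty,0]\}$ is exactly the preimage under $\t\mapsto\g\t$ of the domain on which the $S$-version is holomorphic, since $\g$ maps $\BC_\g$ biholomorphically onto $\BC'$ via the standard fact that $S$ sends the complement of the cut to itself after the shift. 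Thus holomorphy of $Z_{4_1}(S;\cdot)$ on $\BC'$ pulls back to holomorphy of $Z_{4_1}(\g;\cdot)$ on $\BC_\g$.

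The main obstacle I expect is the matching of prefactors and branch choices: the factor $(\tq/q)^{1/24}$ with $\tq=\mathbf e(\g\t)$, $q=\mathbf e(\t)$, and the factors $(c\t+d)^{\pm1/2}$ all carry branch ambiguities that must be tracked consistently when one substitutes $\g\t$ for $\tau$ and uses periodicity to shift back. One must check that the cocycle relation for these prefactors under $\g\mapsto T\g$ and $\g\mapsto\g T$ holds on the nose (not just up to roots of unity), which amounts to a finite computation with the transformation behavior of $q^{1/24}$ under $\t\mapsto\t+1$ and of $(c\t+d)^{1/2}$ under the generators of $\SL_2(\BZ)$; this is where the Dedekind-$\eta$-type $24$th roots naturally enter. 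Once this bookkeeping is done, the extension statement itself is immediate from the $S$-case. A cleaner alternative, which I would also mention, is to invoke directly the factorization of the \emph{equivariant} state integral of \cite{GKZ:modular}: that paper constructs a holomorphic function on $\BC_\g$ whose factorization is manifestly the right-hand side of \eqref{eq.I41g}, so Observation~\ref{ob.41gamma} follows by identifying the two. I would present the self-contained reduction to the $S$-case as the main argument and note the \cite{GKZ:modular} route as giving a conceptual explanation.
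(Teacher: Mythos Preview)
Your ``main argument'' has a genuine gap: the decomposition $\g = T^{a'} S T^{b'}$ that you propose only produces matrices with $|c|\le 1$, since $T^{a'} S T^{b'} = \sma{a'}{a'b'-1}{1}{b'}$. A general element of $\SL_2(\Z)$ requires a longer word $T^{a_1}S T^{a_2}S\cdots S T^{a_k}$, and once two or more copies of $S$ appear your periodicity reduction no longer lands you back at the known $S$-case. Concretely, your reductions $Z_{4_1}(T^m\g;\t)=Z_{4_1}(\g;\t)$ and $Z_{4_1}(\g T^n;\t)=Z_{4_1}(\g;\t+n)$ are correct and useful, but they only move within a double coset $T^{\Z}\g T^{\Z}$; to cross between cosets you would need a relation under $\g\mapsto S\g$ or $\g\mapsto\g S$, and there is no analogue of the periodicity trick for those. (One could try to use the cocycle identity $W_{\g\g'}(\t)=W_\g(\g'\t)W_{\g'}(\t)$ to bootstrap, but that is a statement about the full $2\times2$ matrix~\eqref{Wg}, so it requires knowing the holomorphic extension of \emph{all} entries of $W_S$, i.e., of the descendant state integrals, not just of the single entry $Z_{4_1}(\t)$; and even then one must check that the domains $\BC_{\g}$ and $\g'{}^{-1}\BC_{\g'}$ patch up correctly on the real line, which is exactly the subtlety the paper addresses by passing to $\PSL_2(\Z)$ in~\eqref{Wcoc}.) Your side remark that ``$\g$ maps $\BC_\g$ biholomorphically onto $\BC'$'' is also not right: already for $\g=S$ one has $S(\BC')=\BC\sm[0,\infty)\ne\BC'$.

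Your ``cleaner alternative'' is in fact the route the paper takes: it does not give a self-contained proof, but explains that an $\SL_2(\Z)$-version of the Faddeev quantum dilogarithm and of the state integral is constructed in~\cite{GKZ:modular}, is shown there to be holomorphic on $\BC_\g$, and is identified (via its factorization) with $Z_{4_1}(\g;\t)$ up to elementary factors. So the honest statement is that the observation is proved in~\cite{GKZ:modular}, and the reduction-to-$S$ argument, as you have sketched it, does not suffice for general~$\g$.
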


To explain and prove this observation, we introduced
an $\SL_2(\BZ)$-version of the state-integral using an $\SL_2(\BZ)$-version of
Faddeev's quantum dilogarithm \big(where the latter function corresponds to
$\g=\sma 0{-1}1{\hphantom{-}0}$\big) that satisfies a pentagon identity. The functional properties
of this quantum dilogarithm implies that the corresponding state-integral extends
on $\BC_\g$, and its factorization coincides, up to elementary factors, with the
function $Z_{4_1}(\g;\t)$ for the case of the $4_1$ knot. This is discussed in
current joint work with Kashaev~\cite{GKZ:modular}, where in particular a
proof of the above observation is given.

\subsection{A matrix-valued cocycle}

The state integral $Z_{4_1}(\t)$ is just one component of a $2\times2$ matrix
closely related to the matrix~$W_{0,0}(\t)$ defined in equation~\eqref{W410-desc},
and similarly the equivariant state integral~\eqref{eq.I41g}, up to elementary
factors, becomes just one component of a $2\times2$ matrix-valued function
\begin{gather}
\label{Wg}
W_\g(\t)  =  Q(\g\t)^{-1} \diag\bigl((c\t+d)^{-1},1\bigr)Q(\t),
 \qquad \t \in \C\sm \R .
\end{gather}
Observation~\ref{ob.41gamma} now generalizes to the statement that the function~$W_\g$
extends holomorphically from the upper and lower half-planes to~$\C_\g $.
Its restriction to~$\C\sm\R$ is a matrix-valued holomorphic cocycle there,
meaning that it satisfies
\begin{gather}
\label{eq.Wgg'}
W_{\gamma \gamma'}(\tau)  =  W_\gamma(\gamma'\tau) W_{\gamma'}(\tau)
\end{gather}
because the diagonal matrix appearing as the middle factor in~\eqref{Wg} is a
cocycle, so that the function $\g\mapsto W_\g$ is a ``twisted coboundary''. If $W_\g$
extended to the whole plane, then this cocycle property would automatically extend
to the real line by continuity. This doesn't quite work since $W_\g$ does not extend
to the whole real line, but only to a subset of it, namely the set of~$x$ with $cx+d>0$,
depending on~$\g$. To solve this problem, we pass from $\SL_2(\Z)$ to its quotient
$\PSL_2(\Z)=\SL_2(\Z)/\{\pm1\}$ and define a $\PSL_2(\Z)$-cocycle
$\bg\mapsto W^\R_{\bg}$ with values in the group of piecewise-analytic
invertible matrix-valued functions on~$\P^1(\R)$ by setting
\begin{gather}
\label{Wcoc}
W^\R_{\bg}(x)  =  W_\g(\t)\bigr|_{\t=x} 
\qquad\text{for} \quad cx+d>0,
\end{gather}
observing that for any element $\bg$ of $\PSL_2(\Z)$ and $x\in\R\sm\{-d/c\}$
we can lift~$\bg$ to a unique element $\g\in\SL_2(\Z)$ with $cx+d$ positive.
Of course the new cocycle on~$\P^1(\R)$ is no longer a~coboundary in any sense.
But this is a bonus rather than a defect, since non-trivial cohomology classes
are more interesting than trivial ones.

In the paper~\cite{GZ:kashaev} we had also found a cocycle on piecewise analytic
functions on~$\R$ with a~completely different definition, in terms of the
asymptotics near rational numbers of generalized Habiro-like functions.
The two cocycles turn out to agree, provably for the $4_1$ knot and conjecturally
in general. We discuss this next.

\subsection{The two cocycles agree}
\label{sub.cocycles}

We now show that the cocycle~\eqref{Wcoc} and the one from our prior
paper~\cite{GZ:kashaev} agree for the case of the $4_1$ knot.

We first recall from~\cite[Section~7.1]{GZ:kashaev} the periodic function
$ J=J^{(4_1)}$ on $\BQ$ defined by
\begin{gather}
\label{Q+41}
J(x)  =  \begin{pmatrix} J_{1,1}(x) & J_{1,2}(x) \\ J_{2,1}(x) & J_{2,2}(x)
\end{pmatrix},
\end{gather}
where
\begin{gather}
J_{1,1}(x)  =  \frac{1}{\sqrt{c}\sqrt[4]{3}} \sum_{Z^c = \z_6}
 \prod_{j=1}^c \bigl|1\m q^jZ\bigr|^{2j/c}, \nonumber\\
J_{2,1}(x)  =  \frac{{\rm i}}{\sqrt{c}\sqrt[4]{3}} \sum_{Z^c = \z_6^{-1}}
 \prod_{j=1}^c \bigl|1\m q^jZ\bigr|^{2j/c} ,\nonumber\\
J_{1,2}(x)  =  \frac{1}{2\sqrt{c} \sqrt[4]{3}}
\sum_{Z^c = \z_6} \bigl(Zq\m Z\i q\i\bigr) \prod_{j=1}^c \bigl|1\m q^jZ\bigr|^{2j/c},\nonumber
\\
J_{2,2}(x)  =  \frac{{\rm i}}{2\sqrt c \sqrt[4]{3}}
\sum_{Z^c = \z_6^{-1}} \bigl(Zq\m Z\i q\i\bigr)
\prod_{j=1}^c \bigl|1\m q^jZ\bigr|^{2j/c} \label{Q41abcd}
\end{gather}
with $q=\mathbf{e}(x)$ and $c=\text{denom}(x)$ being the denominator of $x$. (Actually,
the periodic
function defined in~\cite{GZ:kashaev}, and denoted there by~$\bJ=\bJ^{(4_1)}$, was a
$3\times3$ matrix with first column $(1 0 0)^T$ and bottom $2\times2$ piece~$J$,
but we will only need this part of it.)
The matrix $J$ defines a cocycle~\cite[Section~5]{GZ:kashaev}
\begin{gather}
\label{Wdef}
W^\Hab_\g(x)  =  J(\g x)^{-1} \diag\bigl ({\rm e}^{C\lambda_\g(x)}, {\rm e}^{-C\lambda_\g(x)}\bigr)   J(x) ,
\end{gather}
where $C$ is $1/2\pi$ times the volume of the figure~8 knot and
$\g\mapsto\lambda_\g$ is the~$\Q$-valued cocycle defined in equation~(24)
of~\cite{GZ:kashaev}
\begin{gather*}
\lambda_\g(x)  :=  \frac{1}{\den(x)^2\big(x-\g^{-1}(\infty)\big)}  =  \frac{c}{s(cr+ds)}
 =  \pm \frac c{\den(x)\den(\g x)} .
\end{gather*}
One of the main discoveries of~\cite{GZ:kashaev}, conjectural in general but
proved for the $4_1$~knot, is that this coboundary extends smoothly from
$\Q\sm\{-d/c\}$ to~$\BR\sm\{-d/c\}$. (Actually, in~\cite{GZ:kashaev} only a~somewhat
weaker statement was discussed, namely, that the function on~$\Q$ has a power
series to all orders in~$x-x_0$ as the argument $x$ tends to a fixed rational
number~$x_0$, with the stronger statement with smoothness, or even real-analyticity,
being mentioned there as an consequence of the results in the current paper.)

The next theorem links the cocycle of our
paper~\cite{GZ:kashaev} with the one of the current paper and explains the
bond between our two papers.

\begin{Theorem}
\label{thm.1}
The cocycles $W^\R$ and $W^\Hab$ coincide.
\end{Theorem}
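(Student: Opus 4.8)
The plan is to identify both sides of the claimed equality as matrix-valued cocycles on $\mathbb{P}^1(\mathbb{R})$ for $\PSL_2(\BZ)$, and to reduce the identity of cocycles to the equality of their defining data on a generating set of $\PSL_2(\BZ)$, namely $S=\sma 0{-1}1{\hphantom{-}0}$ and $T=\sma 1101$. First I would recall that by Observation~\ref{ob.41gamma} (and its proof via the $\SL_2(\BZ)$-quantum dilogarithm of~\cite{GKZ:modular}), the function $W_\g$ in~\eqref{Wg} extends holomorphically from $\C\sm\R$ to the cut plane $\C_\g$, so that $W^\R_{\bg}$ in~\eqref{Wcoc} really is a piecewise-analytic invertible matrix-valued function, and it is a cocycle by~\eqref{eq.Wgg'}. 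On the other side, by the main result of~\cite{GZ:kashaev} (proved there for $4_1$) the coboundary $W^\Hab_\g$ in~\eqref{Wdef} extends smoothly (indeed real-analytically, granting the present paper) from $\Q\sm\{-d/c\}$ to $\R\sm\{-d/c\}$ and is a $\PSL_2(\BZ)$-cocycle. So both $W^\R$ and $W^\Hab$ are genuine cocycles on the same coefficient module, and it suffices to check $W^\R_{\bar S}=W^\Hab_{\bar S}$ and $W^\R_{\bar T}=W^\Hab_{\bar T}$ as piecewise-analytic functions on $\mathbb{P}^1(\mathbb{R})$.

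The case $\bar T$ should be essentially trivial: for $\g=T$ one has $c\tau+d=1$, the middle diagonal factor in both~\eqref{Wg} and~\eqref{Wdef} is the identity, and $\lambda_T\equiv 0$, so $W^\R_{\bar T}(x)=Q(x+1)^{-1}Q(x)$ and $W^\Hab_{\bar T}(x)=J(x+1)^{-1}J(x)$; since $g_0,g_1$ are $1$-periodic the matrix $Q(\tau)$ in~\eqref{Q41} is $1$-periodic, and $J(x)$ is manifestly $1$-periodic from~\eqref{Q41abcd}, so both sides equal the identity. The substance is the case $\bar S$. Here $\g=S$, $c\tau+d=\tau$, and $W_S(\tau)=Q(-1/\tau)^{-1}\diag(\tau^{-1},1)Q(\tau)$, which by~\eqref{W410-desc} (with $m=m'=0$) is exactly $W_{0,0}(\tau)$ up to the elementary prefactors relating $Q$ to $w_0$; this is the matrix whose entries are the descendant Andersen--Kashaev state integrals $Z_{4_1,m,m'}(\tau)$. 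On the Habiro side, $W^\Hab_S(x)=J(-1/x)^{-1}\diag({\rm e}^{C\lambda_S(x)},{\rm e}^{-C\lambda_S(x)})J(x)$ with $\lambda_S(x)=1/(\den(x)^2 x)$. The key analytic input is the ``second factorization of the state integral at rational points'' of~\cite{GK:evaluation}: at a rational number $x=p/c$ in lowest terms, the descendant state integral $Z_{4_1,m,m'}$ factors as a bilinear expression in the Habiro-type functions $J(x)$ and $J(-1/x)$, with the exponential prefactors ${\rm e}^{\pm C\lambda_S(x)}$ coming out precisely (these are the same prefactors ${\rm e}^{\pm C}$ seen in the Corollary above). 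Matching this factorization with~\eqref{W410-desc} at rational arguments gives $W^\R_S(x)=W^\Hab_S(x)$ for all $x\in\Q$ with $x>0$ (lifting $\bar S$ so that $cx+d=x>0$), and then since both sides are real-analytic on the relevant interval and agree on the dense set $\Q\cap(0,\infty)$, they agree identically there; the other component of $\mathbb{P}^1(\R)$ is handled by the symmetry $x\mapsto x^{-1}$ together with $Z_{4_1}(\tau)=Z_{4_1}(\tau^{-1})$ and the corresponding symmetry of $J$.

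The main obstacle is the middle step: converting the bilinear factorization of the descendant state integrals at rational points from~\cite{GK:evaluation} into the precise matrix identity $Q(-1/x)^{-1}\diag(x^{-1},1)Q(x)=J(-1/x)^{-1}\diag({\rm e}^{C\lambda_S(x)},{\rm e}^{-C\lambda_S(x)})J(x)$. This requires (i) pinning down all the elementary prefactors (powers of $q^{1/24}$, signs, the $\sqrt[4]{3}$ normalizations in~\eqref{Q41abcd}) so that the two $2\times2$ matrices match on the nose rather than up to an ambiguous scalar or diagonal twist, and (ii) checking that the exponential weights ${\rm e}^{\pm C\lambda_S(x)}$ produced by the cut-plane continuation of the state integral coincide with the cocycle $\lambda_S$ of~\cite{GZ:kashaev}; this is where the formula $\lambda_\g(x)=c/\den(\g x)\thin\den(x)$ and the volume normalization $C=V/2\pi$ enter. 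Once the $\bar S$ identity is established componentwise at one rational point (say $x=1$, where one can compare the explicit Taylor expansions around $\tau=1$ in~\eqref{eq.41swW} with the explicit values of $J$ at $1$), the cocycle relation propagates it to all of $\PSL_2(\BZ)$, completing the proof.
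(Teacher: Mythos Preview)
Your proposal is correct and follows essentially the same route as the paper's proof in Appendix~\ref{app.cocycles}: reduce to the generators $S$ and $T$, dispose of $T$ by periodicity, and for $S$ invoke the factorization of (descendant) state integrals at positive rational points from~\cite{GK:evaluation} to match the entries of $W^\R_S$ with those of $W^\Hab_S$ on $\Q_{>0}$, then extend by continuity. The paper carries this out explicitly for the $(\sigma_2,\sigma_1)$ entry (equations~\eqref{IR} and~\eqref{WS41v}), obtaining the exponential weights ${\rm e}^{\pm C/(MN)}$ directly from the rational-point evaluation formula~\eqref{Irat} rather than from any continuation argument, and notes that the remaining three entries are handled identically via the descendant state integrals; your identification of the prefactor bookkeeping as the main technical obstacle is exactly what the paper addresses with its ``$\Tsim$'' notation and the subsequent lemma.
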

Because $\SL_2(\BZ)$ is generated by $S=\sma 0{-1}1{\hphantom{-}0}$ and $T=\sma 1101$, and
both of the functions under consideration are cocycles and are trivial
on~$T$, and because both are continuous on ~$\R\sm\{0\}$, it is enough to
prove the equality
\begin{gather*}
W_S^\Hab(x)  =  W_S(x) \qquad\text{for} \quad x\in\Q^* .
\end{gather*}
The proof of this identity, given in Appendix~\ref{app.cocycles},
uses a ``factorization'' of state integrals at
positive rational points (i.e., a bilinear expression of a vector of functions
of $\tau \in \BQ$ and $-1/\tau$) established by Kashaev and the first
author~\cite{GK:evaluation}, similar to the ``factorization'' of state integrals
when $\tau \in \BC\sm\BR$ of the first author and Kashaev~\cite{GK:qseries}.
These two ``factorization'' properties of state integrals, one in the upper-half
plane and another in the positive rational numbers, are separate (in the sense that
we do not know how to deduce one from the other) but closely-related facts.


\subsection{Matrix-valued holomorphic quantum modular forms}
\label{sub.matrix}

We believe that the results we have been describing for the $4_1$~knot will
apply to all hyperbolic knots (possibly with the disclaimers given in the
introduction to~\cite{GZ:kashaev} about the behavior of character varieties
of general knots). Some part of the story, the matrix of ``descendant" functions
and the factorization formula~\eqref{520-desc-fac}, was carried out for the
$5_2$~knot in~\cite{GGM} and described in Section~\ref{sub.52blocks}, and
another part, the asymptotics (analogue of Observation~\ref{ob.Qmat}) was
carried out for the same knot in~\cite{GW:asy3D}. For the $(-2,3,7)$-pretzel
knot, only a part of the story, concerning what should be the upper left-hand
of the matrix~$W_S(\t)$ for this knot, was given in Section~\ref{sub.237}.
We have not done the corresponding calculations for any other knots, but the
expected pattern is clear and will be told here. These examples will also
lead to a new notion of ``matrix-valued holomorphic quantum modular forms"
which we expect will be of interest also in areas that are unrelated
to quantum topology.

To each hyperbolic knot we are going to assign various $r\times r$ matrices,
where $r$ is the number of non-trivial boundary parabolic $\SL_2(\C)$-representations.
(Some of them, and perhaps all, extend to square matrices of size~$r+1$ including
also the trivial representation, as discussed in~\cite{GZ:kashaev}
and~\cite{Wheeler:thesis},
but we will not go into this here. These larger matrices were denoted by boldface
letters there and we will use non-boldface names here to distinguish them.) Some of
these will be periodic functions (on either~$\Q$ or~$\C\sm\R$), but with the property
that the corresponding coboundaries lead to the {\it same} cocycle $W_\g$ with values
in the group of invertible matrices of piecewise analytic functions on~$\P^1(\R)$.
The periodic functions on~$\Q$ are either the generalized Habiro
functions~$\bJ^{(K)}(\a)$ or the related matrices of power series $\Phi(\a)$ of the
previous paper~\cite{GZ:kashaev}, whereas the matrix-valued functions in $\C\sm\R$
are the functions $Q=Q^K(\t)$ studied here. They have the following properties
and interrelations:
\begin{itemize}\itemsep=0pt
\item[$({\rm i})$] The matrix $Q=Q^K$ is a holomorphic and periodic in $\C\sm\R$ and meromorphic
at infinity, meaning that each of its entries is a power series in some rational
power of $q=\mathbf{e} (\t)$ in the upper half-plane and in $q^{-1}$ in the lower half-plane.
We also have ``weight'' $k=(k_1,\dots,k_r)\in\Z^r$ and a representation
$\rho\colon \SL_2(\BZ) \to \GL_r(\BC)$ which factors through $\Gamma(N)$ for some
integer~$N$ (called the {\it level} of the knot) which are compatible in the sense
that the map
\[
\g \;\mapsto\; j_\g(\t) := \rho(\g) \diag\big((c\t+d)^{k_i}\big)
\qquad\text{for} \quad\g=\sma abcd
\]
is a cocycle on~$\SL_2(\Z)$. (The representation~$\rho$ is a minor technical point
that arose in~\cite{GZ:kashaev} for the $(-2,3,7)$-pretzel knot but was trivial
for both the $4_1$ and $5_2$ knots and can be ignored.) The key property, which
is the one that says that $Q$ is a holomorphic quantum modular form, is that the
matrix-valued function
\begin{gather*}
W_\gamma(\tau) = Q(\g\t)^{-1} j_\g(\t) Q(\tau), \qquad \tau \in \BC\sm \BR
\end{gather*}
extends holomorphically from $\C\sm\R$ to $\C_\g$ for each~$\g\in\SL_2(\Z)$, just
as we saw above for the $4_1$~knot. This map automatically satisfies
equation~\eqref{eq.Wgg'}, and therefore leads to a~$\PSL_2(\Z)$-cocycle~$w^\R$,
with values in the ring of invertible piecewise analytic matrix-valued functions
on $\P^1(\R)$, by the same formula~\eqref{Wcoc} as before.

\item[$({\rm ii})$] Secondly, we associate to the knot~$K$ a collection $\a\mapsto\Phih_\a(h)$
of matrices, indexed by numbers~$\a\in\Q/\Z)$ (or equivalently, by roots of unity),
which are the generalized Habiro invariants whose existence was conjectured, and
in some cases extensively checked numerically, in~\cite{GZ:kashaev}.
The entries of these matrices are completed power series in an formal variable~$h$,
where ``completed" means that they belong to ${\rm e}^{v/h}\C[[h]]$ for some $v$, which
in fact will depend only the column of the matrix in which the entry lies and will
be the appropriate complexified hyperbolic volume. We think of $\Phi_a(h)$ as the
value of some formal function~$\Phih$ at $x=\a+{\rm i}\hbar$, defined in infinitesimal
neighborhoods of all rational points~$\a$. The group~$\PSL_2(\BZ)$ acts on the
space of such formal functions, so that we again get a~coboundary
$\Phih(\g x)^{-1}\Phi_\g(x)$, and this turns out to become a smooth function~$W^\Hab_\g(x)$
of~$\g$ and of a~real variable $x\in\R\sm\big\{\g^{-1}(\infty))\big\}$.
(For a more precise statement, see~\cite[equation~(78)]{GZ:kashaev}.) This new
function is then of course a cocycle, and the conjectural general statement is
that it simply coincides with~$W^\R$. The relation with what we said for the~$4_1$
knot in Section~\ref{sub.cocycles} is that, if we write the completed
power series-valued matrix $\Phih_a(h)$ as the product on the left of a true
power series-valued matrix $\Phi_a(h)$ by the diagonal matrix with entries
${\rm e}^{V_j/c^2h}$ ($j=1,\dots,r$, $c=$denom$(\a)$),
and then define $J(\a)$ to be the
constant term $\Phi_\a(0)$ of this matrix, then we have yet another coboundary
defined by the obvious analogue of equation~\eqref{Wdef}. The latter is now a
$\GL_r(\C)$-valued function on rational numbers, that again extends continuously
to the same smooth cocycle~$W_\g$ as before. It is this latter statement that
directly generalizes Theorem~\ref{thm.1} above, but the statement that we want
to emphasize is that the {\it same} cocycle $\g\mapsto W_\g$ trivializes (i.e.,
becomes a~coboundary) in each of three larger spaces than the space of piecewise
real-analytic functions on~$\P^1(\R)$ in which it is originally defined.
We can think of each of these trivializations \big(given by~$Q$,~$\Phih$ and~$J$\big)
as realizations of the same object in different spaces, similar to the various
realizations of motives in differently defined cohomology groups.

\item[$({\rm iii})$] Finally, and in some sense quite amazingly, the cocycle $W$ is not only
determined by the completed power series-valued matrix-valued function
$\alpha\mapsto\Phi_\a$ as its coboundary, but conversely determines this function
uniquely by the asymptotic property.
\begin{gather*}
W_\gamma(X)^{-1} \sim
\Phih_{\gamma(\infty)}\biggl(\frac{2 \pi {\rm i}}{c(cX+d)}\biggr) .
\end{gather*}
\end{itemize}

The matrices we have been discussing have a number of further interesting
properties, some of which we list in no particular order.

{\it Orthogonality.} There exists a matrix $B \in \GL_r(\BC)$ such that
\begin{gather}
\label{eq.AB}
Q(-\tau)^t B Q(\tau)  =  I .
\end{gather}

{\it $q$-holonomicity.} This property was discussed for the Habiro-like matrix
invariants in~\cite{GZ:kashaev}, while its $q$-series analogue, of which
equation~\eqref{41qdiff} is a special case, was the starting point
of~\cite{GW:modular}.

{\it Unimodularity.} In the cases that we have looked at, all of the matrices
we have been discussing were unimodular. We do not know whether to expect this
property in general.

{\it Bilinearity.} Property~\eqref{eq.AB} implies that $W$ can be expressed
bilinearly in terms of the entries of~$Q$ by
\[
W_\gamma(\tau)  =  (Q|_{\ve \gamma \ve})^t(\ve \tau) B Q(\tau),
\]
where $\ve = \left(\begin{smallmatrix} -1 & 0 \\ \hphantom{-}0 & 1 \end{smallmatrix}\right)$.

{\it Taylor series.} The cocycle property of $W_\gamma$ allows one to compute
the Taylor series expansion of the smooth function $W_\gamma$ at every rational
point and express them bilinearly in terms of the matrix $\Phih$ as was done
in~\cite[Proposition~5.2]{GZ:kashaev}.

\section{Final remarks}

In this paper, we discussed the properties of a 2 by 2 matrix $Q$ of periodic
functions on $\BC\sm\BR$ associated to the $4_1$ knot (see equation~\eqref{Q41}).
On the other hand, in our companion paper~\cite{GZ:kashaev}, we constructed
a 3 by 3 matrix $Q^+$ of periodic functions on $\BQ$ (see equation~\eqref{Q+41}).
Wheeler~\cite{Wheeler:thesis} has found an extension of our 2 by 2 matrix $Q$
(with one boring column $(1,0,0)^t$ and one interesting row) using the
$\ve$-deformation series~\eqref{eq.41deform}.

Another aspect of the matrix $Q$ of $q$-series associated to a knot appears
to be in connection to the resurgence, i.e., analytic continuation, of the
the factorially divergent series $\Phih(h)$ in the complex Borel plane.
In fact the matrix $Q$ appears to completely describe this problem of analytic
continuation as found by the first author and Gu and
Mari\~{n}o~\cite{GGM, GGM:peacock}. The so-called Stokes constants of the
analytic continuation problem are integers, multiplied by integer powers of
$\tq={\rm e}^{-2\pi {\rm i}/\tau}$ that assemble into power series with integer coefficients
which are none other than the matrix $Q(-1/\tau)$. This approach to resurgence
of asymptotic series is similar to the one proposed abstractly by
Kontsevich--Soibelman~\cite{MaximTalks,KS:stability,KS,KS:wall-crossing,KS:ar}.

It is clear from the data that is used to define a state integral that
the proposed holomorphic quantum modular forms are not only associated to knots,
but more generally to suitable half-symplectic matrices introduced
in~\cite{GZ:kashaev}, or alternatively to combinatorial gadgets often
called $K_2$ Lagrangians.

The proposed quantum holomorphic modular forms that appear here presumably
correspond to the partition functions $Z(h)$ and $\widehat{Z}(q)$ predicted by the
ongoing program of Gukov and collaborators~\cite{Gukov:largeN,Gukov-Manolescu, Gukov:resurgence, Gukov:BPS} for general 3-manifolds.

In the present paper we do not study the dependence of the invariants on
Jacobi variables, but postpone this for a later study. An example of such
invariants with the Jacobi variable corresponding to the holonomy of the
meridian of a knot complement was given in~\cite{GGM:peacock}.


\appendix

\section{Complements and proofs}

In this appendix, we provide proofs of some of the observations, in particular
regarding the $4_1$ knot, that were made in Sections~\ref{sec.how} and~\ref{sec.qandh}.

\subsection[q-series identities]{$\boldsymbol{q}$-series identities}
\label{sub.ZwegersIdentity}

We begin by giving the proof of the two identities of equation~\eqref{eq.3g},
as communicated to us by Sander Zwegers. We will use the identity~\eqref{eq.qxinf}
and
\begin{gather}
\label{eq.qnqm}
\frac{1}{(q)_m (q)_n}  = \sum_{\substack{r,s,t \geq 0 \\ r+s=m,s+t=n}}
\frac{q^{rt}}{(q)_r (q)_s (q)_t},
\end{gather}
which may be found for instance in~\cite{Za:DL}, where we abbreviate
$(q)_n=(q;q)_n$. If we sum over $m$ using~\eqref{eq.qxinf} we find
\begin{gather*}
\sum_{m,n \geq 0}(-1)^{m+n} \frac{q^{\frac{1}{2}m^2 + mn + \frac{1}{2}n^2
 + \frac{1}{2}m+\frac{1}{2}n}}{(q)_m(q)_n}\\
 \qquad{}
= \sum_{n \geq 0} (-1)^n \frac{q^{\frac{1}{2} n^2 + \frac{1}{2} n}}{(q)_n}
(q^{n+1})_\infty
=(q;q)_\infty
\sum_{n \geq 0} (-1)^n \frac{q^{\frac{1}{2} n^2 + \frac{1}{2} n}}{(q)_n^2} .
\end{gather*}
Equation~\eqref{eq.qnqm} with $m=n$ gives
\[
\frac{1}{(q)_n^2}  =
\sum_{\substack{r,s,t \geq 0 \\ r+s=n,s+t=n}}
\frac{q^{rt}}{(q)_r (q)_s (q)_t} =
\sum_{\substack{r,s \geq 0 \\ r+s=n}}
\frac{q^{r^2}}{(q)_r^2 (q)_s} ,
\]
and so
\[
\sum_{m,n \geq 0} (-1)^{m+n} \frac{q^{\frac{1}{2}m^2 + mn + \frac{1}{2}n^2
+ \frac{1}{2}m+\frac{1}{2}n}}{(q)_m(q)_n} =
(q;q)_\infty \sum_{n \geq 0} (-1)^n q^{\frac{1}{2} n^2 + \frac{1}{2} n}
\sum_{\substack{r,s \geq 0 \\ r+s=n}}
\frac{q^{r^2}}{(q)_r^2 (q)_s}
\]
which we can also write as
\[
(q;q)_\infty \sum_{r,s \geq 0}(-1)^{r+s}
\frac{q^{\frac{3}{2} r^2 + rs +\frac{1}{2}s^2 +\frac{1}{2}r+\frac{1}{2}s}
}{(q)_r^2 (q)_s} .
\]
Summing over $s$ and using~\eqref{eq.qxinf} with $x=q^r$, we get that this
equals to
\[
(q;q)_\infty \sum_{r \geq 0} (-1)^r
\frac{q^{\frac{3}{2} r^2 +\frac{1}{2}r}}{(q)_r^2} (q^{r+1})_\infty
=
(q;q)_\infty^2 \sum_{r \geq 0} (-1)^r
\frac{q^{\frac{3}{2} r^2 +\frac{1}{2}r}}{(q)_r^3} .
\]
This concludes the proof of~\eqref{eq.3g}.
\qed

\subsection{Asymptotics at roots of unity}

For the comparison of the results of this paper and those of~\cite{GZ:kashaev},
we need to understand the asymptotics of our $q$-series near roots of unity.
This is not the main theme of the paper and we will not go into detail, but as
an indication of the method we prove Observation~\ref{ob.1} giving the asymptotics
(to all orders) of the two q-series $G_0(q)$ and $G_1(q)$ associated to the $4_1$
knot at $q=1$. For this purpose we will use the formula for $G_0(q)$
given in the second part of equation~\eqref{eq.3g}.

To find the asymptotics of $G_0(q)$, we use the ``Meinardus trick''
as explained in \cite[pp.~54--55]{Za:DL}. This would work using either identity
in equation~\eqref{eq.3g}, but since the first would lead to a~double rather than
a single integral, we use only the second one.
From the second representation of $G_0(q)$ in equation~\eqref{eq.3g} and the standard
expansion
\[
\frac1{\ps x} \;:=\; \prod_{i=0}^\infty\frac1{1\m q^ix}
 =  \sum_{n=0}^\infty\frac{x^n}{(q;q)_n}, \qquad q, x\in\C,\qquad |q|<1,
\]
we get the integral representation
\begin{gather*}
\ps q G_0(q)  = \ct\biggl(\frac{\Th(x)}{\ps x^2}\biggr)
  =  \int_{{\rm i} \ve+\R/\Z}\frac{\Th(\mathbf{e}(u))}{\ps{\mathbf{e}(u)}^2}\, {\rm d}u,
\end{gather*}
where $\ve>0$ is a small and positive, ``$\ct$'' means ``constant term'' with respect
to $x$, and $\Th(x)$ is defined by
\begin{gather*}
\Th(x)  =  \sum_{n=-\infty}^\infty (-1)^n q^{\binom{n+1}2} x^{-n},
\qquad q, x\in\C,\qquad |q|<1 .
\end{gather*}
From the transformation law
$\th(\t,u)=\sqrt{{\rm i}/\t} \mathbf{e}\bigl(-u^2/{\rm i}\t\bigr)\th(-1/\t,u/\t)$ of the
Jacobi theta function $\th(\t,u)=\mathbf{e}(\t/8+u/2)\Th(\mathbf{e}(u))$ we get
\[
\Th(\mathbf{e}(u))  =  \sqrt{\frac {\rm i}\t}
\sum_{\lambda \in u-\frac{1+\t}2+\Z}\mathbf{e}\biggl(\frac{\lambda^2\ttau}2\biggr),
\qquad q=\mathbf{e}(\t),\qquad \ttau=-1/\t .
\]
Inserting this into the integral representations of~$G_0(q)$ and
unfolding in the usual way gives
\[
\sqrt{\frac\t {\rm i}} G_0(q)  =  \frac1{\ps q}
\int_{{\rm i}\ve+\R}\frac{\mathbf{e}\big(\frac\ttau2\big(u-\frac{1+\t}2\big)^2\big)}{\ps{\mathbf{e}(u)}^2} \, {\rm d}u .
\]
We now apply the method of stationary phase to this integral, deforming the
path of integration to pass through a point where the derivative of the integrand
vanishes and then expanding as a Gaussian integral around this point to get the
desired asymptotic expansion. We use the standard (and easy) expansion
\[
\frac1{(x;{\rm e}^{-h})_\infty}  =
\exp\biggl(\frac{\Li_2(x)}h + \frac12 \log\biggl(\frac1{1-x}\biggr)
\+\frac x{1-x} \frac h{12} + \O\big(h^2\big)\biggr), \qquad h\to0,
\]
where $\Li_2(x)$ is the dilogarithm function, to find that the logarithm of
the integrand has an asymptotic expansion of the form
$\sum_{n=-1}^\infty A_n(u)h^n$, where
\[
A_{-1}(u)  =  -2\pi^2\bigg(u-\h\bigg)^2 \+ 2 \Li_2(\mathbf{e}(u)) .
\]
The function $A_{-1}(u)$ has two local maxima at $u=\h\pm\frac13$.
A careful analysis of each of the local maxima, whose details we omit, reproduces
each of the two terms in the asymptotic expansion~\eqref{MainConj}. A similar
analysis can be done for the asymptotics of the series $G_1(q)$ at $q=1$
using equation~\eqref{eq.Gsimple}. All of this was sketched for $q=1$
($q={\rm e}^{-h}$, $h\searrow0$), however it can be extended to the case of $q=\z {\rm e}^{-h}$
following ideas similar to those discussed in~\cite{GZ:asymptotics}. Finally,
we mention that in principle the formulas we have given for~$5_2$ would allow
us to compute the asymptotics for this case too, but we have not done this.

\subsection[The two matrix-valued cocycles for the 4\_1 knot agree]{The two matrix-valued cocycles for the $\boldsymbol{4_1}$ knot agree}
\label{app.cocycles}

In this section, we give the proof of Theorem~\ref{thm.1}. Let us begin by
explaining the choice of matrix~$Q(q)$ of $q$-series for the $4_1$ knot
given in equation~\eqref{Q41}, using the matrix-valued function~$\bJ$ on
the rational numbers from~\cite[equation~(95)]{GZ:kashaev} whose first row is
$1$, $J_0(x)$ and $\frac{1}{2}(q J_{1}(q) -q^{-1} J_{-1}(q))$ when $q=\mathbf{e}(x)$,
where
\begin{gather*}
J_{m}(q) = \sum_{n=0}^\infty (q;q)_n \big(q^{-1};q^{-1}\big)_n q^{m n}
\end{gather*}
is a sequence of elements of the Habiro ring for integers $m$ that satisfies
the linear $q$-difference equation
\begin{gather}
\label{rec41}
J_{m+1}(q) -(2-q^m) J_{m}(q) + J_{m-1}(q) = 1, \qquad m \in \BZ .
\end{gather}
It follows that the first row of $\bJ$ is a basis for the $\BQ[q^\pm]$-module
spanned by $\{J_m(q) | m \in \BZ\}$. The recursion~\eqref{rec41} is
an inhomogeneous analogue of~\eqref{41qdiff} and the first row of $\bJ$
above explains the choice for the second column of the matrix~\eqref{Q41}.

Observe next that the elements of the matrix $J$ given in~\eqref{Q41abcd}
can be written in the form
\begin{gather}
J_{1,1}(x) = \frac{1}{\sqrt{c\sqrt{-3}}} \sum_{Z^c = \z_6}
\calD_q(Z) \calD_{q^{-1}}\big(Z^{-1}\big), \nonumber \\
J_{2,1}(x) = \frac{1}{\sqrt{c\sqrt{-3}}}
 {\rm i} \sum_{Z^c = \z_6^{-1}}
\calD_q(Z) \calD_{q^{-1}}\big(Z^{-1}\big), \nonumber \\
J_{1,2}(x) = \frac{1}{2\sqrt{c\sqrt{-3}}}
\sum_{Z^c = \z_6} \big(q^{n+1}-q^{-n-1}\big)
\calD_q(Z) \calD_{q^{-1}}\big(Z^{-1}\big), \nonumber \\
J_{2,2}(x) = \frac{1}{2\sqrt{c\sqrt{-3}}} {\rm i}
\sum_{Z^c = \z_6^{-1}} \big(q^{n+1}-q^{-n-1}\big)
\calD_q(Z) \calD_{q^{-1}}\big(Z^{-1}\big),\label{J41D}
\end{gather}
where $c=\den(x)$ and $q=\mathbf{e}(x)$ where $\calD_\z(x)$ is the renormalized version
of the cyclic quantum dilogarithm $D_\z(x)$ given by
\begin{gather*}
\calD_\z(x)  =  {\rm e}^{-1/2 s(a,c)} D_\z(x), \qquad
D_\z(x)  =  {\rm e}^{-1/2 s(a,c)} \exp \Biggl(\sum_{j=1}^{c-1} \frac{j}{c} \log\big(1-\z^j x\big)
\Biggr)
\end{gather*}
when $\z=\mathbf{e}(a/c)$, where $s(a,c)$ is the Dedekind sum~\cite{Rademacher} and
where the logarithm is the principal one away from the cut at the negative real
axis and equals to the average one on the cut. The cyclic quantum dilogarithm
appears in the expansion of Faddeev's quantum dilogarithm at roots of unity
(see for example~\cite{GK:evaluation, Kashaev:star}) and plays a key role in
the definition of the near units associated to elements of the Bloch
group~\cite{CGZ}.

With the notation of Section~\ref{sub.cocycles}, the $2 \times 2$ matrix
$J$ from~\eqref{Q41abcd} has determinant $1$, hence its inverse is given by
$J^{-1}=\sma {J_{2,2}}{-J_{1,2}}{-J_{2,1}}{J_{1,1}}$. Since
$\lambda_S(\a)=\tfrac{1}{\text{den}(\a)\text{num}(\a)}$~\cite[Section~3.1]{GZ:kashaev}
(where $\text{num}(\a)$ and $\den(\a)>0$ denote the numerator and the denominator
of a rational number), it follows that when $\g=S$, the cocycle~\eqref{Wdef} is
given by
\begin{align}
W_S^\Hab(\a) ={}&
\begin{pmatrix}
 \hphantom{-}J_{2,2}(-1/\a) & -J_{1,2}(-1/\a) \\ -J_{2,1}(-1/\a) & \hphantom{-}J_{1,1}(-1/\a)
\end{pmatrix}
\begin{pmatrix}
 {\rm e}^{\tfrac{C}{\text{den}(\a)\text{num}(\a)}} & 0 \\
 0 & {\rm e}^{-\tfrac{C}{\text{den}(\a)\text{num}(\a)}}
\end{pmatrix} \nonumber\\
&{}\times \begin{pmatrix}
 J_{1,1}(\a) & J_{1,2}(\a) \\ J_{2,1}(\a) & J_{2,2}(\a)
\end{pmatrix}\label{W41habS}
\end{align}
for $\a$ a positive rational number and $C  =  V/(2\pi)  =  0.32\dots$ with
$V$ as in~\eqref{eq.V41},
On the other hand, the $2\times 2$ matrix $Q(\tau)$ of equation~\eqref{Q41}
has determinant $1$, and when $\g=S$, the cocycle~$W^\BR_S$ of equation~\eqref{Wcoc} is given by
\begin{gather}
\label{W41S}
W^\BR_S(\t) =
\begin{pmatrix}
 \hphantom{-}Q_{2,2}(-1/\t) & -Q_{1,2}(-1/\t) \\ -Q_{2,1}(-1/\t) & \hphantom{-}Q_{1,1}(-1/\t)
\end{pmatrix}
\begin{pmatrix}
\t^{-1} & 0 \\ 0 & 1
\end{pmatrix}
\begin{pmatrix}
 Q_{1,1}(\t) & Q_{1,2}(\t) \\ Q_{2,1}(\a) & Q_{2,2}(\t)
\end{pmatrix} .
\end{gather}
This and the factorization~\eqref{eq.prop1} of the state-integral implies that
$W^\BR_S(\t)_{\s_2,\s_1}$ equals to the state-integral $Z_{4_1}(\tau)$, up to
multiplication by elementary factors that involve a $(q/\tq)^{1/24}$, a root
of unity, a rational number and a square root of $\t$.

To identify the two cocycles~\eqref{W41habS} and~\eqref{W41S}, we use a
``factorization'' property of state integrals at
positive rational points (i.e., a bilinear expression of a vector of functions of
$\tau \in \BQ$ and $-1/\tau$) of the first author and Kashaev~\cite{GK:evaluation},
similar to the ``factorization'' of state integrals when $\tau \in \BC\sm\BR$
of the first author and Kashaev~\cite{GK:qseries}. These two factorization properties
of the state-integral are related, but we not know how to deduce one from the other.
Note also that Theorem 1.1 of~\cite{GK:evaluation} proves the needed factorization
for all 1-dimensional state integrals at positive rational numbers, and this
covers the case of all three knots (namely, the $4_1$, $5_2$ and $(-2,3,7)$ pretzel
knots) and all of their descendant state-integrals of interest to us.



We need to show that for all positive rational numbers $\a$, we have
\begin{gather}
\label{W=W41}
W_S(\a)  =  W^\Hab_S(-\a) .
\end{gather}

We will focus on the equality of the $(\s_2,\s_1)$ entries in the above
equality, in which case $W_S(\a) \Tsim Z_{4_1}(\a)$, where $\Tsim$ means equality
up to elementary factors. The proof also matches those elementary factors, and
moreover works for the remaining three entries of the above equation, since they
are all given by descendant state-integrals.

We write $\a=M/N$ with $M$ and $N$ fixed coprime positive integers, and let
$C=V/(2\pi)$, where $V=2.02\dots$ is the volume of the $4_1$ knot.
In~\cite[Theorem 1.1]{GK:evaluation}, it was shown that
\begin{align}
Z_{4_1}(M/N) ={}& \z_{24 MN}^{5-6N+3(M+N+1)^2}
\bigl( {\rm e}^{-\frac{C}{MN}} \z_{MN}^{-1} P_{M/N}\big(z_+,\th^+_N,\th^+_M\big)G_{M,N}\big(\th^+_N,\th^+_M\big) \nonumber
\\  &{}
-{\rm e}^{\frac{C}{MN}} \z_{MN} P_{M/N}\big(z_-,\th^-_N,\th^-_M\big)G_{M,N}\big(\th^-_N,\th^-_M\big) \bigr),\label{Irat}
\end{align}
where
\begin{alignat*}{4}
&z_+= \mathbf{e}(1/6),\qquad && \th^+_N = \mathbf{e}(1/(6N)),\qquad && \th^+_M= \mathbf{e}(1/(6M)), &\\
&z_-= \mathbf{e}(5/6),\qquad && \th^-_N = \mathbf{e}(5/(6N)),\qquad && \th^-_M= \mathbf{e}(5/(6M)), &
\end{alignat*}
$D_\z(x)=\prod_{k=1}^{c-1} \big(1-\z^k x\big)^{k/c}$ where $\z$ is a $c$-th root
of unity, $\z_{N}=\mathbf{e}(1/N)$,
\begin{gather*}
P_{M/N}\big(z_+,\th^+_N,\th^+_M\big) =
\frac{1}{\sqrt{-3}} \frac{(1-z_+)^{2+1/M+1/N+1/(2 MN)}}{\big(1-\th^+_N\big)^2 \big(1-\th^+_M\big)^2
D_{\z_N^M}\big(\th^+_N\big)^2 D_{\z_M^N}\big(\th^+_M\big)^2}
\end{gather*}
and
\begin{align*}
G_{M,N}\big(\th^+_N,\th^+_M\big) ={}& \frac{1}{\sqrt{MN}}
\sum_{k=0}^{MN-1} \frac{1}{\big(\z_N^M \th^+_N; \z_N^M\big)_{Pk}
\big(\z_N^{-M} (\th^+_N)^{-1}; \z_N^{-M}\big)_{Pk}} \\ &{}\times
\frac{1}{\big(\z_M^N \th^+_M; \z_M^N\big)_{Qk}\big(\z_M^{-N} \big(\th^+_M\big)^{-1}; \z_M^{-N}\big)_{Qk}}
\end{align*}
and $P$ and $Q$ are integers with $MP+NQ=1$ ($G_{M,N}$ is independent of the choice
of $P$ and $Q$). Likewise, we can define $P_{M/N}\big(z_+,\th^+_N,\th^+_M\big)$ and
$G_{M,N}\big(\th^-_N,\th^-_M\big)$.
Define
\begin{gather*}
S_{\s_1}(\a) = \frac{1}{\sqrt{N\sqrt{-3}}}
\frac{1}{|D_{\z_\a}\big(\th^+_N\big)|^2} \sum_{k=1}^{N-1} \frac{1}{|\big(\z_\a \th^+_N;\z_\a\big)|^2},
\\
S_{\s_2}(\a) = \frac{1}{\sqrt{N\sqrt{-3}}}
\frac{1}{|D_{\z_\a}\big(\th^-_N\big)|^2} \sum_{k=1}^{N-1} \frac{1}{|\big(\z_\a \th^-_N;\z_\a\big)|^2},
\end{gather*}
where $N$ is the denominator of $\a$ and $\lambda(x)$ is the tweaking function
from~\cite[Section 3.1]{GZ:kashaev}. Moving the quantum factorials from the
denominator to the numerator and using equation~\eqref{J41D}, it follows that
\[
S_{\s_1}(\a)  =  {\rm e}^{\lambda(\a)C} J_{1,1}(\a),
\qquad S_{\s_2}(\a)  =  {\rm e}^{-\lambda(\a)C} J_{2,1}(\a) .
\]

\begin{Lemma}
Up to a prefactor, for every $\a \in \BQ$ with $\a>0$, we have
\begin{gather}
\label{IR}
Z_{4_1}(\a)  =  {\rm e}^{-C/(MN)}
J_{1,1}(-1/\a) J_{2,1}(\a) - {\rm e}^{C/(MN)}
J_{1,1}(\a) J_{2,1}\bigl(-\a^{-1}\bigr) .
\end{gather}
\end{Lemma}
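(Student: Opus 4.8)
The plan is to derive \eqref{IR} directly from the ``rational factorization'' \eqref{Irat} of $Z_{4_1}(M/N)$ proved in \cite[Theorem~1.1]{GK:evaluation}, which already displays $Z_{4_1}(M/N)$ as a root of unity times
\[
{\rm e}^{-C/(MN)}\z_{MN}^{-1}P_+G_+\;-\;{\rm e}^{C/(MN)}\z_{MN}P_-G_-,
\]
with $P_\pm=P_{M/N}(z_\pm,\th^\pm_N,\th^\pm_M)$ a product of cyclic dilogarithms $D$ and elementary powers of $1-z_\pm$, $1-\th^\pm_N$, $1-\th^\pm_M$, and $G_\pm=G_{M,N}(\th^\pm_N,\th^\pm_M)$ a single sum over $k=0,\dots,MN-1$. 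The goal is to rewrite each of these two terms as a product of two $J$-type quantities, one at $\a=M/N$ and one at $-1/\a=-N/M$, with the factors ${\rm e}^{\mp C/(MN)}$ surviving intact.

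The one conceptual point is to factor the $G$-sum. Fix $P,Q\in\Z$ with $MP+NQ=1$; recall that $G_{M,N}$ does not depend on this choice. Since $\gcd(M,N)=1$, the root of unity $\z_N^M$ has exact order $N$, so $(\z_N^M\th^\pm_N;\z_N^M)_{Pk}$ depends on $k$ only through $Pk\bmod N$ and $(\z_M^N\th^\pm_M;\z_M^N)_{Qk}$ only through $Qk\bmod M$; moreover $P$ is a unit mod $N$ and $Q$ a unit mod $M$. Hence by the Chinese Remainder Theorem the map $k\bmod MN\mapsto(Pk\bmod N,\,Qk\bmod M)$ is a bijection onto $\Z/N\times\Z/M$, and, noting that the second argument of each Pochhammer is the complex conjugate of the first (all of $\th^\pm_N,\th^\pm_M,\z_N^M,\z_M^N$ lying on the unit circle), the sum splits:
\[
G_{M,N}(\th^\pm_N,\th^\pm_M)=\frac1{\sqrt{MN}}\Bigl(\sum_{k=0}^{N-1}\bigl|(\z_N^M\th^\pm_N;\z_N^M)_k\bigr|^{-2}\Bigr)\Bigl(\sum_{k=0}^{M-1}\bigl|(\z_M^N\th^\pm_M;\z_M^N)_k\bigr|^{-2}\Bigr).
\]
This is what turns the one-dimensional rational factorization into a genuinely bilinear one, the two relevant cusps being $\a=M/N$ (denominator $N$) and $-1/\a=-N/M$ (denominator $M$).

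The $P$-factor splits along the same lines, its $\th^\pm_N$-dependent part attaching to the $N$-sum and its $\th^\pm_M$-dependent part to the $M$-sum, the leftover scalar $\tfrac1{\sqrt{-3}}(1-z_\pm)^{2+1/M+1/N+1/(2MN)}$ being shared out between them. Rewriting each $D_\z$ in terms of the Dedekind-normalized $\calD_\z$ of \eqref{J41D}, and moving the $q$-factorials from denominator to numerator exactly as in the passage from the $J_{i,1}$ of \eqref{Q41abcd} to the sums $S_{\sigma_1},S_{\sigma_2}$, one finds that the $\th^\pm_N$-part of $P_\pm$ times the $N$-sum is an elementary multiple of $S_{\sigma_1}(\a)$ or $S_{\sigma_2}(\a)$, and that the $\th^\pm_M$-part of $P_\pm$ times the $M$-sum, which is naturally based at $N/M$ and is carried over to $-1/\a$ by the conjugation relation $J_{1,1}(-x)\propto J_{2,1}(x)$, is an elementary multiple of $S_{\sigma_1}(-1/\a)$ or $S_{\sigma_2}(-1/\a)$. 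Finally $S_{\sigma_1}(x)={\rm e}^{\lambda(x)C}J_{1,1}(x)$ and $S_{\sigma_2}(x)={\rm e}^{-\lambda(x)C}J_{2,1}(x)$ with $\lambda=\lambda_S$, together with $\lambda_S(M/N)=1/(MN)$ and $\lambda_S(-N/M)=-1/(MN)$, convert the two terms of \eqref{Irat} into ${\rm e}^{-C/(MN)}J_{1,1}(-1/\a)J_{2,1}(\a)$ and $-{\rm e}^{C/(MN)}J_{1,1}(\a)J_{2,1}(-1/\a)$, which is \eqref{IR}.

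I expect the hard part to be bookkeeping rather than anything structural: one must verify that the outer root of unity $\z_{24MN}^{5-6N+3(M+N+1)^2}$, the factors $\z_{MN}^{\mp1}$, the fractional powers of $1-z_\pm$, the $\sqrt{-3},\sqrt M,\sqrt N,\sqrt{MN}$ normalizations, and the Dedekind sums produced by $D_\z\mapsto\calD_\z$ recombine --- \emph{independently on the two branches} --- into one and the same scalar, which can then be pulled out as the single ``prefactor'' permitted by the statement; and, intertwined with this, one must track all complex conjugations, the Galois labels $\sigma_1,\sigma_2$ (equivalently the occurrences of $\z_6$ versus $\z_6^{-1}$) and the sign of $\lambda_S(-N/M)$ consistently, so that precisely the cross pairings $J_{1,1}(-1/\a)J_{2,1}(\a)$ and $J_{1,1}(\a)J_{2,1}(-1/\a)$ appear and not diagonal ones. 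Once this is done the Lemma follows, and, as noted in the text, running the identical computation on the other three matrix entries --- which are descendant state integrals factorized by the same Theorem~1.1 of \cite{GK:evaluation} --- upgrades \eqref{IR} to the matrix identity $W_S(\a)=W^\Hab_S(-\a)$ and hence to Theorem~\ref{thm.1}.
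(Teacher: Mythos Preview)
Your approach is essentially the paper's: both start from the rational factorization~\eqref{Irat}, split $G_{M,N}$ into a product of an $N$-sum and an $M$-sum (the paper by invoking \cite[Lemma~2.2]{GK:evaluation}, you by the explicit CRT bijection $k\mapsto(Pk\bmod N,Qk\bmod M)$), and then identify each factor with a $J_{i,1}$ via the $S_{\sigma_j}$.

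One point to tighten. Your assertion that $(\z_N^M\th^\pm_N;\z_N^M)_{Pk}$ depends only on $Pk\bmod N$ is false as stated: shifting $Pk$ by $N$ multiplies this Pochhammer by $1-(\th^\pm_N)^N=1-z_\pm\neq1$. What \emph{is} $N$-periodic is the product with its conjugate, i.e.\ $\bigl|(\z_N^M\th^\pm_N;\z_N^M)_{Pk}\bigr|^2$, and only because $|1-z_\pm|^2=1$ for the $4_1$ critical points $z_\pm={\rm e}^{\pm\pi {\rm i}/3}$; equivalently, in the language of \cite{GK:evaluation}, because $g(z_\pm)=(-z_\pm)(1-z_\pm)^{-2}=1$. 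This is exactly the ``key observation'' the paper isolates, and it is the one non-generic arithmetic fact that makes the CRT splitting go through, so it should be stated explicitly rather than absorbed into the phrase ``depends only on $Pk\bmod N$''.
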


\begin{proof}
Let $\a=M/N$ with $M$, $N$ coprime positive integers as before. As was observed
in~\cite{GK:evaluation}, when $M=1$ we can choose $P=1$ and $Q=0$ and then
$P_{1/N}$ and $G_{1,N}$ are independent of $\th^+_M$ and given by
\[
P_{1/N}\big(z_+,\th^+_N\big) =
\frac{1}{\sqrt{-3}} \frac{(1-z_+)^{1 + 3/(2N)}}{\big(1-\th^+_N\big)^2 D_{\z_N}\big(\th^+_N\big)^2 }
\]
and
\[
G_{1,N}\big(\th^+_N\big) = \frac{1}{\sqrt{N}}
\sum_{k=0}^{N-1} \frac{1}{\big(\z_N \th^+_N; \z_N\big)_{k}
\big(\z_N^{-1} \big(\th^+_N\big)^{-1}; \z_N^{-1}\big)_{k}} .
\]
Now, the proof of~\cite[Lemma~2.2]{GK:evaluation} implies the following factorization:
of $G_{M,N}$
\begin{gather*}
\label{Sfac}
G_{M,N}\big(\th^+_N,\th^+_M\big) = G_{1,N} \big(\th^+_N\big) G_{M,1} \big(\th^+_M\big),
\end{gather*}
which was unfortunately not stated explicitly in~\cite{GK:evaluation}. The
key observation, using the notation of~\cite[Lemma~2.2]{GK:evaluation}, is that
in the context of~\cite[Theorem~1.1]{GK:evaluation}, we have $g^+_N(x;q_+)=g^-_N(y;q_-)=
(-x)^A(1-x)^B=1$, which implies that $g^+_{k+N}(x_+;q_+)=g^+_{k}(x_+;q_+)$ and
$g^-_{k+N}(x_-;q_-)=g^-_{k}(x_-;q_-)$. This, together with equation~\eqref{Irat} and
the fact that the tweaking function $\lambda$ satisfies
$\lambda(x)-\lambda(-1/x)=1/(\text{den}(x)\text{num}(x))$ (where $\text{num}(x)$
and $\text{den}(x)>0$ denotes the numerator and denominator of a rational number)
concludes the proof of the lemma.
\end{proof}

On the other hand, equation~\eqref{W41habS} implies that
\begin{gather}
\label{WS41v}
W^\Hab_S(\a)_{\s_2,\s_1}= {\rm e}^{-C/(MN)} J_{1,1}(-1/\a) J_{2,1}(\a) -
{\rm e}^{C/(MN)} J_{1,1}(\a) J_{2,1} \bigl(-\a^{-1}\bigr) .
\end{gather}
This and the previous lemma completes the proof of the $(\s_2,\s_1)$-entry
of~\eqref{W=W41}.
\qed

We may say that equations~\eqref{WS41v} and~\eqref{IR} syntactically agree.

More generally, in~\cite{GK:evaluation} the evaluation of the
1-dimensional state integrals
\[
Z_{A,B}(\tau)  =  \int_{\BR+ {\rm i} \epsilon} \Phi_\bb(x)^B {\rm e}^{-A \pi {\rm i} x^2} {\rm d}x, \qquad
\tau=\bb^2 \in \BC'
\]
at rational points was given, where $A$ and $B$ are integers with $B > A > 0$.
Following the notation of~\cite[Section~1.3]{GK:evaluation}, fix a pair of coprime
positive integers $M$ and $N$ and define
\begin{equation*}
\mathsf{b}= \sqrt{M/N}, \qquad \mathsf{s}= \sqrt{MN}
\end{equation*}
and
\begin{equation*}
\operatorname{g}(z)=
(-z)^{A} (1-z)^{-B} \in \BQ\big[z^{\pm 1}\big]
\end{equation*}
and
\begin{equation*}
\calS =\big\{ w \mid
\operatorname{g}\big ({\rm e}^{2\pi\mathsf{s}w}\big)=1,\,
 0 < \mathsf{s} \operatorname{Im}(w)-\lambda < 1 \big\},
\end{equation*}
where $\lambda$ is a generic real number such that
\begin{equation*}
-(M+N)/2 < \lambda < 0 .
\end{equation*}
Note that if $w \in\calS$, then ${\rm e}^{2\pi\mathsf{s}w}$ is an algebraic
number with a fixed choice of $N$ and $M$-th roots.
For $\a=M/N$ and $w \in \calS$, define
\begin{gather*}
S_w(\a) = \frac{(1-z)^{\frac{B}{2N} + \lambda(\a)\frac{B}{4}}}{\sqrt{N z g'(z)}}
\frac{{\rm e}^{-\frac{\lambda(\a) B}{2\pi {\rm i}}
 R(z)}}{D_{\z_\a}(\th^+)^B} \sum_{ k \bmod N} \bigl(-\th^+\bigr)^{A k}
\frac{\z_\a^{\frac{A}{2}k(k+1)}}{(\z_\a \th^+;\z_a)_k^B},
\end{gather*}
where $R(z)$ is the Rogers dilogarithm, and let $S(\a)=(S_w(\a))_{w \in \calS}$
and $S^\text{op}(\a)=(S_{\overline{w}}(\a))_{w \in \calS}$.
Then, using the factorization~\eqref{Sfac} and~\cite[Theorem 1.1]{GK:evaluation},
takes the form
\begin{gather*}
Z_{A,B}(\a) = p_\a S^\text{op}(-\a)^t S\big(\a^{-1}\big) = p_\a \sum_{w \in \calS}
S_w(-\a) S_{\overline{w}}\big(\a^{-1}\big),
\end{gather*}
where $ p_\a=\mathbf{e}\big((B+3A(M+N+1)^2-6MN\big)/(24MN))$.

\subsection[The six q-series for the 5\_2 knot]{The six $\boldsymbol{q}$-series for the $\boldsymbol{5_2}$ knot}
\label{sub.q52}

The six $q$-series of the $5_2$ knot are given by $q$-hypergeometric
sums and their $\ve$-deformations (see~\eqref{eq.52deform}),
and this allows for an efficient recursive computation of their coefficients
similar to the one for the $4_1$ knot given in~\eqref{eq.G01coeff}.
Explicitly, the six $q$-series are given by
\begin{gather}
\label{eq.52hHNahm}
H^+_j(q) = \sum_{m=0}^\infty t_m(q) p_m^{(j)}(q) , \qquad
H^-_j(q) = \sum_{m=0}^\infty T_m(q) P_m^{(j)}(q),
\qquad j=0, 1, 2
\end{gather}
with
\begin{gather}
\label{tT52}
t_m(q)  =  \frac{q^{m(m+1)}}{(q;q)^3_m} , \qquad
T_m(q)  =  \frac{(-1)^mq^{m(m+1)/2}}{(q;q)_m^{ 3}} ,
\end{gather}
and
\begin{gather*}
   p_m^{(0)}(q) =1 , \qquad p_m^{(1)}(q) =\frac{1+3\E_1(q)}4
 +\sum_{j=1}^m\frac{2+q^j}{1-q^j} ,\\\
 p_m^{(2)}(q) = p_m^{(1)}(q)^2 - \frac{3+\calE_2(q)}{24}
 +\sum_{j=1}^m\frac{3q^j}{(1-q^j)^2} , \\
   P_m^{(0)}(q) =1 , \qquad P_m^{(1)}(q) = \frac{3\E_1(q)-1}4
 +\sum_{j=1}^m\frac{1+2q^j}{1-q^j} ,\\
 P_m^{(2)}(q) = P_m^{(1)}(q)^2- \frac{\calE_2(q)-3}{24}
 +\sum_{j=1}^m\frac{3q^j}{(1-q^j)^2} .
\end{gather*}
Here $\calE_1(q)$ and $\calE_2(q)$ are the weight~1 and weight~2 Eisenstein series
defined by~\eqref{eq.ei1} and $\calE_2(q) = 1-24 \sum_{n \geq 1} \frac{q^n}{(1-q^n)^2}$,
respectively.
Since each of $t_m$, $T_m$, $p_m^{(j)}$ and $P_m^{(j)}$ can be obtained from
its predecessor in just $\text O(1)$ operations, we can use the
formulas~\eqref{eq.52hHNahm} to compute a several thousand coefficients of
$H_j^\pm$ efficiently.

\begin{Remark}
Our notation $\big(p^{(0)}_m, p^{(1)}_m, p^{(2)}_m\big)$,
$\big(P^{(0)}_m, P^{(1)}_m, P^{(2)}_m\big)$, $\big(H^+_0,H^+_1,H^+_2\big)$,
 $(H^-_0,H^-_1,\allowbreak H^-_2)$  coincide with the quantities denoted $(p_{3,m},p_{2,m},p_{1,m})$,
$\big(P_{1,m},\frac12P_{2,m},-P_{3,m}\big)$, $\big(g_3,g_2,g_1+\frac16\calE_2 g_3\big)$,
$\big(G_1,\frac12G_2,-G_3-\frac16\calE_2 G_1\big)$
in \cite[Section~1.4]{GK:qseries}. Our formula~\eqref{eq.52} matches with
\cite[Corollary~1.8]{GK:qseries} using the above translation of notation combined with the
quasi-modularity property
\[
E_2(\ti\tau)=\tau^2 E_2(\tau) - \frac{6 {\rm i} \tau}{\pi}
\]
of $E_2(\t):=\calE_2 \big({\rm e}^{2\pi {\rm i}\t}\big)$ (see~\cite[Proposition 6]{Za:123}). Note also
that if we define the functions $\E_1(q)$ and $\E_2(q)$ for~$|q|>1$ by
$\E_k(q)=-\E_k\big(q^{-1}\big)$ ($k=1,2)$, then $T_m(q)=t_m\big(q^{-1}\big)$ and
$P_m^{(j)}(q)=(-1)^jp_m^{(j)}\big(q^{-1}\big)$. It follows that both of the above
$q$-hypergeometric formulas are convergent for $|q| \neq 1$ and that they are
related by
\begin{gather*}
H^{-}_j(q)=(-1)^j H^{+}_j\big(q^{-1}\big), \qquad j=0,1,2 .
\end{gather*}
\end{Remark}

\subsection[epsilon-deformation and the factorization of the state]{$\boldsymbol{\ve}$-deformation and the factorization of the state
integral}
\label{sub.estate}

In this section, we comment further between a connection of the factorization
proof of state integrals taken from~\cite{GK:qseries} and the $\ve$-deformations
of difference/differential equations. Whereas only finitely many $q$-series
appear in the factorization of a state integral (via the residue theorem),
their $\ve$-deformation leads to a sequence of $q$-series that contains further
information, as Wheeler has recently found out~\cite{Wheeler:thesis}.
Consider the one-dimensional state integral
\begin{gather*}
\label{DefIAB}
\Psi_{A,A+\ti A}(\tau)
 = \int_{\BR + {\rm i} \varepsilon} \Phi_{\sqrt{\tau}}(x)^{A+\ti A}
 {\rm e}^{-A \pi {\rm i} x^2} {\rm d}x, \qquad z\in\BC'
\end{gather*}
from~\cite{GK:qseries} (which was denoted $\calI_{A,A+\ti A}(\sqrt z)$
in~ ibid) for positive integers $A, \tilde A >0$. Let us briefly recall
its factorization following~\cite{GK:qseries} and their notation.
Since $A, \ti A >0$, it follows that the integral is absolutely convergent.
The idea \big(when $\tau=b^{1/2}$ is in the upper half-plane\big) is to move the contour
of integration upwards, and collect the residues from the poles of the integrand.
The quantum dilogarithm is a meromorphic function of $x$ with poles given by
\begin{gather*}
x_{m,n}=c_b + {\rm i} b m + {\rm i} b^{-1}n =
{\rm i} b\biggl(m+\frac{1}{2}\biggr) +
{\rm i} b^{-1}\biggl(n+\frac{1}{2}\biggr), \qquad m, n \in \BN .
\end{gather*}
The quasi-periodicity of $\Phi_b(x)$
\begin{align*}
&\frac{\Phi_b(x+c_b+{\rm i}b)}{\Phi_b(x+c_b)}= \frac{1}{1-q {\rm e}^{2 \pi b x}},
\\
&\frac{\Phi_b\big(x+c_b+{\rm i}b^{-1}\big)}{\Phi_b(x+c_b)}=
\frac{1}{1-\tq^{-1} {\rm e}^{2 \pi b^{-1} x}}
= -\frac{\tq {\rm e}^{-2 \pi b^{-1} x}}{1-\tq {\rm e}^{-2 \pi b^{-1} x}}
\end{align*}
implies that
\begin{align}
\notag
\Phi_b(x+ x_{m,n})& = \Phi_b(x+c_b)
\frac{1}{{\rm i}(q {\rm e}^{2 \pi b x};q{\rm i})_m}
\frac{(-1)^n \tq^{\frac{n(n+1)}{2}} {\rm e}^{-2 \pi b^{-1} x n }}{
{\rm i}(\tq {\rm e}^{-2 \pi b^{-1} x};\tq{\rm i})_n} \\
\label{eq.Phixmn}
&  = \Phi_b(x+c_b)
\frac{1}{(q {\rm e}^\ve;q)_m}
\frac{(-1)^n \tq^{\frac{n(n+1)}{2}} {\rm e}^{\tv n }}{
\big(\tq {\rm e}^{\tv};\tq\big)_n} ,
\end{align}
where $q={\rm e}^{2 \pi {\rm i} b^2}$, $\tq={\rm e}^{-2 \pi {\rm i} b^{-2}}$,
$\ve =2 \pi b x$ and $\tv=-2 \pi b^{-1} x$.
Moreover, the formula
$\Phi_b(x)  =  \big({\rm e}^{2 \pi b (x+c_b)};q\big)_\infty/ \big({\rm e}^{2 \pi b^{-1} (x-c_b)};\tq\big)_\infty
$
implies that
\begin{align}
\label{eq.phicb}
&\Phi_b(x+c_b) = \frac{1}{1-{\rm e}^{2 \pi b^{-1}x}}
\frac{\big(q {\rm e}^{2 \pi b x};q\big)_\infty}{\big(\tq {\rm e}^{2 \pi b^{-1} x};\tq\big)_\infty}
 =  \frac{1}{1-{\rm e}^{-\tv}}
\frac{(q {\rm e}^\ve;q)_\infty}{(\tq {\rm e}^{-\tv};\tq)_\infty} .
\end{align}
Combining, we find a product that decouples
\begin{align}
\label{eq.phimn}
\Phi_b(x+ x_{m,n})&  =  \frac{(q {\rm e}^\ve;q)_\infty}{(\tq {\rm e}^{-\tv};\tq)_\infty}
\frac{1}{{\rm e}^{\tv/2}-{\rm e}^{-\tv/2}}
\frac{(-1)^n \tq^{\frac{n(n+1)}{2}}
{\rm e}^{(n+\frac{1}{2})\tv}}{(q {\rm e}^\ve;q)_m (\tq {\rm e}^{\tv};\tq)_n}
\end{align}
and an exponential that also decouples
\begin{align}
\label{eq.expxmn}
{\rm e}^{-\pi {\rm i} (x+x_{m,n})^2}
&= {\rm e}^{\frac{\pi {\rm i}}{2}} \biggl(\frac{q}{\tq}\biggr)^{\frac{1}{8}}
q^{\frac{m(m+1)}{2}} \tq^{-\frac{n(n+1)}{2}} (-1)^{m+n}
{\rm e}^{{\rm i} \frac{\ve\tv}{4\pi} + \ve(m+\frac{1}{2})-\tv(n+\frac{1}{2})} .
\end{align}
A similar computation works when $\Im(\tau)<0$, and the result is
the following.

\begin{theoremx}[\cite{GK:qseries}]
For $\tau \in \BC\sm\BR$, we have
\begin{align*}
\Psi_{A,A+\ti A}(\tau)={}& 2 \pi {\rm i} {\rm e}^{\frac{\pi {\rm i} A}{2}}
\biggl(\frac{q}{\tq}\biggr)^{\frac{A}{8}} \\
&{} \times
\mathrm{Res}_{x=0} \biggl( F_{A,\ti A}(q,\ve) F_{\ti A, A}(\tq,\tv)
{\rm e}^{{\rm i} A \frac{\ve\tv}{4\pi}} 
\frac{(q {\rm e}^\ve;q)_\infty^{A+\ti A}}{(\tq {\rm e}^{-\tv}; \tq)_\infty^{A+\ti A}}
\frac{1}{ ({\rm e}^{\tv/2}-{\rm e}^{-\tv/2})^{A+\ti A}} 
\biggr),
\notag
\end{align*}
where
\begin{align*}
F_{A,\ti A}(q,\ve)& =  \sum_{m=0}^\infty (-1)^{A m}
\frac{q^{A \frac{m(m+1)}{2}} {\rm e}^{A \ve (m+\frac{1}{2})}}{(q {\rm e}^\ve;q)_m^{A + \ti A}} .
\end{align*}
\end{theoremx}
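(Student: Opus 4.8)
The plan is to carry out the residue computation of~\cite{GK:qseries}, for which all the ingredients have just been assembled. Write $b=\sqrt\tau$, $q={\rm e}^{2\pi {\rm i} b^2}$, $\tq={\rm e}^{-2\pi {\rm i} b^{-2}}$, and let $f(x)=\Phi_b(x)^{A+\ti A}{\rm e}^{-\pi {\rm i} A x^2}$ denote the integrand; I treat the case $\Im(\tau)>0$, the case $\Im(\tau)<0$ following from the symmetry $\Phi_b=\Phi_{b^{-1}}$ of Faddeev's quantum dilogarithm (equivalently, from the analogous downward contour shift), and I may assume $\tau$ generic, the general case following by continuity. The first step is analytic. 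The function $\Phi_b$ has simple poles exactly at the pairwise distinct points $x_{m,n}=c_b+{\rm i} bm+{\rm i} b^{-1}n$ with $m,n\in\BN$, so $f$ has poles of order $A+\ti A$ there and nowhere else, and since $\Re(b),\Re(b^{-1})>0$ they all lie above the contour $\BR+{\rm i} 0$. The hypothesis $A,\ti A>0$ guarantees, through the known growth of $\Phi_b$ along horizontal lines, both that the integral converges absolutely and that the contour may be pushed upward across all the rows of poles with vanishing contribution from the line $\BR+{\rm i} T$ as $T\to\infty$, so that
\[
\Psi_{A,A+\ti A}(\tau)=2\pi {\rm i}\sum_{m,n\ge 0}\Res_{x=x_{m,n}}f(x)=2\pi {\rm i}\sum_{m,n\ge 0}\Res_{x=0}f(x+x_{m,n}).
\]

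The second step is to evaluate $f(x+x_{m,n})$, which is bookkeeping with the quasi-periodicity formulas~\eqref{eq.Phixmn}--\eqref{eq.expxmn}. Setting $\ve=2\pi bx$ and $\tv=-2\pi b^{-1}x$, I raise the factorization~\eqref{eq.phimn} of $\Phi_b(x+x_{m,n})$ to the power $A+\ti A$ and the factorization~\eqref{eq.expxmn} of ${\rm e}^{-\pi {\rm i}(x+x_{m,n})^2}$ to the power $A$ and multiply. I expect the result to split into the product of an $(m,n)$-independent factor ${\rm e}^{\pi {\rm i} A/2}(q/\tq)^{A/8}{\rm e}^{{\rm i} A\ve\tv/(4\pi)}(q{\rm e}^\ve;q)_\infty^{A+\ti A}(\tq{\rm e}^{-\tv};\tq)_\infty^{-(A+\ti A)}({\rm e}^{\tv/2}-{\rm e}^{-\tv/2})^{-(A+\ti A)}$, an $m$-dependent factor $(-1)^{Am}q^{A m(m+1)/2}{\rm e}^{A\ve(m+1/2)}(q{\rm e}^\ve;q)_m^{-(A+\ti A)}$, and an $n$-dependent factor which, after the powers of $-1$, of $\tq$ and of ${\rm e}^{\tv}$ coming from the two sources combine (e.g.\ $(-1)^{n(A+\ti A)+An}=(-1)^{\ti A n}$ and $\tq^{(A+\ti A)n(n+1)/2-A n(n+1)/2}=\tq^{\ti A n(n+1)/2}$), equals $(-1)^{\ti A n}\tq^{\ti A n(n+1)/2}{\rm e}^{\ti A\tv(n+1/2)}(\tq{\rm e}^\tv;\tq)_n^{-(A+\ti A)}$.

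For the third step, since $|q|,|\tq|<1$ the sums over $m$ and over $n$ of these last two factors converge absolutely and locally uniformly on a punctured neighbourhood of $x=0$, which licenses interchanging $\Res_{x=0}$ with $\sum_{m,n}$; the $m$-sum is exactly $F_{A,\ti A}(q,\ve)$ and the $n$-sum is exactly $F_{\ti A,A}(\tq,\tv)$, so reinstating the common factor reproduces the asserted formula. I expect the genuinely delicate point to be the analytic justification in the first step -- the legitimacy of translating the contour to infinity, i.e.\ the uniform decay of $\Phi_b$ along horizontal lines passing between consecutive rows of poles, which is exactly where the inequalities $A>0$ and $\ti A>0$ enter and which is carried out in~\cite{GK:qseries}; the residue bookkeeping, once~\eqref{eq.Phixmn}--\eqref{eq.expxmn} are granted, is routine.
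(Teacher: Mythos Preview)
Your proof is correct and follows exactly the approach the paper itself sketches: shift the contour $\BR+{\rm i}\varepsilon$ upward past all the poles $x_{m,n}$ (using $A,\ti A>0$ for convergence and decay, as you note and as the paper indicates), then plug the factorizations~\eqref{eq.phimn} and~\eqref{eq.expxmn} raised to the powers $A+\ti A$ and $A$ respectively into the double residue sum, and regroup. Your bookkeeping of the $m$-dependent, $n$-dependent and common factors is accurate and matches the decoupling the paper describes just before stating the theorem.
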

We can think of $F_{A,\ti A}(q,\ve)$ as a function of a Jacobi variable $\ve$, or
as a power series in $\ve$ with coefficients rational functions in $q$ that
can be computed by expanding $(q {\rm e}^\ve;q)_m$ as a power series in
$\ve$. To do so, recall the $q$-series $\calE^{(m)}_\ell(q)$ \cite[equation~(29)]{GK:qseries}
\begin{gather}
\label{eq.Emell}
\calE^{(m)}_\ell(q) = \sum_{s=1}^\infty  s^{\ell-1} \frac{q^{s(m+1)}}{1-q^s}
= \sum_{s \geq 1, n >m} s^{\ell-1} q^{s n}
=\sum_{s \geq 1} \! \sigma_{\ell-1}^{(m)} q^s, \quad
\sigma_{\ell-1}^{(m)}= \sum_{d|s, s/d>m} \! d^{\ell-1}
.
\end{gather}
 The next lemma is contained
in~\cite[Proposition~2.2]{GK:qseries}. For completeness, we state it and prove it here.

\begin{Lemma}
We have
\begin{gather*}
\frac{1}{(q {\rm e}^\ve;q)_m}  =  \frac{1}{(q;q)_m} \cdot
\frac{(q;q)_\infty}{(q {\rm e}^\ve;q)_\infty} \cdot
\exp\Biggl( -\sum_{\ell=1}^\infty \calE^{(m)}_\ell(q) \frac{\ve^\ell}{\ell!}
\Biggr) .
\end{gather*}
\end{Lemma}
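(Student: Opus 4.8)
The plan is to establish the identity by computing the logarithmic derivative of the finite product $(qe^\ve;q)_m = \prod_{j=1}^m (1 - q^j e^\ve)$ with respect to $\ve$ and summing the resulting series. First I would take logarithms:
\[
-\log (qe^\ve;q)_m = -\sum_{j=1}^m \log\bigl(1 - q^j e^\ve\bigr).
\]
Expanding each $\log(1-q^j e^\ve)$ as $-\sum_{s\geq 1} \frac{q^{js} e^{s\ve}}{s}$ and then expanding $e^{s\ve} = \sum_{\ell\geq 0} \frac{(s\ve)^\ell}{\ell!}$, I would collect the coefficient of $\ve^\ell/\ell!$. For $\ell \geq 1$ this coefficient is
\[
\sum_{j=1}^m \sum_{s\geq 1} s^{\ell-1} q^{js} \;=\; \sum_{s\geq 1} s^{\ell-1} \frac{q^s(1-q^{ms})}{1-q^s} \;=\; \calE_\ell^{(0)}(q) - \calE_\ell^{(m)}(q),
\]
using the geometric sum $\sum_{j=1}^m q^{js} = \frac{q^s - q^{(m+1)s}}{1-q^s}$ and the definition~\eqref{eq.Emell} of $\calE_\ell^{(m)}(q)$. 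The $\ell = 0$ term contributes the $\ve$-independent quantity $-\log(q;q)_m$.

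Next I would observe that taking $m \to \infty$ (valid for $|q| < 1$) in the same computation gives
\[
-\log (qe^\ve;q)_\infty = -\log(q;q)_\infty - \sum_{\ell=1}^\infty \calE_\ell^{(0)}(q)\,\frac{\ve^\ell}{\ell!},
\]
since $\calE_\ell^{(m)}(q) \to 0$ as $m\to\infty$ (each term $q^{s(m+1)}/(1-q^s)$ vanishes). Subtracting, the $\calE_\ell^{(0)}$ terms cancel and I obtain
\[
\log\frac{(q;q)_\infty}{(qe^\ve;q)_\infty} - \log\frac{(q;q)_m}{(qe^\ve;q)_m} \;=\; \log (qe^\ve;q)_m - \log(q;q)_m \;=\; -\log(q;q)_m - \sum_{\ell=1}^\infty\bigl(\calE_\ell^{(0)} - \calE_\ell^{(m)}\bigr)\frac{\ve^\ell}{\ell!} + \log(qe^\ve;q)_m,
\]
which after rearranging and exponentiating yields exactly the claimed formula
\[
\frac{1}{(qe^\ve;q)_m} = \frac{1}{(q;q)_m}\cdot\frac{(q;q)_\infty}{(qe^\ve;q)_\infty}\cdot\exp\Bigl(-\sum_{\ell=1}^\infty \calE_\ell^{(m)}(q)\,\frac{\ve^\ell}{\ell!}\Bigr).
\]

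There is essentially no hard step here: the whole argument is a bookkeeping exercise in swapping the order of summation between the index $j$ of the Pochhammer product, the index $s$ from the logarithm expansion, and the index $\ell$ from the exponential expansion. The only point requiring a word of care is convergence: for $|q|<1$ and $\ve$ in a neighborhood of $0$ all the double and triple series involved converge absolutely (dominated by a geometric series in $|q|$), so the interchanges are justified; this also legitimizes the $m\to\infty$ passage. I would simply note this and present the computation as above.
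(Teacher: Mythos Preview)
Your proof is correct and follows essentially the same route as the paper's: the paper writes $\frac{(q;q)_m}{(qe^\ve;q)_m} = \frac{(q;q)_\infty}{(qe^\ve;q)_\infty} \cdot \frac{(q^{m+1}e^\ve;q)_\infty}{(q^{m+1};q)_\infty}$ and expands the logarithm of the tail factor $\prod_{n>m}\frac{1-q^ne^\ve}{1-q^n}$ directly, which is exactly your subtraction of the $m=\infty$ case from the finite-$m$ case. (One small slip: your displayed formula for $-\log(qe^\ve;q)_\infty$ should have $+\sum_{\ell\ge1}\calE_\ell^{(0)}(q)\,\ve^\ell/\ell!$, not a minus sign, consistent with your own finite-$m$ computation; with this corrected the subsequent rearrangement goes through cleanly.)
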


\begin{proof}
Using $(qx;q)_\infty = (qx;q)_\infty/\big(q^{m+1}x;q\big)_\infty$, it follows that
\begin{align*}
\frac{1}{(q {\rm e}^\ve;q)_m}  =  \frac{1}{(q;q)_m} \frac{(q;q)_m}{(q {\rm e}^\ve;q)_m}  =
\frac{1}{(q;q)_m} \cdot
\frac{(q;q)_\infty}{(q {\rm e}^\ve;q)_\infty} \cdot
\frac{\big(q^{m+1} {\rm e}^\ve;q\big)_\infty}{(q {\rm e}^\ve;q)_\infty} .
\end{align*}
Finally,
\begin{align*}
\frac{\big(q^{m+1} {\rm e}^\ve;q\big)_\infty}{(q {\rm e}^\ve;q)_\infty} & =
\prod_{n >m} \frac{1-q^n {\rm e}^\ve}{1-q^n}
\end{align*}
which implies that
\begin{align*}
\log \biggl( \frac{\big(q^{m+1} {\rm e}^\ve;q\big)_\infty}{(q {\rm e}^\ve;q)_\infty} \biggr) &=
\sum_{n>m} \sum_{j \geq 1} \frac{1}{j} q^{nj} \big(1-{\rm e}^{j \ve}\big)
= - \sum_{\ell=1}^\infty \frac{\ve^\ell}{\ell!}
\sum_{j >0} j^{\ell-1} \sum_{n >m} q^{n j} .
\end{align*}
The result follows by summing the geometric series in $n>m$.
\end{proof}

Using the above method, we can sketch a proof of Proposition~\ref{prop.237}
which expresses the state integral of the $(-2,3,7)$ pretzel knot as a
sum of products of $q$-series and $\tq$-series.

\subsection[The twelve q-series for the (-2,3,7) pretzel knot]{The twelve $\boldsymbol{q}$-series for the $\boldsymbol{(-2,3,7)}$ pretzel knot}
\label{sub.q237}

In this whole section, let
\[
f(x)=\Phi_{b}(x)^2 \Phi_{b}(2x-c_b)
 {\rm e}^{-\pi {\rm i} (2x-c_b)^2}
\]
denote the integrand of~\eqref{Psi237}, which is a meromorphic function
with poles at $x_{m/2,n/2}$ for natural numbers $m,n \in \BN$. These poles are
of third order when $m$ and $n$ are even and of first order
otherwise.
First, we compute the contribution from the third order poles $x_{m,n}$.
Using $2 x_{m,n}- c_b = x_{2m,2n}$,~\eqref{eq.phimn},~\eqref{eq.expxmn}
and the modularity transformation of the eta function
\[
\frac{(q;q)_\infty}{(\tq;\tq)_\infty}={\rm e}^{\frac{\pi {\rm i}}{4}}
\left(\frac{\tq}{q}\right)^{\frac{1}{24}} b^{-1},
\]
it follows that
\begin{align*}
 f(x+x_{m,n}) & =  {\rm e}^{-\pi {\rm i} (2x+x_{2m,2n})^2}
 \Phi_b(x+x_{m,n})^2 \Phi_b(2x+x_{2m,2n})
\\
& =  {\rm e}^{\frac{5\pi {\rm i}}{4}} 
\frac{{\rm e}^{{\rm i} \frac{\ve\tv}{\pi}}}{b^3(1-{\rm e}^{-\tv})^2 \big(1-{\rm e}^{-2\tv}\big)} F(q,\ve)
\tilde F(\tq,\tv)
\end{align*}
and
\begin{align*}
& F(q,\ve) =  \frac{(q {\rm e}^\ve;q)_\infty^2 \big(q {\rm e}^{2\ve};q\big)_\infty}{(q;q)_\infty^3}
\sum_{m=0}^\infty
\frac{q^{m(2m+1)} {\rm e}^{\ve (4m+1)}}{
(q {\rm e}^\ve;q)^2_m \big(q {\rm e}^{2\ve};q\big)_{2m}}, \\
& \ti F(q,\ve) =
\frac{(q;q)_\infty^3}{(q {\rm e}^{-\ve};q)_\infty^2\big(q {\rm e}^{-2\ve};q\big)_\infty}
\sum_{n=0}^\infty
\frac{q^{n(n+1)} {\rm e}^{\ve (2n-1)}}{
(q {\rm e}^\ve;q)^2_n \big(q {\rm e}^{2\ve};q\big)_{2n}} .
\end{align*}
It follows that the third order poles contribute
\[
2 \pi {\rm i} {\rm e}^{\frac{5\pi {\rm i}}{4}} 
\mathrm{Res}_{x=0}
\biggl(
\frac{{\rm e}^{{\rm i} \frac{\ve\tv}{\pi}}}{b^3(1-{\rm e}^{-\tv})^2 \big(1-{\rm e}^{-2\tv}\big)} F(q,\ve)
\tilde F(\tq,\tv)
\biggr)
\]
to the state integral. Expanding out, it follows that the contribution
to the state integral is given by
\[
{\rm e}^{\frac{-\pi {\rm i}}{4}} 
\biggl(* \tau H^+_0(q) H^-_2(\tq) + * H^+_1(q) H^-_1(\tq)
+ \frac{*}{\tau} H^+_2(q) H^-_0(\tq) + \frac{*}{2\pi {\rm i}} H^+_0(q) H^-_0(\tq)
\biggr),
\]
where $*$ are easily computable rational numbers.

Next, we compute the contribution from the first order poles. Recall that
\[
f(x+x_{m/2,n/2})  =  {\rm e}^{-\pi {\rm i} (2x+x_{m,n})^2} \Phi_b(x+x_{m/2,n/2})^2
\Phi_b(2x+x_{m,n}).
\]
When $(m,n)$ are not both even,
${\rm e}^{-\pi {\rm i} (2x+x_{m,n})^2} \Phi_b(x+x_{m/2,n/2})^2$ is regular at $x=0$ and
$\Phi_b(2x+x_{m,n})$ has a first order pole at $x=0$. Note that
$x_{(2m+m')/2,(2n+n')/2} = x_{m,n}+ {\rm i} m' b/2 + {\rm i} n' b^{-1}/2$ for $(m',n')=(1,0)$,
$(0,1)$ and $(1,1)$. Equations~\eqref{eq.Phixmn} and~\eqref{eq.phicb}
together with the replacement of $x$ by ${\rm i} m' b/2 + {\rm i} n' b^{-1}/2$
imply that when $(m',n')=(1,0)$,  we have
\begin{gather*}
{\rm e}^{-\pi {\rm i} (x_{(2m+1)/2,2n/2})^2} \Phi_b\big(x_{(2m+m')/2,(2n+n')/2}\big)^2 \\
 \qquad {}=  - {\rm e}^{\frac{\pi {\rm i}}{2}} \biggl(\frac{q}{\tq}\biggr)^{\frac{1}{8}}
q^{(2m+1)(m+1)} \frac{\big(q^{3/2};q\big)^2_\infty}{\big(q^{3/2};q\big)^2_m}
\frac{\tq^{-n^2}}{(-1;\tq)^2_\infty(-\tq;\tq)^2_n}
\end{gather*}
and
\[
\Res_{x=0} \Phi_b(2x+x_{2m+1,2n}) = * {\rm e}^{\frac{3\pi {\rm i}}{4}}
\biggl(\frac{\tq}{q}\biggr)^{\frac{1}{24}} \frac{\tq^{n(2n+1)}}{(q;q)_{2m+1}
(\tq;\tq)_{2n}},
\]
where $*$ is a constant independent of $b$. Likewise, we can treat the case
of $(m',n')=(0,1)$ and~$(1,1)$. With the definition of the six $q$-series
(inside and outside the unit circle) given below, and whose first few
terms are given in~\eqref{eq.237H}, the above computation concludes the
proof of Proposition~\ref{prop.237}.

The series $H^+_k(q)$ and $H^-_k(q)$ for $|q| < 1$ and $k=0,1,2$ are
defined, respectively, by
\begin{gather*}
\frac{(q {\rm e}^\ve;q)_\infty^2 \big(q {\rm e}^{2\ve};q\big)_\infty}{(q;q)_\infty^3}
\sum_{m=0}^\infty
\frac{q^{m(2m+1)} {\rm e}^{\ve (4m+1)}}{
(q {\rm e}^\ve;q)^2_m \big(q {\rm e}^{2\ve};q\big)_{2m}}\\
\qquad{}
=  H^+_0(q) + \ve H^+_1(q) + \frac{\ve^2}{2}
\biggl(H^+_2(q)
+\frac{1}{3} \calE_2(q)H^+_0(q)\biggr)
+ O(\ve)^3 ,
\\
\frac{(q;q)_\infty^3}{(q {\rm e}^{-\ve};q)_\infty^2\big(q {\rm e}^{-2\ve};q\big)_\infty}
\sum_{n=0}^\infty
\frac{q^{n(n+1)} {\rm e}^{\ve (2n+1)}}{
(q {\rm e}^\ve;q)^2_n \big(q {\rm e}^{2\ve};q\big)_{2n}}\\
\qquad{}
=  H^-_0(q) + \ve H^-_1(q) + \frac{\ve^2}{2}
\biggl(H^-_2(q)
+\bigg(\frac{1}{2}-\frac{1}{3} \calE_2(q)\bigg) H^-_0(q)\biggr)
+ O(\ve)^3 ,
\end{gather*}
where
\[
H^+_j(q) = \sum_{m=0}^\infty t_m(q) p_m^{(j)}(q) , \qquad
H^-_j(q) = \sum_{m=0}^\infty T_m(q) P_m^{(j)}(q),
\qquad j=0, 1, 2
\]
with
\begin{gather*}
t_m(q)  =  \frac{q^{m(2m+1)}}{(q;q)^2_m (q;q)_{2m}},
\qquad
T_n(q)  =  \frac{q^{n(n+1)}}{(q;q)^2_n (q;q)_{2n}} , 
\end{gather*}
(not to be confused with~\eqref{tT52}) and
\begin{gather*}
  p_m^{(0)}(q) =1 , \qquad p_m^{(1)}(q) = 4m+1-2E_1^{(m)}(q)-2\calE_1^{(2m)}(q)
 , \\
p_m^{(2)}(q) = p_m^{(1)}(q)^2 - 2 \calE_2^{(m)}(q) - 4 \calE_2^{(2m)}(q)
- \frac{1}{3} \calE_2(q) , \\
  P_n^{(0)}(q) =1 , \qquad P_n^{(1)}(q) = 2n+1-2 \calE_1^{(n)}(q) -2 \calE_1^{(2n)}(q)
 , \\
 P_n^{(2)}(q) = P_n^{(1)}(q)^2 +12 \calE_2^{(0)}-\frac{1}{2}
- 2 \calE_2^{(n)}(q) - 4 \calE_2^{(2n)}(q) + \frac{1}{3} \calE_2(q) .
\end{gather*}
The remaining series $H^+_k(q)$ and $H^-_k(q)$ for $|q| < 1$ and $k=4,5,6$ are
defined, respectively, by
\begin{gather*}
H^+_3(q)    =  \frac{\bigl(q^{3/2};q\bigr)_\infty^2}{(q;q)_\infty^2}
\sum_{m=0}^\infty \frac{q^{(2m+1)(m+1)}}{\bigl(q^{3/2};q\bigr)_m^2 (q;q)_{2m+1}},
\\
  H^-_3(q)    =  \frac{(q;q)_\infty^2}{\bigl(q^{-1/2};q\bigr)_\infty^2}
\sum_{n=0}^\infty \frac{q^{n(n+2)}}{\bigl(q^{3/2};q\bigr)_n^2 (q;q)_{2n+1}},
\\
H^+_4(q)    =  \frac{(-q;q)_\infty^2}{(q;q)_\infty^2}
\sum_{m=0}^\infty \frac{q^{(2m+1)m}}{(-q;q)_m^2 (q;q)_{2m}},
\\
  H^-_4(q)    =  \frac{(q;q)_\infty^2}{(-1;q)_\infty^2}
\sum_{n=0}^\infty \frac{q^{n(n+1)}}{(-q;q)_n^2 (q;q)_{2n}},
\\
H^+_5(q)    =  \frac{\bigl(-q^{3/2};q\bigr)_\infty^2}{(q;q)_\infty^2}
\sum_{m=0}^\infty \frac{q^{(2m+1)(m+1)}}{\bigl(-q^{3/2};q\bigr)_m^2 (q;q)_{2m+1}},
\\
  H^-_5(q)    =  \frac{(q;q)_\infty^2}{\bigl(-q^{-1/2};q\bigr)_\infty^2}
\sum_{n=0}^\infty \frac{q^{n(n+2)}}{\bigl(-q^{3/2};q\bigr)_n^2 (q;q)_{2n+1}}.
\end{gather*}

Note that the above $q$-hypergeometric series are convergent
for $|q| \neq 1$ and satisfy the symmetries
\begin{alignat*}{4}
\label{eq.6sym1}
& H^{+}_0\big(q^{-1}\big)  = H^{-}_0(q),\qquad  && H^{+}_1\big(q^{-1}\big)  = -H^{-}_1(q),\qquad  && H^{+}_2\big(q^{-1}\big)
 = H^{-}_2(q), &
\\
& H^{+}_3\big(q^{-1}\big)  = -H^{-}_4(q),\qquad  && H^{+}_4\big(q^{-1}\big)  = H^{-}_4(q),\qquad &&  H^{+}_5\big(q^{-1}\big)
 = -H^{-}_5(q) . &
\end{alignat*}
\begin{Remark}
Despite appearances, $H^+_0(q) (q;q)_\infty^2$ \big(as well as
$H^-_0(q) (q;q)_\infty^2$ as the other 10 $q$-series\big)
is a rank 3 Nahm sum. Indeed, use
\[
(q;q)_\infty^2 T_m(q) = \frac{q^{m(2m+1)}}{(q;q)_m}
\big(q^{m+1};q\big)_\infty \big(q^{2m+1};q\big)_\infty
\]
together with the identity
\begin{gather}
\label{eq.qxinf}
(qx;q)_\infty  =
\sum_{k=0}^\infty (-1)^k \frac{q^{\frac{k(k+1)}{2}}x^k}{(q;q)_k}
\end{gather}
to obtain that
\[
(q;q)_\infty^2 H^+_0(q)  =  \sum_{k,l,m} (-1)^{k+l}
\frac{q^{\frac{1}{2}(k^2+l^2+4m^2+2km+4lm)+\frac{1}{2}(k+l+2m)}}{
(q;q)_k (q;q)_l (q;q)_m} .
\]
\end{Remark}

\subsection*{Acknowledgements}

The authors would like to thank Tudor Dimofte, Gie~Gu, Rinat Kashaev,
Marcos Mari\~no and Sander Zwegers for their input and the anonymous referees
for a careful reading of the manuscript.

\pdfbookmark[1]{References}{ref}
\LastPageEnding


\begin{thebibliography}{99}
\footnotesize\itemsep=0pt

\bibitem{237}
An N., Li Y., On the quantum modularity conjecture for the {$(-2,3,7)$}-pretzel
  knot, {i}n preparation.

\bibitem{AGK}
Andersen J.E., Garoufalidis S., Kashaev R., The volume conjecture for the {KLV}
  state-integral, in preparation.

\bibitem{AH}
Andersen J.E., Hansen S.K., Asymptotics of the quantum invariants for surgeries
  on the figure 8 knot, \href{https://doi.org/10.1142/S0218216506004555}{\textit{J.~Knot Theory Ramifications}} \textbf{15}
  (2006), 479--548, \href{https://arxiv.org/abs/math.QA/0506456}{arXiv:math.QA/0506456}.

\bibitem{AK}
Andersen J.E., Kashaev R., A {TQFT} from quantum {T}eichm\"uller theory,
  \href{https://doi.org/10.1007/s00220-014-2073-2}{\textit{Comm. Math. Phys.}} \textbf{330} (2014), 887--934, \href{https://arxiv.org/abs/1109.6295}{arXiv:1109.6295}.

\bibitem{Beem}
Beem C., Dimofte T., Pasquetti S., Holomorphic blocks in three dimensions,
  \href{https://doi.org/10.1007/JHEP12(2014)177}{\textit{J.~High Energy Phys.}} \textbf{2014} (2014), no.~12, 177, 119~pages,
  \href{https://arxiv.org/abs/1211.1986}{arXiv:1211.1986}.

\bibitem{BD}
Bettin S., Drappeau S., Modularity and value distribution of quantum invariants
  of hyperbolic knots, \href{https://doi.org/10.1007/s00208-021-02288-2}{\textit{Math. Ann.}} \textbf{382} (2022), 1631--1679,
  \href{https://arxiv.org/abs/1905.02045}{arXiv:1905.02045}.

\bibitem{Za:123}
Bruinier J.H., van~der Geer G., Harder G., Zagier D., The 1-2-3 of modular
  forms, \textit{Universitext}, \href{https://doi.org/10.1007/978-3-540-74119-0}{Springer}, Berlin, 2008.

\bibitem{CGZ}
Calegari F., Garoufalidis S., Zagier D., Bloch groups, algebraic {$K$}-theory,
  units, and {N}ahm's conjecture, \href{https://doi.org/10.24033/asens.2537}{\textit{Ann. Sci. \'Ec. Norm. Sup\'er.}}
  \textbf{56} (2023), 383--426, \href{https://arxiv.org/abs/1712.04887}{arXiv:1712.04887}.

\bibitem{Dimofte:complex}
Dimofte T., Complex {C}hern--{S}imons theory at level {$k$} via the 3d-3d
  correspondence, \href{https://doi.org/10.1007/s00220-015-2401-1}{\textit{Comm. Math. Phys.}} \textbf{339} (2015), 619--662,
  \href{https://arxiv.org/abs/1409.0857}{arXiv:1409.0857}.

\bibitem{Dimofte:perturbative}
Dimofte T., Perturbative and nonperturbative aspects of complex
  {C}hern--{S}imons theory, \href{https://doi.org/10.1088/1751-8121/aa6a5b}{\textit{J.~Phys.~A}} \textbf{50} (2017), 443009,
  25~pages, \href{https://arxiv.org/abs/1608.02961}{arXiv:1608.02961}.

\bibitem{DGG2}
Dimofte T., Gaiotto D., Gukov S., 3-manifolds and 3d indices, \href{https://doi.org/10.4310/ATMP.2013.v17.n5.a3}{\textit{Adv.
  Theor. Math. Phys.}} \textbf{17} (2013), 975--1076, \href{https://arxiv.org/abs/1112.5179}{arXiv:1112.5179}.

\bibitem{DGG1}
Dimofte T., Gaiotto D., Gukov S., Gauge theories labelled by three-manifolds,
  \href{https://doi.org/10.1007/s00220-013-1863-2}{\textit{Comm. Math. Phys.}} \textbf{325} (2014), 367--419, \href{https://arxiv.org/abs/1108.4389}{arXiv:1108.4389}.

\bibitem{DG}
Dimofte T., Garoufalidis S., The quantum content of the gluing equations,
  \href{https://doi.org/10.2140/gt.2013.17.1253}{\textit{Geom. Topol.}} \textbf{17} (2013), 1253--1315, \href{https://arxiv.org/abs/1202.6268}{arXiv:1202.6268}.

\bibitem{DG2}
Dimofte T., Garoufalidis S., Quantum modularity and complex {C}hern--{S}imons
  theory, \href{https://doi.org/10.4310/CNTP.2018.v12.n1.a1}{\textit{Commun. Number Theory Phys.}} \textbf{12} (2018), 1--52,
  \href{https://arxiv.org/abs/1511.05628}{arXiv:1511.05628}.

\bibitem{DGLZ}
Dimofte T., Gukov S., Lenells J., Zagier D., Exact results for perturbative
  {C}hern--{S}imons theory with complex gauge group, \href{https://doi.org/10.4310/CNTP.2009.v3.n2.a4}{\textit{Commun. Number
  Theory Phys.}} \textbf{3} (2009), 363--443, \href{https://arxiv.org/abs/0903.2472}{arXiv:0903.2472}.

\bibitem{Gukov:largeN}
Ekholm T., Gruen A., Gukov S., Kucharski P., Park S., Su{\l}kowski P.,
  {$\widehat{Z}$} at large {$N$}: from curve counts to quantum modularity,
  \href{https://doi.org/10.1007/s00220-022-04469-9}{\textit{Comm. Math. Phys.}} \textbf{396} (2022), 143--186,
  \href{https://arxiv.org/abs/2005.13349}{arXiv:2005.13349}.

\bibitem{Fa}
Faddeev L., Discrete {H}eisenberg--{W}eyl group and modular group,
  \href{https://doi.org/10.1007/BF01872779}{\textit{Lett. Math. Phys.}} \textbf{34} (1995), 249--254,
  \href{https://arxiv.org/abs/hep-th/9504111}{arXiv:hep-th/9504111}.

\bibitem{Ga:quasi}
Garoufalidis S., The degree of a {$q$}-holonomic sequence is a quadratic
  quasi-polynomial, \href{https://doi.org/10.37236/2000}{\textit{Electron. J.~Combin.}} \textbf{18} (2011), 4,
  23~pages, \href{https://arxiv.org/abs/1005.4580}{arXiv:1005.4580}.

\bibitem{Ga:slope}
Garoufalidis S., The {J}ones slopes of a knot, \href{https://doi.org/10.4171/QT/13}{\textit{Quantum Topol.}}
  \textbf{2} (2011), 43--69, \href{https://arxiv.org/abs/0911.3627}{arXiv:0911.3627}.

\bibitem{Ga:index}
Garoufalidis S., The 3{D} index of an ideal triangulation and angle structures
  (with an appendix by {S}ander {Z}wegers), \href{https://doi.org/10.1007/s11139-016-9771-7}{\textit{Ramanujan~J.}} \textbf{40}
  (2016), 573--604, \href{https://arxiv.org/abs/1208.1663}{arXiv:1208.1663}.

\bibitem{Ga:arbeit}
Garoufalidis S., Quantum knot invariants, \href{https://doi.org/10.1007/s40687-018-0127-3}{\textit{Res. Math. Sci.}} \textbf{5}
  (2018), 11, 17~pages, \href{https://arxiv.org/abs/1201.3314}{arXiv:1201.3314}.

\bibitem{GGM}
Garoufalidis S., Gu J., Mari\~no M., The resurgent structure of quantum knot
  invariants, \href{https://doi.org/10.1007/s00220-021-04076-0}{\textit{Comm. Math. Phys.}} \textbf{386} (2021), 469--493,
  \href{https://arxiv.org/abs/2007.10190}{arXiv:2007.10190}.

\bibitem{GGM:peacock}
Garoufalidis S., Gu J., Mari\~no M., Peacock patterns and resurgence in complex
  {C}hern--{S}imons theory, \href{https://doi.org/10.1007/s40687-023-00391-1}{\textit{Res. Math. Sci.}} \textbf{10} (2023), 29,
  67~pages, \href{https://arxiv.org/abs/2012.00062}{arXiv:2012.00062}.

\bibitem{GGMW:trivial}
Garoufalidis S., Gu J., Mari\~no M., Wheeler C., Resurgence of
  {C}hern--{S}imons theory at the trivial flat connection, \href{https://arxiv.org/abs/2111.04763}{arXiv:2111.04763}.

\bibitem{GHRS}
Garoufalidis S., Hodgson C.D., Rubinstein J.H., Segerman H., 1-efficient
  triangulations and the index of a~cusped hyperbolic 3-manifold, \href{https://doi.org/10.2140/gt.2015.19.2619}{\textit{Geom.
  Topol.}} \textbf{19} (2015), 2619--2689, \href{https://arxiv.org/abs/1303.5278}{arXiv:1303.5278}.

\bibitem{GK:evaluation}
Garoufalidis S., Kashaev R., Evaluation of state integrals at rational points,
  \href{https://doi.org/10.4310/CNTP.2015.v9.n3.a3}{\textit{Commun. Number Theory Phys.}} \textbf{9} (2015), 549--582,
  \href{https://arxiv.org/abs/1411.6062}{arXiv:1411.6062}.

\bibitem{GK:qseries}
Garoufalidis S., Kashaev R., From state integrals to {$q$}-series,
  \href{https://doi.org/10.4310/MRL.2017.v24.n3.a8}{\textit{Math. Res. Lett.}} \textbf{24} (2017), 781--801, \href{https://arxiv.org/abs/1304.2705}{arXiv:1304.2705}.

\bibitem{GKZ:modular}
Garoufalidis S., Kashaev R., Zagier D., A modular quantum dilogarithm and
  invariants of {$3$}-manifolds, {i}n preparation.

\bibitem{GW:modular}
Garoufalidis S., Wheeler C., Modular {$q$}-holonomic modules,
  \href{https://arxiv.org/abs/2203.17029}{arXiv:2203.17029}.

\bibitem{GW:asy3D}
Garoufalidis S., Wheeler C., Periods, the meromorphic {3D}-index and the
  {T}uraev--{V}iro invariant, \href{https://arxiv.org/abs/2209.02843}{arXiv:2209.02843}.

\bibitem{GZ:asymptotics}
Garoufalidis S., Zagier D., Asymptotics of {N}ahm sums at roots of unity,
  \href{https://doi.org/10.1007/s11139-020-00266-x}{\textit{Ramanujan~J.}} \textbf{55} (2021), 219--238, \href{https://arxiv.org/abs/1812.07690}{arXiv:1812.07690}.

\bibitem{GZ:kashaev}
Garoufalidis S., Zagier D., Knots, perturbative series and quantum modularity,
  \href{https://arxiv.org/abs/2111.06645}{arXiv:2111.06645}.

\bibitem{GZ:optimal}
Garoufalidis S., Zagier D., Asymptotics of factorially divergent series,
  {P}reprint, 2022.

\bibitem{GZ:gamma}
Golyshev V.V., Zagier D., Proof of the gamma conjecture for {F}ano 3-folds with
  a~{P}icard lattice of rank one, \href{https://doi.org/10.1070/IM8343}{\textit{Izv. Math.}} \textbf{80} (2016),
  24--49.

\bibitem{Gukov-Manolescu}
Gukov S., Manolescu C., A two-variable series for knot complements,
  \href{https://doi.org/10.4171/qt/145}{\textit{Quantum Topol.}} \textbf{12} (2021), 1--109, \href{https://arxiv.org/abs/1904.06057}{arXiv:1904.06057}.

\bibitem{Gukov:resurgence}
Gukov S., Mari\~no M., Putrov P., Resurgence in complex {C}hern--{S}imons
  theory, \href{https://arxiv.org/abs/1605.07615}{arXiv:1605.07615}.

\bibitem{Gukov:BPS}
Gukov S., Pei D., Putrov P., Vafa C., B{PS} spectra and 3-manifold invariants,
  \href{https://doi.org/10.1142/S0218216520400039}{\textit{J.~Knot Theory Ramifications}} \textbf{29} (2020), 2040003, 85~pages,
  \href{https://arxiv.org/abs/1701.06567}{arXiv:1701.06567}.

\bibitem{Hi}
Hikami K., Hyperbolic structure arising from a knot invariant,
  \href{https://doi.org/10.1142/S0217751X0100444X}{\textit{Internat.~J. Modern Phys.~A}} \textbf{16} (2001), 3309--3333,
  \href{https://arxiv.org/abs/math-ph/0105039}{arXiv:math-ph/0105039}.

\bibitem{Jones}
Jones V.F.R., Hecke algebra representations of braid groups and link
  polynomials, \href{https://doi.org/10.2307/1971403}{\textit{Ann. of Math.}} \textbf{126} (1987), 335--388.

\bibitem{KLV}
Kashaev R., Luo F., Vartanov G., A {TQFT} of {T}uraev--{V}iro type on shaped
  triangulations, \href{https://doi.org/10.1007/s00023-015-0427-8}{\textit{Ann. Henri Poincar\'e}} \textbf{17} (2016),
  1109--1143, \href{https://arxiv.org/abs/1210.8393}{arXiv:1210.8393}.

\bibitem{K95}
Kashaev R.M., A link invariant from quantum dilogarithm, \href{https://doi.org/10.1142/S0217732395001526}{\textit{Modern Phys.
  Lett.~A}} \textbf{10} (1995), 1409--1418, \href{https://arxiv.org/abs/q-alg/9504020}{arXiv:q-alg/9504020}.

\bibitem{Kashaev:star}
Kashaev R.M., Mangazeev V.V., Stroganov Yu.G., Star-square and tetrahedron
  equations in the {B}axter--{B}azhanov model, \href{https://doi.org/10.1142/S0217751X93000588}{\textit{Internat.~J. Modern
  Phys.~A}} \textbf{8} (1993), 1399--1409.

\bibitem{MaximTalks}
Kontsevich M., Talks on resurgence, July 20, 2020 and August 21, 2020.

\bibitem{KS:stability}
Kontsevich M., Soibelman Y., Motivic {D}onaldson--{T}homas invariants: summary
  of results, in Mirror Symmetry and Tropical Geometry, \textit{Contemp.
  Math.}, Vol. 527, \href{https://doi.org/10.1090/conm/527/10400}{American Mathematical Society}, Providence, RI, 2010,
  55--89, \href{https://arxiv.org/abs/0811.2435}{arXiv:0811.2435}.

\bibitem{KS}
Kontsevich M., Soibelman Y., Cohomological {H}all algebra, exponential {H}odge
  structures and motivic {D}onaldson--{T}homas invariants, \href{https://doi.org/10.4310/CNTP.2011.v5.n2.a1}{\textit{Commun.
  Number Theory Phys.}} \textbf{5} (2011), 231--352, \href{https://arxiv.org/abs/1006.2706}{arXiv:1006.2706}.

\bibitem{KS:wall-crossing}
Kontsevich M., Soibelman Y., Wall-crossing structures in {D}onaldson--{T}homas
  invariants, integrable systems and mirror symmetry, in Homological Mirror
  Symmetry and Tropical Geometry, \textit{Lect. Notes Unione Mat. Ital.},
  Vol.~15, \href{https://doi.org/10.1007/978-3-319-06514-4_6}{Springer}, Cham, 2014, 197--308, \href{https://arxiv.org/abs/1303.3253}{arXiv:1303.3253}.

\bibitem{KS:ar}
Kontsevich M., Soibelman Y., Analyticity and resurgence in wall-crossing
  formulas, \href{https://doi.org/10.1007/s11005-022-01529-y}{\textit{Lett. Math. Phys.}} \textbf{112} (2022), 2, 56~pages,
  \href{https://arxiv.org/abs/2005.10651}{arXiv:2005.10651}.

\bibitem{MM}
Murakami H., Murakami J., The colored {J}ones polynomials and the simplicial
  volume of a~knot, \href{https://doi.org/10.1007/BF02392716}{\textit{Acta Math.}} \textbf{186} (2001), 85--104,
  \href{https://arxiv.org/abs/math.GT/9905075}{arXiv:math.GT/9905075}.

\bibitem{Rademacher}
Rademacher H., Zur {T}heorie der {D}edekindschen {S}ummen, \href{https://doi.org/10.1007/BF01187951}{\textit{Math.~Z.}}
  \textbf{63} (1956), 445--463.

\bibitem{Tu1}
Turaev V.G., The {Y}ang--{B}axter equation and invariants of links,
  \href{https://doi.org/10.1007/BF01393746}{\textit{Invent. Math.}} \textbf{92} (1988), 527--553.

\bibitem{Wheeler:thesis}
Wheeler C., Modular {$q$}-difference equations and quantum invariants of
  hyperbolic three-manifolds, Ph.D.~Thesis, {U}niversity of Bonn, 2023,
  available at \url{https://nbn-resolving.org/urn:nbn:de:hbz:5-70573}.

\bibitem{Wheeler:41}
Wheeler C., Quantum modularity for a closed hyperbolic 3-manifold,
  \href{https://arxiv.org/abs/2308.03265}{arXiv:2308.03265}.

\bibitem{Za:DL}
Zagier D., The dilogarithm function, in Frontiers in Number Theory, Physics,
  and Geometry.~{II}, \href{https://doi.org/10.1007/978-3-540-30308-4_1}{Springer}, Berlin, 2007, 3--65.

\bibitem{Za:QMF}
Zagier D., Quantum modular forms, in Quanta of Maths, \textit{Clay Math.
  Proc.}, Vol.~11, American Mathematical Society, Providence, RI, 2010,
  659--675.

\bibitem{Za:hirzebruch}
Zagier D., Life and work of {F}riedrich {H}irzebruch, \href{https://doi.org/10.1365/s13291-015-0114-1}{\textit{Jahresber. Dtsch.
  Math.-Ver.}} \textbf{117} (2015), 93--132.

\end{thebibliography}
\end{document}